\newcommand{\mybox}{%
    \collectbox{%
        \setlength{\fboxsep}{1pt}%
        \fbox{\BOXCONTENT}%
    }%
}
\let\emptyset\varnothing
\numberwithin{equation}{section}
\newtheorem{thm}{Theorem}[section]
\newtheorem*{mthm*}{Main Theorem}
\newtheorem{prop}[thm]{Proposition}
\newtheorem{lem}[thm]{Lemma}
\newtheorem{cor}[thm]{Corollary}
\theoremstyle{definition} 
\newtheorem{dfn}[thm]{Definition}
\theoremstyle{remark}
\newcommand{\beq}{\begin{equation}}
\newcommand{\eeq}{\end{equation}}
\newcommand{\be}{\begin{equation*}}
\newcommand{\ee}{\end{equation*}}
\newcommand{\bC}{\mathbb{C}}
\newcommand{\bZ}{\mathbb{Z}}
\newcommand{\bN}{\mathbb{N}}
\newcommand{\mc}{\mathcal}
\newcommand{\sfe}{\mathsf{e}}
\newcommand{\gl}{\mathfrak{gl}}
\newcommand{\so}{\mathfrak{so}}
\newcommand{\fko}{\mathfrak{o}}
\newcommand{\rY}{\mathrm{Y}}
\newcommand{\rU}{\mathrm{U}}
\newcommand{\End}{\mathrm{End}}
\newcommand{\sdet}{{\mathrm{sdet}}}   
\newcommand{\gr}{{\mathrm{gr}}}   
\newcommand{\Uq}{{\mathrm{U}_q}}  
\newcommand{\hgl}{{\mathscr{L}\mathfrak{gl}}} 
\newcommand{\hsl}{{\mathscr{L}\mathfrak{sl}}}
\newcommand{\bB}{{\mathbf B}}
\newcommand{\bTH}{{\acute{\mathbf \Theta}}}
\newcommand{\BTH}{{\mathbf \Theta}}
\newcommand{\tl}{\tilde}
\newcommand{\wtl}{\widetilde}
\newcommand{\gge}{\geqslant}
\newcommand{\lle}{\leqslant}
\newcommand{\Y}{{\mathscr{Y}}}
\newcommand{\E}{{\mathcal{E}}}
\newcommand{\F}{{\mathcal{F}}}
\newcommand{\SY}{{\mathrm{SY}_q^{\rm tw}}}
\newcommand{\bt}{{\bar{t}}}
\newcommand{\bT}{{\overline{T}}}
\newcommand{\tY}{{\mathrm{Y}_q^{\rm tw}}}
\newcommand{\Ui}{{\mathbf U^\imath}}
\newcommand{\red}[1]{{\color{red}#1}}
\begin{document}
\pagestyle{myheadings}
\setcounter{page}{1}

\title[Isomorphism between twisted $q$-Yangians and affine $\imath$quantum groups]{Isomorphism between twisted $q$-Yangians and affine $\imath$quantum groups: type AI}

\author{Kang Lu}
 \address{Department of Mathematics, University of Virginia, 
Charlottesville, VA 22903, USA}\email{kang.lu@virginia.edu}

\subjclass[2020]{Primary 17B37.}
\keywords{Drinfeld presentation, quantum symmetric pairs}
	
\begin{abstract}
By employing Gauss decomposition, we establish a direct and explicit isomorphism between the twisted $q$-Yangians (in R-matrix presentation) and affine $\imath$quantum groups (in current presentation) associated to symmetric pair of type AI introduced by Molev-Ragoucy-Sorba and Lu-Wang, respectively. As a corollary, we obtain a PBW type basis for affine $\imath$quantum groups of type AI.
\end{abstract}
	
\maketitle
% \setcounter{tocdepth}{1}
% \tableofcontents

\thispagestyle{empty}
\section{Introduction}
The quantum affine algebras are well-known to have three different presentations: the Drinfeld-Jimbo presentation \cite{Dr85,Jim85}, the current (or Drinfeld's new) presentation \cite{Dr88}, and the R-matrix presentation (at least for classical types) \cite{FRT88}. Each presentation offers distinct advantages, making it crucial to establish explicit isomorphisms between them. These isomorphisms facilitate connections between various realizations and allow us to leverage the advantages from different perspectives. The isomorphism between Drinfeld-Jimbo and Drinfeld's presentations has been established in \cite{Be94, Da12, Da15}, while the isomorphism between Drinfeld's and R-matrix presentations has been demonstrated in \cite{DF93} for type A and, more recently, for types BCD in \cite{JLM20a, JLM20b}.

Recently, significant attention has been devoted to quantum symmetric pairs and the associated $\imath$quantum groups \cite{Let99, Kol14, BK20, LW21, LRW23}, with a focus on finite and Kac-Moody types. Quantum symmetric pairs $(\mathbf{U}, \mathbf{U}^\imath)$ were initially introduced in \cite{Let99} for finite types and later extended to Kac-Moody types in \cite{Kol14}. Here, $\mathbf{U}$ represents the corresponding Drinfeld-Jimbo quantum group, and $\mathbf{U}^\imath$ denotes a specific coideal subalgebra of $\mathbf{U}$ referred to as an $\imath$quantum group. The idea of generalizing all fundamental (algebraic, geometric, categorical) constructions in quantum groups to $\imath$quantum groups was proposed in \cite{BW18}. Therefore, completing an $\imath$-analog of the previous paragraph becomes both natural and essential. In this paper, we focus on the symmetric pair $(\gl_n, \so_n)$ of type AI (or split type A) which is the only case (for higher rank) where all three presentations are available in the existing literature.

The twisted $q$-Yangians of types AI and AII were introduced in \cite{MRS03} in R-matrix form whose defining relations are given by a twisted reflection equation \cite{Sk88}. These twisted $q$-Yangians represent $q$-deformations of the twisted loop algebras. The classification of finite-dimensional irreducible representations of twisted $q$-Yangians for type AII can be found in \cite{GM10}, while the classification problems for types AI and AIII (introduced in \cite{CGM14}) are still open. In \cite{Kol14}, the affine $\imath$quantum groups of types AI and AII (Serre-type presentation) were identified with the twisted $q$-Yangians in R-matrix presentation. In \cite{LW21}, a Drinfeld type presentation of the affine $\imath$quantum group of type AI was introduced and an isomorphism between the Drinfeld-type and the Serre-type presentations are demonstrated. Explicit isomorphisms are provided in both cases. The objective of this paper is to complete this picture by providing an explicit isomorphism between the current and R-matrix presentations.

Our approach relies on the well-known Gauss decomposition, which has been successfully applied to establish isomorphisms between R-matrix and Drinfeld's (current) presentations for quantum affine algebras and Yangians of classical types \cite{DF93, BK05, JLM18, JLM20a, JLM20b}. As confirmed in \cite{CG15}, the twisted $q$-Yangian degenerates to twisted Yangians \cite{Ol92} in their R-matrix presentations. Therefore, the recent joint work with Weiqiang Wang and Weinan Zhang on Gauss decomposition of twisted Yangians (of type AI) in R-matrix presentation and degeneration of affine $\imath$quantum groups in Drinfeld's presentation \cite{LWZ23a, LWZ23b,LZ24} has been instrumental in guiding this research.

While we expect the Gauss decomposition to be applicable to twisted $q$-Yangians of other classical types, it is undeniably more challenging. It should be noted that R-matrix presentations for affine $\imath$quantum groups are currently available only for types AI, AII, and AIII \cite{MRS03, CGM14}. An interesting future direction would involve investigating the R-matrix construction for classical types \cite{RV16}, as explored in \cite{GR16} for twisted Yangians.

Our results are anticipated to provide applications to the study of open spin chains \cite{Sk88} of XXZ type. For instance, they hold the potential to facilitate the computation of weight functions (off-shell Bethe vectors) for quantum integrable models with boundary conditions (associated to twisted $q$-Yangians) via the method of projection of Drinfeld currents as developed in \cite{EKP07}. Consequently, our results pave the way for the advancement of the zero modes method, as introduced by \cite{LPRS19}, in the investigation of open spin chains. This advancement encompasses tasks such as the calculation of scalar products of states and the determination of form-factors for local operators.

\medskip

The organization of this paper is as follows: Section \ref{sec:yangian} provides a recap of the definition and fundamental properties of quantum loop algebras and twisted $q$-Yangians. In Section \ref{sec:Gauss}, we apply the Gauss decomposition to introduce new generators of twisted $q$-Yangians and investigate their basic properties. Section \ref{sec:main} is dedicated to presenting the main theorem and its applications, including a PBW theorem. Finally, in Section \ref{sec:low}, we verify the relations of twisted $q$-Yangians of ranks 1 and 2. 

\medskip 

{\bf Acknowledgement}. The author thanks Weiqiang Wang and Weinan Zhang for collaborations and stimulating discussions. The author is partially supported by NSF grants DMS-2001351 and DMS--2401351, both awarded to Weiqiang Wang.

% {\bf Data Availability} Data sharing not applicable to this article as no datasets were generated or analyzed during
% the current study.

% \textbf{Declarations}

% \textbf{Conflict of interests} There are no conflict of interests to the author.

\section{Twisted $q$-Yangians of type AI}\label{sec:yangian}
In this section, we recall the basics about quantum loop algebras and twisted $q$-Yangians, following \cite{MRS03}. Throughout the paper, we assume that $q$ is a formal variable and for each $k\in\bZ_{\gge 0}$, we use the standard notation
\[
[k]=\frac{q^k-q^{-k}}{q-q^{-1}}.
\]
For $x,y$ in a $\bC(q)$-algebra, we shall denote $[x,y]_{q^a}=xy-q^ayx$, and $[x,y]=xy-yx$.

\subsection{Trigonometric R-matrices}
Let $n$ be a fixed positive integer. Define the trigonometric R-matrix depending on two parameters,
\beq\label{Ru}
\begin{split}
    R(u,v)= (u-v)\sum_{i\ne j}E_{ii}\otimes E_{jj}&+(q^{-1}u-qv)\sum_{1\lle i\lle n}E_{ii}\otimes E_{ii}\\
    &+(q^{-1}-q)u\sum_{i>j}E_{ij}\otimes E_{ji}+(q^{-1}-q)v\sum_{i<j}E_{ij}\otimes E_{ji},
\end{split}
\eeq
where $E_{ij}\in \End(\bC^n)$ are the standard matrix units. It would be convenient to work with one-parameter R-matrices,
\beq\label{bRx}
\begin{split}
    \overline R(x)= \frac{R(x,1)}{q^{-1}x-1}=&\, \sum_{1\lle i\lle n}E_{ii}\otimes E_{ii}+\frac{1-x}{q-q^{-1}x}\sum_{i\ne j}E_{ii}\otimes E_{jj}\\
    &+\frac{(q^{-1}-q)x}{q-q^{-1}x}\sum_{i>j}E_{ij}\otimes E_{ji}+\frac{q^{-1}-q}{q-q^{-1}x}\sum_{i<j}E_{ij}\otimes E_{ji},
\end{split}
\eeq
and
\beq\label{Rx}
R(x)=f(x)\overline R(x),
\eeq
where
\[
f(x)=1+\sum_{k=1}^\infty f_kx^k,\qquad f_k=f_k(q),
\]
is a formal power series in $x$ whose coefficients $f_k$ are rational functions in $q$ and $f(x)$ is uniquely determined by the relation
\[
f(xq^{2n})=f(x)\frac{(1-xq^2)(1-xq^{2n-2})}{(1-x)(1-xq^{2n})}.
\]

Let $R^t(u,v):=R^{t_1}(u,v)$ be obtained from $R(u,v)$ by taking the transpose in the first factors,
\beq\label{Rt}
\begin{split}
    R^t(u,v)= (u-v)\sum_{i\ne j}E_{ii}\otimes E_{jj}&+(q^{-1}u-qv)\sum_{1\lle i\lle n}E_{ii}\otimes E_{ii}\\
    &+(q^{-1}-q)u\sum_{i>j}E_{ji}\otimes E_{ji}+(q^{-1}-q)v\sum_{i<j}E_{ji}\otimes E_{ji},
\end{split}
\eeq
and define similar notations for other R-matrices. Denote $D$ the diagonal $n\times n$ matrix,
\beq\label{D}
D=\mathrm{diag}(q^{n-1},q^{n-2},\cdots,q,1).
\eeq

We collect some basic properties of the trigonometric R-matrix. The R-matrix satisfies the \textit{Yang-Baxter equation},
\beq\label{ybeq}
R_{12}(u,v)R_{13}(u,w)R_{23}(v,w)=R_{23}(v,w)R_{13}(u,w)R_{12}(u,v),
\eeq
the \textit{unitary property},
\beq\label{unitary}
\overline R(x^{-1})=\overline R_{21}(x)^{-1},
\eeq
and the \textit{crossing symmetry relation} \cite{FR92},
\beq\label{cross}
R^{t_1}(xq^{2n})D_1D_2 \big(R(x)^{-1}\big)^{t_1}=D_1D_2.
\eeq
Moreover, $R(u,v)$ and $R^t(u^{-1},v)$ commute,
\beq\label{RRtcom}
R(u,v)R^t(u^{-1},v)=R^t(u^{-1},v)R(u,v).
\eeq

\subsection{Quantum loop algebras}
\label{subsec:loop}
\begin{dfn}
The \textit{quantum loop algebra} $\Uq(\hgl_n)$ corresponding to the Lie algebra $\gl_n$ is a unital associative $\bC(q)$-algebra with generators $t_{ij}^{(r)}$ and $\bar t_{ij}^{(r)}$, where $1\lle i,j\lle n$ and $r\in\bZ_{\gge 0}$, and the defining relations given as follows. Set the generating series
\[
t_{ij}(u)=  \sum_{r\gge 0}t_{ij}^{(r)}u^{-r},\qquad \bar t_{ij}(u)=\sum_{r\gge 0}\bar t_{ij}^{(r)}u^{r}
\]
and combine them into matrices
\[
T(u)=\sum_{i,j=1}^n t_{ij}(u)\otimes E_{ij},\qquad \overline{T}(u)=\sum_{i,j=1}^n \bar t_{ij}(u)\otimes E_{ij}.
\]
Then the defining relations of $\Uq(\hgl_n)$ are
\begin{align}
    t_{ij}^{(0)}&=\bt_{ji}^{(0)}=0,\qquad 1\lle i<j\lle n,\nonumber\\
    t_{ii}^{(0)}\bt_{ii}^{(0)}&=\bt_{ii}^{(0)}t_{ii}^{(0)}=1,\qquad 1\lle i\lle n,\label{zerocartan}\\
    R(u,v)T_1(u)T_2(v)&=T_2(v)T_1(u)R(u,v),\nonumber\\
    R(u,v)\overline T_1(u)\overline T_2(v)&=\overline T_2(v)\overline T_1(u)R(u,v),\nonumber\\
    R(u,v)\overline T_1(u)T_2(v)&=T_2(v)\overline T_1(u)R(u,v)\nonumber.
\end{align}
Here both sides of RTT relations are series with coefficients in $\Uq(\hgl_n)\otimes \End(\bC^n)\otimes \End(\bC^n)$ and the subscripts of $T(u)$ and $\overline T(u)$ indicate the copies of $\End(\bC^n)$ where $T(u)$ or $\overline T(u)$ acts; e.g. $T_1(u)=T(u)\otimes 1$.
\end{dfn}

Let $\gl_n[z^{\pm 1}]$ be the loop algebra $\gl_n\otimes\bC[z^{\pm 1}]$ in an indeterminate $z$. The quantum loop algebra $\Uq(\hgl_n)$ is a deformation of the universal enveloping algebra $\rU(\gl_n[z^{\pm 1}])$ in the sense that $\Uq(\hgl_n)$ specializes to $\rU(\gl_n[z^{\pm 1}])$ as $q\to 1$. Specifically, regard $q$ as a formal variable and $\rU(\gl_n[z^{\pm 1}])$ as an algebra over $\bC(q)$. Set $\mc A=\bC[q^{\pm 1}]$. Let $\rU_{\mc A}$ be the $\mc A$-subalgebra of $\Uq(\hgl_n)$ generated by the elements $\tau_{ij}^{(r)}$ and $\bar\tau_{ij}^{(r)}$ defined by
\beq\label{tau}
\tau_{ij}^{(r)}=\frac{t_{ij}^{(r)}}{q-q^{-1}},\qquad \bar\tau_{ij}^{(r)}=\frac{\bt_{ij}^{(r)}}{q-q^{-1}},
\eeq
for $r\gge 0$ and all $1\lle i,j\lle n$, except for the case $r=0$ and $i=j$ where we set
\beq\label{tau0}
\tau_{ii}^{(0)}=\frac{t_{ii}^{(0)}-1}{q-1},\qquad \bar \tau_{ii}^{(0)}=\frac{\bt_{ii}^{(0)}-1}{q-1}.
\eeq
Then there exists an isomorphism
\beq\label{classical}
\rU_{\mc A}\otimes_{\mc A}\bC\cong \rU(\gl_n[z^{\pm 1}]),
\eeq
with the action of $\mc A$ on $\bC$ defined via the evaluation $q = 1$, see e.g. \cite[\S 12.2]{CP95} and \cite[\S 2]{FM02}. Moreover, the images of the generators of $\rU_{\mc A}$ under the isomorphism \eqref{classical} are given by
\beq\label{imglimit}
\tau_{ij}^{(r)}\to \sfe_{ij}\otimes z^r,\qquad \bar \tau_{ij}^{(r)}\to -\sfe_{ij}\otimes z^{-r}
\eeq
for all $r\gge 0$ with the exception $\tau_{ij}^{(0)}=\bar\tau_{ji}^{(0)}=0$ if $1\lle i<j\lle n$. Here $\sfe_{ij}$, $1\lle i,j\lle n$, are the standard elements of the Lie algebra $\gl_n$.

For a subalgebra $V$ of $\Uq(\hgl_n)$, we set $V_{\mc A}=V\cap \rU_{\mc A}$. We say that $V$ specializes to a subalgebra $V^{\circ}$ of $\rU(\gl_n[z^{\pm1}])$ if $V_{\mc A}\otimes_{\mc A}\bC\cong V^\circ$.

We shall need the negative half $\rU_q^{<}(\hsl_n)$ of $\rU(\hsl_n)$ in terms of Drinfeld's current presentation. The $\bC(q)$-algebra $\rU_q^{<}(\hsl_n)$ is generated by $\xi_{i,r}$ where $1\lle i<n$ and $r\in\bZ$ subject to the following relations:
\begin{align*}
&[\xi_{i,k},\xi_{j,l}]=0,\hskip5.7cm \text{ if }c_{ij}=0,\\
&[\xi_{i,k},\xi_{j,l+1}]_{q^{-c_{ij}}}-q^{-c_{ij}}[\xi_{i,k+1},\xi_{j,l}]_{q^{c_{ij}}}=0,\\
&\xi_{i,k_1}\xi_{i,k_2}\xi_{j,l}-[2]\xi_{i,k_1}\xi_{j,l}\xi_{i,k_2}+\xi_{j,l}\xi_{i,k_1}\xi_{i,k_2}+\{k_1\leftrightarrow k_2\}=0,
\end{align*}
see e.g. \cite{Lu93} and \cite[Theorem 2]{Her05}. Here and below, $\{k_1\leftrightarrow k_2\}$ stands for repeating the previous summands with $k_1,k_2$ interchanged, so the sum over $k_1,k_2$ are symmetrized.

For any $1\lle i<j\lle n$, define elements $\xi_{ji,r}$ in $\rU_q^{<}(\hsl_n)$ inductively by the rule:
\beq\label{bijr-A}
\xi_{i+1\, i,r}=\xi_{i,r},\qquad \xi_{ji,r}=[\xi_{j-1,0},\xi_{j-1\, i,r}]_q,
\eeq
if $j-i\gge 2$. 

Define a total order on the elements $\xi_{ji,r}$ by the rule:
\beq\label{total}
\xi_{ji,r}\prec \xi_{j'i',r'} \text{ iff } (i'<i) \text{ or } (i'=i, j'< j) \text{ or } (i'=i, j'=j, r'<r).
\eeq
The order is opposite to the one in \cite[\S2.2]{Tsy21} due to \cite[(2.15)]{Tsy21}.
\begin{prop}[{\cite[Theorem 2.16]{Tsy21}}]\label{PBW-A}
The ordered monomials in the elements $\xi_{ji,r}$ form a $\bC(q)$-basis of $\rU_q^{<}(\hsl_n)$.
\end{prop}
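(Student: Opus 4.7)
The plan is to prove the two required properties — spanning and linear independence of the ordered monomials in the $\xi_{ji,r}$ — by combining an inductive reordering argument with a classical-limit argument.

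For the spanning claim, I would argue by induction on the order $\prec$ that every monomial in the original generators $\xi_{i,r}$ can be rewritten as a $\bC(q)$-linear combination of ordered monomials in the $\xi_{ji,r}$. The key input is that \eqref{bijr-A} expresses each $\xi_{ji,r}$ as a nested $q$-commutator of simple Drinfeld generators, so the defining relations of $\rU_q^{<}(\hsl_n)$ (the $q$-commutation relations and the quantum Serre relations) allow one to move $\xi_{ji,r}$ past $\xi_{j'i',r'}$ when $\xi_{ji,r}\prec\xi_{j'i',r'}$, modulo terms that are strictly lower in a suitable filtration — for instance, by total loop degree together with height of root. Iterating the reordering terminates because the filtration is bounded below in each fixed bidegree.

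For linear independence, I would pass to the classical limit $q\to 1$. The algebra $\rU_q^{<}(\hsl_n)$ carries an integral form over $\mc A=\bC[q^{\pm 1}]$ analogous to the one in \eqref{tau}--\eqref{classical}, whose specialization at $q=1$ is the universal enveloping algebra of the loop algebra $\fkn^-\otimes\bC[z^{\pm 1}]$, where $\fkn^-$ denotes the strictly lower triangular subalgebra of $\fksl_n$. Under this specialization, the recursive formula \eqref{bijr-A} sends $\xi_{ji,r}$ (up to a nonzero scalar) to the root vector $\sfe_{ji}\otimes z^r$, as in \eqref{imglimit}. The classical PBW theorem for $\rU(\fkn^-\otimes \bC[z^{\pm 1}])$ yields linear independence of the corresponding ordered monomials in the $\sfe_{ji}\otimes z^r$, and the standard lifting lemma transports this back to $\bC(q)$-linear independence of the ordered monomials in $\xi_{ji,r}$.

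The main obstacle is the bookkeeping in the spanning step: expanding the nested $q$-brackets in \eqref{bijr-A} produces many terms, and one must verify that every such term lies in the span of ordered monomials of strictly lower order in the chosen filtration. This requires a careful choice of filtration, systematic use of Jacobi-type identities for $q$-commutators, and invocation of the higher-order quantum Serre relations implied by the defining ones. Once the filtration is fixed, the inductive step ought to reduce to a rank-two verification, where the $A_2$ Drinfeld relations can be applied directly.
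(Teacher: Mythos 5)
The paper does not prove this statement at all: it is quoted verbatim from \cite[Theorem 2.16]{Tsy21}, where the proof goes through a shuffle-algebra realization of $\rU_q^{<}(\hsl_n)$. So your proposal is necessarily a different route, and it has to be judged on its own merits.

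The spanning half of your argument is the standard straightening procedure and is morally fine, although as you concede it is only a plan; the reduction to rank-two checks is exactly what a complete proof must carry out. The genuine gap is in the linear-independence half. Since $\rU_q^{<}(\hsl_n)$ is defined here \emph{abstractly} by generators and relations, the ``standard lifting lemma'' you invoke is not available: what one gets for free is only a surjection from the algebra presented by the $q=1$ relations onto the specialization $U_{\mc A}\otimes_{\mc A}\bC$ of your integral form, and separately a surjection from that same presented algebra onto $\rU(\fkn^-\otimes\bC[z^{\pm1}])$. Neither of these produces the map $U_{\mc A}\otimes_{\mc A}\bC\twoheadrightarrow \rU(\fkn^-\otimes\bC[z^{\pm1}])$ that your argument needs in order to transport a putative $\bC(q)$-linear dependence to a contradiction with the classical PBW theorem. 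Asserting that the specialization \emph{is} $\rU(\fkn^-\otimes\bC[z^{\pm1}])$ is a flatness statement that is essentially equivalent to the PBW theorem you are trying to prove, so the argument is circular as written. What is genuinely missing is a \emph{lower} bound on the graded dimensions of $\rU_q^{<}(\hsl_n)$, which must come from an external faithful model: an embedding into the shuffle algebra (Tsymbaliuk's route), an identification of the abstractly presented negative half with a subalgebra of $\Uq(\hsl_n)$ in its Drinfeld--Jimbo form (Beck/Damiani plus Lusztig's PBW), or an explicit faithful representation. Note that when the present paper runs a specialization argument of the kind you propose (Proposition \ref{PBWgauss}), it can do so only because the isomorphism \eqref{spec} is already guaranteed by the independently established Proposition \ref{prop:PBW}; no such input is available for the abstractly presented $\rU_q^{<}(\hsl_n)$.
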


\subsection{Twisted $q$-Yangians}
In this subsection, we recall the definition of twisted $q$-Yangians from \cite[\S 3]{MRS03}.

\begin{dfn}\label{tydef}
The \textit{twisted $q$-Yangian} $\tY(\fko_n)$ is the $\bC(q)$-subalgebra of $\Uq(\hgl_n)$ generated by the coefficients $s_{ij}^{(r)}$ of the entries of the matrix $S(u)=T(u)\bT(u^{-1})^t$. Namely, let
\beq\label{quater u}
s_{ij}(u)=\sum_{r\gge 0}s_{ij}^{(r)}u^{-r}:=\sum_{a=1}^n t_{ia}(u)\bt_{ja}(u^{-1}).
\eeq
The subalgebra $\tY(\fko_n)$ is generated by the elements $s_{ij}^{(r)}$ for $1\lle i,j\lle n$ and $r\in \bZ_{\gge 0}$.
\end{dfn}

The twisted $q$-Yangian $\tY(\fko_n)$ also admits an R-matrix presentation as follows. 

\begin{dfn}
The \textit{twisted $q$-Yangian} $\Y_q^{\rm tw}(\fko_n)$ is a unital associative $\bC(q)$-algebra generated by $s_{ij}^{(r)}$ with $1\lle i,j\lle n$ and $r\in\bZ_{\gge 0}$ and the defining relations are given by
\begin{align}
s_{ij}^{(0)}&=0,\qquad 1\lle i<j\lle n,\label{ij0}\\
s_{ii}^{(0)}&=1,\qquad 1\lle i\lle n,\label{ii0}\\
R(u,v)S_1(u)R^{t}(u^{-1},v)S_2(v)&=S_2(v)R^{t}(u^{-1},v)S_1(u)R(u,v).\label{quat-mat}
\end{align}
where $R^{t}(u,v)$ is defined in \eqref{Rt} and
\[
s_{ij}(u)=\sum_{r\gge 0}s_{ij}^{(r)}u^{-r},\qquad S(u)=\sum_{i,j=1}^n s_{ij}(u)\otimes E_{ij}.
\]
\end{dfn}

It is known from \cite[Theorem 3.3]{MRS03} that the map
\[
\tY(\fko_n)\to \Y_q^{\rm tw}(\fko_n),\quad s_{ij}^{(r)}\mapsto s_{ij}^{(r)}
\]
defines an isomorphism of algebras. Thus we shall write $\tY(\fko_n)$ for $\Y_q^{\rm tw}(\fko_n)$.

In terms of generating series, the defining relations can be written as,
\beq\label{siju}
\begin{split}
(q^{-\delta_{ij}}u&-q^{\delta_{ij}}v)\alpha_{ijab}(u,v)+(q^{-1}-q)(u\delta_{j<i}+v\delta_{i<j})\alpha_{jiab}(u,v)\\
&=(q^{-\delta_{ab}}u-q^{\delta_{ab}}v)\alpha_{jiba}(v,u)+(q^{-1}-q)(u\delta_{a<b}+v\delta_{b<a})\alpha_{jiab}(v,u),
\end{split}
\eeq
where the notation $\alpha_{ijab}(u)$ is defined by
\beq\label{alphau}
\begin{split}
\alpha_{ijab}(u)=(q^{-\delta_{aj}}-q^{\delta_{aj}}uv)s_{ia}(u)s_{jb}(v)+(q^{-1}-q)(\delta_{j<a}+uv\delta_{a<j})s_{ij}(u)s_{ab}(v).
\end{split}
\eeq

If $1\lle i<j<n$, then by \eqref{siju} we have
\beq\label{eq:zero-mode-s}
\begin{split}
[s_{ij}(u),s_{j+1,j}^{(0)}]_q=\big(q^{-1}-q\big)s_{i,j+1}(u),\\
[s_{ji}(u),s_{j+1,j}^{(0)}]_q=\big(q^{-1}-q\big)s_{j+1,i}(u).   
\end{split}
\eeq

\subsection{Some basic properties}
Let $\theta$ be the involution of $\gl_n$ defined by
\[
\theta: \gl_n\longrightarrow \gl_n,\quad e_{ij}\mapsto -e_{ji}.
\]
Extend this involution to an involution on the loop algebra $\gl_n[z^{\pm1}]$ (denoted again by $\theta$) sending 
\begin{align*}
    \mathtt g\otimes z^r \mapsto \theta(\mathtt g)\otimes z^{-r} \qquad \qquad (\mathtt g\in \gl_n, \; r\in\bZ). 
\end{align*}
Denote by $\gl_n[z^{\pm 1}]^\theta$ the fixed point subalgebra of $\gl_n[z^{\pm 1}]$ under the involution $\theta$. Set $V=\tY(\fko_n)$. Then the algebra $V_{\mc A}$ is generated by the elements 
\beq\label{sigmaij}
\sigma_{ij}^{(r)}=\frac{s_{ij}^{(r)}}{q-q^{-1}},
\eeq
where we assume further that $i>j$ if $r=0$, cf. \eqref{ij0} and \eqref{ii0}. Then the subalgebra $\tY(\fko_n)$ specializes to $\rU(\gl_n[z^{\pm 1}]^\theta)$, i.e.
\beq\label{spec}
V_{\mc A}\otimes_{\mc A}\bC\cong \rU(\gl_n[z^{\pm 1}]^\theta).
\eeq
Moreover, under this isomorphism, the image of $\sigma_{ij}^{(r)}$ in $V_{\mc A}\otimes_{\mc A}\bC$ is identified with
\beq\label{sigmaimg}
\sigma_{ij}^{(r)}\mapsto \sfe_{ij}\otimes z^r-\sfe_{ji}\otimes z^{-r}.
\eeq

In particular, we have the well-known PBW type theorem. Define an ordering on the generators such that $s_{ij}^{(r)}\preceq s_{kl}^{(p)}$ if and only if $(i,j,r)\preceq(k,l,p)$ in the lexicographical order.
\begin{prop}[{\cite[Corollary 3.4]{MRS03}}]\label{prop:PBW}
The ordered monomials in the elements
\begin{align*}
&s_{ij}^{(0)},\qquad 1\lle j<i\lle n,\\
&s_{ij}^{(r)},\qquad 1\lle i,j\lle n,\, r\in\bZ_{>0}
\end{align*}
form a basis of the twisted $q$-Yangians $\tY(\fko_n)$.
\end{prop}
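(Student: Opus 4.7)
The plan is to prove the PBW theorem by combining a spanning argument inside $\tY(\fko_n)$ with a linear independence argument obtained via the classical limit furnished by \eqref{spec}-\eqref{sigmaimg}.

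First, I would show that the stated ordered monomials span $\tY(\fko_n)$. The defining relation \eqref{quat-mat}, spelled out on generating series in \eqref{siju}, yields relations of the schematic shape
\[
s_{ij}^{(r)}\, s_{kl}^{(p)} \;=\; c_{ij,kl}\, s_{kl}^{(p)}\, s_{ij}^{(r)} \;+\; \text{lower terms},
\]
where $c_{ij,kl}\in\bC(q)$ is an exchange scalar and the ``lower terms'' are linear combinations of quadratic monomials $s_{i'j'}^{(r')} s_{k'l'}^{(p')}$ whose total $r'+p'$ is strictly smaller than $r+p$, together with linear terms coming from the zero modes (as is apparent from identities of the form \eqref{eq:zero-mode-s}). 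Equipping $\tY(\fko_n)$ with the filtration by total degree in the upper indices and using induction on this degree together with the induced monomial order \eqref{total}-style on the lexicographic indices $(i,j,r)$, an arbitrary monomial in the generators can be rewritten as a $\bC(q)$-linear combination of monomials ordered as prescribed.

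Second, I would establish linear independence via specialization. Passing to the renormalized generators $\sigma_{ij}^{(r)}$ of \eqref{sigmaij}, the ordered monomials lie in the $\mc A$-form $V_{\mc A}$ up to rational factors in $q-q^{-1}$. Given a putative nontrivial $\bC(q)$-linear relation among the ordered monomials, one clears denominators and common factors of $q-q^{-1}$ so as to obtain a relation in $V_{\mc A}$ whose reduction modulo $(q-1)$ is nonzero. Under the isomorphism \eqref{spec} this reduction becomes a linear relation among the corresponding ordered monomials in the generators $\sfe_{ij}\otimes z^r - \sfe_{ji}\otimes z^{-r}$ of $\rU(\gl_n[z^{\pm 1}]^\theta)$ via \eqref{sigmaimg}. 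But these monomials, being indexed exactly by the PBW data for the twisted loop Lie algebra $\gl_n[z^{\pm 1}]^\theta$, are linearly independent in $\rU(\gl_n[z^{\pm 1}]^\theta)$ by the classical PBW theorem, giving the contradiction.

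The main obstacle is the spanning step: the defining relation \eqref{siju} is quadratic in the $s_{ij}(u)$ on both sides, with the generating-series coefficients mixing terms of several different bidegrees, so extracting a clean ordering procedure requires careful bookkeeping of leading versus correction terms, especially when the indices collide ($i=j$, $a=b$, or $j=a$) and the zero-mode relations \eqref{ij0}-\eqref{ii0} produce additional linear contributions. Once the reordering is organized with respect to a two-step filtration (first by total degree in $u$, then by the lexicographic order on index pairs), the induction closes and linear independence follows cleanly from the classical limit together with the PBW theorem for $\gl_n[z^{\pm 1}]^\theta$.
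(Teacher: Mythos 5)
Your two-step argument---spanning by straightening ordered monomials using the reflection-equation relations, and linear independence via the $q\to 1$ specialization \eqref{spec}--\eqref{sigmaimg} to $\rU(\gl_n[z^{\pm 1}]^\theta)$ together with the classical PBW theorem---is precisely the proof of \cite[Corollary 3.4]{MRS03}, which the paper cites without reproving and whose independence half is exactly the specialization machinery set up just before the proposition. One caveat on your spanning step: because of the $uv$ factors in \eqref{alphau}, the coefficient of $u^{-r}v^{-p}$ in \eqref{siju} mixes quadratic monomials of total degrees $r+p+1$ and $r+p+3$, so the correction terms are not all of strictly smaller total degree as you schematically assert; the induction still closes, but one must work with the filtration by $\deg s_{ij}^{(r)}=r$ and handle the several top-degree monomials appearing in each order by a further triangularity argument, as in \cite{MRS03}.
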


Let $\wtl S(u)=S(u)^{-1}=(\tl s_{ij}(u))_{1\lle i,j\lle n}$. 
% In terms of generating series, we have
% \beq\label{sts}
% \begin{split}
% (u^2-v^2)[s_{ij}(u),\tl s_{kl}(v)]= &\, (u+v)\Big(\delta_{jk}\sum_{a=1}^N s_{ia}(u)\tl s_{al}(v)-\delta_{il}\sum_{a=1}^N \tl s_{ka}(v) s_{aj}(u)\Big)\\
%  +&\, (u-v)\Big(\delta_{jl}\sum_{a=1}^N \tl s_{ka}(v) s_{ia}(u)-\delta_{ik}\sum_{a=1}^N s_{aj}(u) \tl s_{al}(v)\Big)\\
%  &\hskip 0.8cm +\Big(\delta_{ik}\sum_{a=1}^N s_{ja}(u)\tl s_{al}(v)-\delta_{jl}\sum_{a=1}^N \tl s_{ka}(v) s_{ai}(u)\Big).
% \end{split}
% \eeq
% We have the following immediate consequence of \eqref{sts}.

\begin{lem}\label{stslem}
If $\{i,j\}\cap \{k,l\}=\emptyset$, then $[s_{ij}(u),\tl s_{kl}(v)]=0$.
\end{lem}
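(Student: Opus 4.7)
The plan is to first derive a dual form of the reflection equation \eqref{quat-mat} that isolates $\wtl S_2(v)$. Multiplying both sides of \eqref{quat-mat} on the left and on the right by $\wtl S_2(v) = S_2(v)^{-1}$ and using $S_2(v)\wtl S_2(v) = \wtl S_2(v) S_2(v) = I$ gives
\beq\label{plan:dualRE}
\wtl S_2(v) R(u,v) S_1(u) R^t(u^{-1},v) = R^t(u^{-1},v) S_1(u) R(u,v) \wtl S_2(v).
\eeq

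Next I would extract the matrix entry at position $((i,k),(j,l))$ from \eqref{plan:dualRE}. Using $(\wtl S_2)_{(a,b),(c,d)} = \delta_{ac}\wtl s_{bd}(v)$, this takes the form
\be
\sum_{b} \wtl s_{kb}(v)\,\bigl[R S_1 R^t\bigr]_{(i,b),(j,l)} \;=\; \sum_{b} \bigl[R^t S_1 R\bigr]_{(i,k),(j,b)}\,\wtl s_{bl}(v),
\ee
and the task is then to evaluate the inner matrix entries using the explicit block structure of $R(u,v)$ (diagonal terms $E_{aa}\otimes E_{bb}$ plus swap terms $E_{ab}\otimes E_{ba}$) and of $R^t(u^{-1},v)$ (diagonal plus doubled-diagonal terms $E_{ab}\otimes E_{ab}$). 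The assumption $\{i,j\}\cap\{k,l\}=\emptyset$ forces $i\ne k$, $i\ne l$, $j\ne k$, $j\ne l$; a case inspection then shows that every potential swap-times-doubled-diagonal contribution to $[R S_1 R^t]_{(i,b),(j,l)}$ would require an additional coincidence such as $j=l$ or $i=k$ that is excluded, so both sums collapse to a single term: $[R S_1 R^t]_{(i,l),(j,l)} = (u-v)(u^{-1}-v)\,s_{ij}(u)$ and $[R^t S_1 R]_{(i,k),(j,k)} = (u-v)(u^{-1}-v)\,s_{ij}(u)$.

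The identity thus reduces to $(u-v)(u^{-1}-v)\bigl[\wtl s_{kl}(v)\,s_{ij}(u) - s_{ij}(u)\,\wtl s_{kl}(v)\bigr] = 0$, and since multiplication by the Laurent polynomial $(u-v)(u^{-1}-v) = 1-(u+u^{-1})v + v^2$ is injective on $\tY(\fko_n)[[u^{-1},v^{-1}]]$, cancellation yields $[s_{ij}(u),\wtl s_{kl}(v)]=0$. I expect the main technical obstacle to be the bookkeeping in the case analysis, especially the boundary cases $i=j$ or $k=l$ in which some of the diagonal $R$-matrix coefficients become $q^{-1}u-qv$ rather than $u-v$, and checking carefully that swap-type entries of $R$ paired with $R^t$ always force a collision incompatible with the disjointness hypothesis so that no stray $s_{ji}(u)$ or $s_{kj}(u)$ terms appear to spoil the cancellation.
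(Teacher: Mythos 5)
Your proposal is correct and follows essentially the same route as the paper: the paper likewise multiplies the reflection equation on both sides by $\wtl S_2(v)$ to obtain the dual relation $\wtl S_2(v) R(u,v)S_1(u)R^t(u^{-1},v)=R^t(u^{-1},v)S_1(u)R(u,v)\wtl S_2(v)$ and then compares the coefficients of $E_{ij}\otimes E_{kl}$, exactly the entry extraction you describe. Your worry about the boundary cases $i=j$ or $k=l$ is harmless, since the only $R$- and $R^t$-entries that survive involve the index pairs $(i,l)$, $(j,l)$, $(i,k)$, $(j,k)$, all of which consist of distinct indices by hypothesis, so the coefficients are always of the $(u-v)$ type.
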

\begin{proof}
By \eqref{quat-mat}, we have
\[
\wtl S_2(v) R(u,v)S_1(u)R^t(u^{-1},v)=R^t(u^{-1},v)S_1(u)R(u,v)\wtl S_2(v).
\]
The lemma follows from the above equation by comparing the coefficients of $E_{ij}\otimes E_{kl}$.
\end{proof}

By the defining relations in terms of generating series $s_{ij}(u)$ from \eqref{siju} and \eqref{alphau}, given any $m\in \bN$, we have the natural homomorphism $\jmath$ from $\tY(\fko_{n})$ to $\tY(\fko_{m+n})$ given by
\begin{align}  \label{jmath}
\jmath: \tY(\fko_{n}) \longrightarrow \tY(\fko_{m+n}),\quad s_{ij}(u)\mapsto s_{ij}(u).
\end{align}
There is also a homomorphism $\iota_m$ from $\tY(\fko_{n})$ to $\tY(\fko_{m+n})$ given by shifting indices,
\begin{align} \label{iota}
\iota_m:\tY(\fko_{n})\longrightarrow \tY(\fko_{m+n}),\quad s_{ij}(u)\mapsto s_{m+i,m+j}(u).
\end{align}

Note that the notations $R(x)$, $S(u)$ etc. depend on the parameter $q$. One can also define these notations corresponding to the parameter $q^{-1}$. To distinguish the notations for $q$ and for $q^{-1}$. We shall write $\mathscr R(x)$, $\mathscr S(u)$ etc. for $q^{-1}$. In particular, $D\mathscr D=\mathbb{I}_n$ is the identity matrix.

\begin{prop}
The map
\beq\label{varpi}
\varpi_{q,n}: \tY(\fko_{n})\to \rY_{q^{-1}}^{\rm tw}(\fko_n), \qquad S(u)\to D\wtl{\mathscr S}(uq^n)D^{-1}
\eeq
defines an algebra isomorphism, where $\wtl{\mathscr S}(u)=\mathscr S(u)^{-1}$. Moreover, $\varpi_{q^{-1},n}\circ\varpi_{q,n}=\mathrm{Id}$.
\end{prop}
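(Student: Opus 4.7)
The plan is to first check that $X(u) := D\,\wtl{\mathscr S}(uq^n)\,D^{-1}$ satisfies the reflection equation \eqref{quat-mat} inside $\rY_{q^{-1}}^{\rm tw}(\fko_n)$ with the R-matrices at parameter $q$; this will show that $\varpi_{q,n}$ is a well-defined algebra homomorphism. Bijectivity will then follow from the identity $\varpi_{q^{-1},n}\circ\varpi_{q,n}=\mathrm{Id}$ (and its analogue with $q$ and $q^{-1}$ swapped).

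For the relation verification, I would start from the defining reflection equation of $\mathscr S(u)$ in $\rY_{q^{-1}}^{\rm tw}(\fko_n)$, namely
\[
\mathscr R(u,v)\,\mathscr S_1(u)\,\mathscr R^{t}(u^{-1},v)\,\mathscr S_2(v) = \mathscr S_2(v)\,\mathscr R^{t}(u^{-1},v)\,\mathscr S_1(u)\,\mathscr R(u,v),
\]
and proceed in two stages. First, multiply both sides appropriately by $\wtl{\mathscr S}_1(u)$ and $\wtl{\mathscr S}_2(v)$ and rearrange, yielding an equivalent relation for $\wtl{\mathscr S}$ in which the R-matrices occur as inverses. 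Second, apply the crossing symmetry \eqref{cross} in each of the two tensor slots, converting these inverses into non-transposed $\mathscr R$-matrices at appropriately shifted arguments and conjugated by $D_1D_2$. Using that $\mathscr R$ at parameter $q^{-1}$ is related to $R$ at parameter $q$ by a flip of the two tensor slots (combined with a swap of arguments and a sign that cancels on both sides), and invoking the unitary property \eqref{unitary}, the commutativity \eqref{RRtcom}, and the Yang--Baxter equation \eqref{ybeq}, one finds that after the substitution $u\to uq^n$, $v\to vq^n$ and conjugation by $D_1D_2$, the transformed identity becomes exactly the reflection equation \eqref{quat-mat} for $X(u)$ at parameter $q$.

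For the inverse identity, a direct computation gives
\[
\varpi_{q^{-1},n}\!\left(D\,\wtl{\mathscr S}(uq^n)\,D^{-1}\right) = D\,\varpi_{q^{-1},n}\!\left(\wtl{\mathscr S}(uq^n)\right)D^{-1} = D\,\mathscr D\,S(u)\,\mathscr D^{-1}\,D^{-1} = S(u),
\]
since $\varpi_{q^{-1},n}(\mathscr S(v))=\mathscr D\,\wtl S(vq^{-n})\,\mathscr D^{-1}$ implies, by taking inverses, $\varpi_{q^{-1},n}(\wtl{\mathscr S}(v))=\mathscr D\,S(vq^{-n})\,\mathscr D^{-1}$, and the substitution $v\mapsto u q^n$ together with the identity $D\mathscr D=\mathbb{I}_n$ yields the claim. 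This gives $\varpi_{q^{-1},n}\circ\varpi_{q,n}=\mathrm{Id}$, and the symmetric identity follows by the same argument with $q$ and $q^{-1}$ interchanged.

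The main obstacle is the careful bookkeeping in the first stage: the reflection equation has a delicate interplay between $R$ and $R^{t}$, and applying crossing symmetry in each tensor slot produces a cascade of transposes, parameter shifts, and conjugations that must be tracked precisely. Conceptually, the crossing symmetry \eqref{cross} realizes an involutive duality on the R-matrix exchanging $q\leftrightarrow q^{-1}$, and the twisted reflection equation is preserved under this duality exactly through the conjugation $D(\cdot)D^{-1}$ and the shift $u\mapsto uq^n$; this is the structural reason the isomorphism $\varpi_{q,n}$ exists.
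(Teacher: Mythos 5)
Your proposal is correct and follows essentially the same route as the paper: invert the reflection equation for $\mathscr S$, use the crossing symmetry \eqref{cross} to rewrite $\big(\mathscr R^{t_1}\big)^{-1}$ as $D_1^{-1}D_2^{-1}R^{t_1}D_1D_2$, use that $\mathscr R(u,v)$ is proportional to $R(u,v)^{-1}$ (via the flip/unitarity relation you describe), conjugate by $D_1D_2$, and verify $\varpi_{q^{-1},n}\circ\varpi_{q,n}=\mathrm{Id}$ exactly as you compute. The only cosmetic differences are that the paper applies crossing symmetry once (to the $t_1$-factor) rather than ``in each tensor slot,'' and it does not need \eqref{ybeq} or \eqref{RRtcom}.
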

\begin{proof}
By definition, we have
\beq\label{pfauo}
\mathscr R(u,v)\mathscr S_1(uq^n)\mathscr R^{t_1}(u^{-1}q^{-n},vq^n)\mathscr S_2(vq^n)=\mathscr S_2(vq^n)\mathscr R^{t_1}(u^{-1}q^{-n},vq^n)\mathscr S_1(uq^n)\mathscr R(u,v),
\eeq
where $\mathscr R(u,v)$ (resp. $\mathscr R(x)$) is equal to $R(u,v)$ (resp. $R(x)$)with $q$ replaced by $q^{-1}$. Note that the matrices $\mathscr R(u,v)$ and $R(u,v)^{-1}$ are proportional (up to a rational function in $u,v,q$). Also, it follows from \eqref{cross} that
\begin{align*}
\big(\mathscr R^{t_1}(u^{-1}v^{-1}q^{-2n})\big)^{-1}&\propto \Big(\big(R(u^{-1}v^{-1}q^{-2n})^{-1}\big)^{t_1}\Big)^{-1}\\
&=D_1^{-1}D_2^{-1}R^{t_1}(u^{-1}v^{-1})D_1D_2,
\end{align*}
where $\propto$ stands for ``proportional to up to a scalar series".
Thus we have
\[
\big(\mathscr R^{t_1}(u^{-1}q^{-n},vq^n)\big)^{-1}\propto D_1^{-1}D_2^{-1}R^{t_1}(u^{-1},v)D_1D_2.
\]
Taking the inverse of equation \eqref{pfauo} and using the above observations, we have
\begin{align*}
\wtl{\mathscr S}_2(vq^n)D_1^{-1}D_2^{-1}&R^{t_1}(u^{-1},v)D_1D_2\wtl{\mathscr S}_1(uq^n)R(u,v)\\
=\,&R(u,v)\wtl{\mathscr S}_1(uq^n)D_1^{-1}D_2^{-1}R^{t_1}(u^{-1},v)D_1D_2\wtl{\mathscr S}_2(vq^n).	
\end{align*}
Multiply $D_1D_2$ from left and $D_1^{-1}D_2^{-1}$ from right. Using the equality
\[
D_1D_2R(u,v)=R(u,v)D_1D_2,\qquad D_1^{-1}D_2^{-1}R(u,v)=R(u,v)D_1^{-1}D_2^{-1},
\]
we conclude that
\begin{align*}
D_2\wtl{\mathscr S}_2(vq^n)D_2^{-1}&R^{t_1}(u^{-1},v)D_1\wtl{\mathscr S}_1(uq^n)D_1^{-1}R(u,v)\\
=\,&R(u,v)D_1\wtl{\mathscr S}_1(uq^n)D_1^{-1}R^{t_1}(u^{-1},v)D_2\wtl{\mathscr S}_2(vq^n)D_2^{-1},
\end{align*}
which implies the map $\varpi_{q,n}$ is an algebra homomorphism. Note that
\[
\varpi_{q,n}:\wtl S(u)\to D \mathscr S(uq^n)D^{-1},
\]
we find that
\begin{align*}
&\varpi_{q^{-1},n}\circ\varpi_{q,n}(\wtl S(u))= \varpi_{q^{-1},n}(D \mathscr S(uq^n)D^{-1})\\=\,&D\varpi_{q^{-1},n}(\mathscr S(uq^n))D^{-1}
=D\mathscr D \wtl S((uq^{n})q^{-n})\mathscr D^{-1}D^{-1}=\wtl S(u).  
\end{align*}
Thus $\varpi_{q^{-1},n}\circ\varpi_{q,n}=\mathrm{Id}$. Similarly, $\varpi_{q,n}\circ\varpi_{q^{-1},n}=\mathrm{Id}$. Therefore, $\varpi_{q,n}$ is an isomorphism.
\end{proof}

\section{Gauss decomposition}\label{sec:Gauss}

In this section, we introduce Gauss decomposition for the twisted $q$-Yangian $\tY(\fko_N)$ and formulate several basic properties of the new current generators.

\subsection{Quasi-determinants and Gauss decomposition}
Let $X$ be a square matrix over a ring with identity such that its inverse matrix $X^{-1}$ exists, and such that its $(j,i)$-th entry is an invertible element of the ring.  Then the $(i,j)$-th \emph{quasi-determinant} of $X$ is defined by the first formula below and denoted graphically by the boxed notation (cf. \cite[\S1.10]{Mol07}):
\begin{equation*}
\vert X\vert _{ij} \stackrel{\text{def}}{=} \left((X^{-1})_{ji}\right)^{-1} = \left\vert  \begin{array}{ccccc} x_{11} & \cdots & x_{1j} & \cdots & x_{1n}\\
&\cdots & & \cdots&\\
x_{i1} &\cdots &\boxed{x_{ij}} & \cdots & x_{in}\\
& \cdots& &\cdots & \\
x_{n1} & \cdots & x_{nj}& \cdots & x_{nn}
\end{array} \right\vert .
\end{equation*}

By \cite[Theorem 4.96]{GGRW:2005}, the matrix $S(u)$ has the following Gauss decomposition:
$$
S(u) = F(u) D(u) E(u)
$$
for unique matrices of the form
\[
F(u)=\begin{bmatrix}
1&0&\dots&0~\\
f_{21}(u)&1&\dots&0~\\
\vdots&\vdots&\ddots&\vdots~\\
f_{n1}(u)&f_{n2}(u)&\dots&1~
\end{bmatrix},
\qquad
E(u)=\begin{bmatrix}
~1&e_{12}(u)&\dots&e_{1n}(u)\\
~0&1&\dots&e_{2n}(u)\\
~\vdots&\vdots&\ddots&\vdots\\
~0&0&\dots&1
\end{bmatrix},
\]
and $D(u)=\mathrm{diag}\big[d_1(u),\dots,d_{n}(u)\big]$, where the matrix entries are defined in terms of quasi-determinants:
\begin{eqnarray*}
d_i (u) &=& \left|\begin{matrix} s_{11}(u) &\cdots &s_{1,i-1}(u) &s_{1i}(u) \\
\vdots &\ddots & &\vdots \\
s_{i1}(u) &\cdots &s_{i,i-1}(u) &\boxed{s_{ii}(u)}
\end{matrix}\right| , 
\qquad \tl d_i(u)=d_i(u)^{-1}, 
\\
e_{ij}(u) &=&\tl d_i (u) \left|\begin{matrix} s_{11}(u) &\cdots & s_{1,i-1}(u) & s_{1j}(u) \\
\vdots &\ddots &\vdots & \vdots \\
s_{i-1,1}(u) &\cdots &s_{i-1,i-1}(u) & s_{i-1,j}(u)\\
s_{i1}(u) &\cdots &s_{i,i-1}(u) &\boxed{s_{ij}(u)}
\end{matrix} \right\vert,
\\
f_{ji}(u) &=& \left|\begin{matrix} s_{11}(u) &\cdots &s_{1, i-1}(u) & s_{1i}(u) \\
\vdots &\ddots &\vdots &\vdots \\
s_{i-1,1}(u) &\cdots &s_{i-1,i-1}(u) &s_{i-1,i}(u)\\
s_{j1}(u) &\cdots &s_{j, i-1}(u) &\boxed{s_{ji}(u)} 
\end{matrix} \right\vert\, 
\tl d_{i}(u).
\end{eqnarray*}
The Gauss decomposition can also be written component-wise as, for $i<j$, 
\begin{align}
s_{ii}(u)&=d_i(u)+\sum_{k<i}f_{ik}(u)d_k(u)e_{ki}(u),\nonumber\\
s_{ij}(u)&=d_i(u)e_{ij}(u)+\sum_{k<i}f_{ik}(u)d_k(u)e_{kj}(u),\label{eq:sij-Gauss}\\
s_{ji}(u)&=f_{ji}(u)d_i(u)+\sum_{k<i}f_{jk}(u)d_k(u)e_{ki}(u).\nonumber
\end{align}

We further denote
\begin{align}
    \label{gauss-gen}
e_{ij}(u) &=\sum_{r\gge 1}e_{ij}^{(r)}u^{-r},\qquad\hskip0.3cm f_{ji}(u)=\sum_{r\gge 0}f_{ji}^{(r)}u^{-r},\qquad\hskip0.3cm d_k(u)=1+\sum_{r\gge 1}d_{k}^{(r)}u^{-r},
\\
e_i(u) &=\sum_{r\gge 1}e_{i}^{(r)}u^{-r}=e_{i,i+1}(u),\quad f_i(u)=\sum_{r\gge 0}f_{i}^{(r)}u^{-r}=f_{i+1,i}(u),\quad 1\lle i<n.
\end{align}

Set 
\beq\label{edfinv}
\begin{split}
&\wtl D(u)=D(u)^{-1}=\sum_{1\lle i\lle n} E_{ii}\otimes \tl d_{i}(u),
\\
\wtl E(u)=E(u)^{-1}=\sum_{1\lle i<j\lle n}&E_{ij}\otimes \tl e_{ij}(u),\qquad \wtl F(u)=F(u)^{-1}=\sum_{1\lle i<j\lle n}E_{ji}\otimes \tl f_{ji}(u).
\end{split}
\eeq
Then we have 
\beq\label{eq:def-tilde-e-f}
\begin{split}
&\tl e_{ij}(u)=\sum_{i=i_0<i_1<\cdots<i_s=j}(-1)^s e_{i_0i_1}(u)e_{i_1i_2}(u)\cdots e_{i_{s-1}i_s}(u),\\
&\tl f_{ji}(u)=\sum_{i=i_0<i_1<\cdots<i_s=j}(-1)^s f_{i_{s}i_{s-1}}(u)\cdots f_{i_2i_1}(u) f_{i_1i_0}(u).
\end{split}
\eeq
Multiplying out the product $T(u)^{-1}=E(u)^{-1}D(u)^{-1}F(u)^{-1}$, we have
\beq\label{sij-Gauss-Inv}
\begin{split}
\tl s_{ii}(u)&=\tl d_i(u)+\sum_{k>i}\tl e_{ik}(u)\tl d_k(u)\tl f_{ki}(u),\\
\tl s_{ij}(u)&=\tl e_{ij}(u)\tl d_j(u)+\sum_{k>j}\tl e_{ik}(u)\tl d_k(u)\tl f_{kj}(u),\\
\tl s_{ji}(u)&=\tl d_j(u)\tl f_{ji}(u)+\sum_{k>j}\tl e_{jk}(u)\tl d_k(u)\tl f_{ki}(u).
\end{split}
\eeq

\subsection{Properties of new current generators}
Recall $\iota$ and $\varpi_{q,n}$ from \eqref{iota} and \eqref{varpi}. Define a homomorphism $\vartheta_m:\tY(\fko_n) \to \tY(\fko_{m+n})$ 
as the composition
\beq\label{eq:theta-m}
\vartheta_m= \varpi_{q^{-1},m+n}\circ \iota_m\circ \varpi_{q,n} \colon \tY(\fko_n)\longrightarrow \tY(\fko_{m+n}).
\eeq
\begin{lem}\label{lem:theta}
For any $1\lle i,j\lle n$, $\vartheta_m$ maps $s_{ij}(uq^m)$ to
\[
\left\vert  \begin{array}{cccc} s_{11}(u) &\cdots &s_{1m}(u) &s_{1, m+j}(u)\\
\vdots &\ddots &\vdots &\vdots \\
s_{m1}(u) &\cdots &s_{mm}(u) &s_{m, m+j}(u)\\
s_{m+i, 1}(u) &\cdots &s_{m+i,m}(u) &\mybox{$s_{m+i,m+j}(u)$}
\end{array} \right\vert.
\]
\end{lem}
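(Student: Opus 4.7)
The plan is to compute $\vartheta_m(s_{ij}(uq^m))$ by applying the three factors of the composition $\vartheta_m = \varpi_{q^{-1},m+n}\circ \iota_m \circ \varpi_{q,n}$ one at a time, keeping careful track of the diagonal prefactors, and then to identify the result with the stated quasi-determinant via a standard block-matrix (Schur complement) argument.

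First, applying $\varpi_{q,n}$ and using its defining formula $\varpi_{q,n}(S(u)) = D\wtl{\mathscr S}(uq^n)D^{-1}$, a direct computation on the $(i,j)$-entry yields $\varpi_{q,n}(s_{ij}(uq^m)) = q^{j-i}\,\wtl{\mathscr s}_{ij}(uq^{m+n})$, where $\wtl{\mathscr s}_{ij}(v)$ denotes the $(i,j)$-entry of $\mathscr S(v)^{-1}$ in $\rY_{q^{-1}}^{\rm tw}(\fko_n)$. Next, since $\iota_m$ is an algebra homomorphism sending each generator $\mathscr s_{ab}(v)$ to $\mathscr s_{m+a,m+b}(v)$, it transports the rational expression $\wtl{\mathscr s}_{ij}(v)$ to the $(i,j)$-entry of the inverse of the $n\times n$ submatrix $\mathscr S^{(m)}(v) := (\mathscr s_{m+a,m+b}(v))_{a,b=1}^n$ sitting inside $\mathscr S(v)$ in $\rY_{q^{-1}}^{\rm tw}(\fko_{m+n})$. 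Finally, applying $\varpi_{q^{-1},m+n}$ transforms entries according to $\mathscr s_{m+a,m+b}(v)\mapsto q^{a-b}\wtl s_{m+a,m+b}(vq^{-(m+n)})$; using that a homomorphism commutes with matrix inversion and evaluating at $v=uq^{m+n}$, the image becomes the $(i,j)$-entry of $\bigl(Q\,\wtl S^{(m)}(u)\,Q^{-1}\bigr)^{-1}$, where $Q=\mathrm{diag}(q,q^2,\dots,q^n)$ and $\wtl S^{(m)}(u) = (\wtl s_{m+a,m+b}(u))_{a,b=1}^n$ is the bottom-right $n\times n$ block of $\wtl S(u) = S(u)^{-1}$. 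This last step contributes a scalar $q^{i-j}$ that cancels the $q^{j-i}$ from the first step, yielding $\vartheta_m(s_{ij}(uq^m)) = \bigl((\wtl S^{(m)}(u))^{-1}\bigr)_{ij}$.

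To finish, I write $S(u)$ in block form with an $m\times m$ top-left block $A$, an $n\times n$ bottom-right block $D$, and off-diagonal blocks $B,C$. The standard block-inversion formula says that the bottom-right $n\times n$ block of $S(u)^{-1}$ equals $(D-CA^{-1}B)^{-1}$, so $(\wtl S^{(m)}(u))^{-1} = D - CA^{-1}B$. By the Schur-complement characterization of quasi-determinants---the definition $|X|_{kl}=((X^{-1})_{lk})^{-1}$ applied to the $(m+1)\times (m+1)$ matrix with boxed entry at position $(m+1,m+1)$---its $(i,j)$-entry is precisely the displayed quasi-determinant. The main burden in carrying this out is the bookkeeping: threading the two diagonal conjugations through three homomorphisms and keeping straight which of the algebras $\tY(\fko_{\bullet})$ (in $q$ versus $q^{-1}$) each expression lives in. Invertibility of all relevant submatrices is automatic, since $S(u)$ has constant term $I+O(u^{-1})$ with block lower-triangular leading term.
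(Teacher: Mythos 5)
Your proposal is correct and follows essentially the same route as the paper: the paper's proof likewise traces the composition $\varpi_{q^{-1},m+n}\circ\iota_m\circ\varpi_{q,n}$ to conclude that $\vartheta_m$ sends $\tl s_{ij}(u)$ to $\tl s_{m+i,m+j}(uq^{-m})$ (your cancellation of the two diagonal conjugations is exactly this step, spelled out), and then identifies the inverse of the bottom-right block of $\wtl S(u)$ with the quasi-determinant via the Schur-complement argument, which the paper delegates to the analogous \cite[Lemma 2.14.1]{Mol07}. Your write-up simply makes both steps explicit.
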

\begin{proof}
By tracing the definition $\vartheta_m$ in \eqref{eq:theta-m} through \eqref{iota} and \eqref{varpi}, one sees that $\vartheta_m$ maps $\tl s_{ij}(u)$ to $\tl s_{m+i,m+j}(uq^{-m})$. The rest of the proof is similar to that of \cite[Lemma 2.14.1]{Mol07}.
\end{proof}

\begin{cor}\label{theta}
The map $\vartheta_m\colon \tY(\fko_n)\longrightarrow \tY(\fko_{m+n})$ sends 
\[
d_i(uq^m)\mapsto d_{m+i}(u), \quad
e_{ij}(uq^m)\mapsto e_{m+i,m+j}(u), \quad
f_{ji}(uq^m)\mapsto f_{m+j,m+i}(u).
\]
\end{cor}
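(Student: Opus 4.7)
The plan is to reduce the corollary to the uniqueness of the Gauss decomposition of an auxiliary $n \times n$ matrix built via a Schur complement. Denote by $\Sigma(u)$ the $n \times n$ matrix whose $(i,j)$ entry is $\vartheta_m(s_{ij}(uq^m))$; by Lemma \ref{lem:theta}, each such entry is the explicit quasi-determinant displayed there. Partitioning $S(u) \in \tY(\fko_{m+n})$ in $(m \mid n)$-block form as $S(u)=\left(\begin{smallmatrix} A(u) & B(u) \\ C(u) & G(u) \end{smallmatrix}\right)$, the standard quasi-determinant identity for Schur complements (cf.\ \cite[\S 1.10]{Mol07}) gives
\[
\Sigma(u) \;=\; G(u) - C(u)\, A(u)^{-1}\, B(u).
\]

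Next, I would argue that $\vartheta_m$ commutes with the formation of the Gauss factors. Since $\vartheta_m$ is an algebra homomorphism, and the Gauss decomposition is built from the $s_{ij}$ through purely algebraic operations (matrix inversion and entry extraction on power series with invertible constant terms), one has $\vartheta_m(|X(u)|_{ii}) = |\vartheta_m(X(u))|_{ii}$ whenever both sides are defined. Consequently, the images $\vartheta_m(d_i(uq^m))$, $\vartheta_m(e_{ij}(uq^m))$, $\vartheta_m(f_{ji}(uq^m))$ are precisely the diagonal, upper, and lower Gauss factors of the matrix $\Sigma(u)$ computed inside $\tY(\fko_{m+n})$.

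It remains to compute this Gauss decomposition of $\Sigma(u)$ a second way. Write the full Gauss decomposition $S(u) = F(u) D(u) E(u)$ in $\tY(\fko_{m+n})$ and partition each factor in the same $(m \mid n)$-block form, so that $D(u) = \mathrm{diag}(D_1(u), D_2(u))$ with $D_2(u) = \mathrm{diag}(d_{m+1}(u), \ldots, d_{m+n}(u))$. A direct block multiplication yields
\[
G(u) - C(u) A(u)^{-1} B(u) \;=\; F_{22}(u)\, D_2(u)\, E_{22}(u),
\]
where $F_{22}(u)$ (resp.\ $E_{22}(u)$) is lower (resp.\ upper) unitriangular with entries $f_{m+j,m+i}(u)$ (resp.\ $e_{m+i,m+j}(u)$) for $1 \le i < j \le n$. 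This is manifestly the Gauss decomposition of $\Sigma(u)$, so the uniqueness of Gauss decomposition delivers the three claimed identities. The only nontrivial ingredient is the block Schur-complement factorization, but it is classical once one multiplies out the partitioned $FDE$; hence no essentially new computation is required beyond what was already used in Lemma \ref{lem:theta}.
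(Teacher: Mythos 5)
Your argument is correct, but it runs along a different track than the paper's. You prove the statement by the classical ``Schur complement'' route: identify the matrix $\Sigma(u)=\big(\vartheta_m(s_{ij}(uq^m))\big)$ with $G(u)-C(u)A(u)^{-1}B(u)$ via the quasi-determinant formula, check by block-multiplying $S(u)=F(u)D(u)E(u)$ that this Schur complement equals $F_{22}(u)D_2(u)E_{22}(u)$, note that an algebra homomorphism carries a Gauss decomposition to a Gauss decomposition, and invoke uniqueness. All three steps are sound (the unitriangularity and invertibility of constant terms needed for uniqueness do hold here), and this is essentially the argument in \cite[\S 1--2]{Mol07} for Yangians. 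The paper instead avoids both the block computation and the uniqueness argument: it first records the composition identity $\vartheta_{\ell_1}\circ\vartheta_{\ell_2}=\vartheta_{\ell_1+\ell_2}$, then reads off from Lemma \ref{lem:theta} the closed expressions $d_i(u)=\vartheta_{i-1}(s_{11}(uq^{i-1}))$, $e_{ij}(u)=\vartheta_{i-1}\big(s_{11}(uq^{i-1})^{-1}s_{1,j-i+1}(uq^{i-1})\big)$, $f_{ji}(u)=\vartheta_{i-1}\big(s_{j-i+1,1}(uq^{i-1})s_{11}(uq^{i-1})^{-1}\big)$, and concludes by composing with $\vartheta_m$. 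The paper's route is shorter and yields the reusable identity $\vartheta_{\ell_1}\circ\vartheta_{\ell_2}=\vartheta_{\ell_1+\ell_2}$ as a by-product; yours is more self-contained in that it does not require introducing that identity, at the cost of redoing the block factorization that is implicitly the content of Lemma \ref{lem:theta} (whose proof is itself the quasi-determinant/Schur-complement computation of \cite[Lemma 2.14.1]{Mol07}). Either proof is acceptable.
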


\begin{proof}
Since $\vartheta_m$ maps $\tl s_{ij}(u)$ to $\tl s_{m+i,m+j}(uq^{-m})$, it follows that 
\begin{align} \label{l12}
    \vartheta_{\ell_1}\circ \vartheta_{\ell_2}=\vartheta_{\ell_1+\ell_2}, \qquad (\ell_1,\ell_2\gge 0); 
\end{align}
here and below it is understood that $n$ is fixed throughout when considering $\vartheta_m$ for various $m$. By Lemma \ref{lem:theta}, we have
\begin{align*}
d_i(u)&=\vartheta_{i-1}(s_{11}(uq^{i-1}))\\
e_{ij}(u)&=\vartheta_{i-1}(s_{11}\big(uq^{i-1})^{-1}s_{1,j-i+1}(uq^{i-1})\big),\\
f_{ji}(u)&=\vartheta_{i-1}\big(s_{j-i+1,1}(uq^{i-1})s_{11}(uq^{i-1})^{-1}\big).
\end{align*}
Then the lemma follows from applying \eqref{l12}.
\end{proof}

Recall the homomorphism $\jmath: \tY(\fko_m) \rightarrow \tY(\fko_{m+n})$ from \eqref{jmath}.

\begin{lem}\label{comsubalg}
The subalgebras $\jmath(\tY(\fko_m))$ and $\vartheta_m(\tY(\fko_{n}))$ of $\tY(\fko_{m+n})$ commute with each other.
\end{lem}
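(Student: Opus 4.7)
My plan is to reduce the claim to Lemma~\ref{stslem}, which gives $[s_{ij}(u), \tl s_{kl}(v)] = 0$ whenever the index sets $\{i,j\}$ and $\{k,l\}$ are disjoint. For this reduction I need a generating set of $\vartheta_m(\tY(\fko_n))$ expressed in terms of the \emph{inverse} entries $\tl s_{kl}$ with $k,l>m$; it is the switch from $s$-entries to $\tl s$-entries that is the substantive step, and this is what makes the map $\vartheta_m$ (rather than a naive top-left/bottom-right embedding) the right object for the statement.

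First, by the definition of $\jmath$ in \eqref{jmath}, the subalgebra $\jmath(\tY(\fko_m))$ is generated by the coefficients of $s_{ij}(u)$ for $1\lle i,j\lle m$. For the second subalgebra, I invoke the identity recorded in the proof of Lemma~\ref{lem:theta}: the map $\vartheta_m$ sends $\tl s_{ij}(u)$ to $\tl s_{m+i,m+j}(uq^{-m})$. I combine this with the observation that $\{\tl s_{ij}^{(r)} : 1\lle i,j\lle n,\ r\gge 0\}$ generates $\tY(\fko_n)$ as a $\bC(q)$-algebra: indeed, the constant term of $S(u)$ is lower-unitriangular by \eqref{ij0}--\eqref{ii0}, so $\wtl S(u)=S(u)^{-1}$ is a well-defined matrix with entries in $\tY(\fko_n)[[u^{-1}]]$; since the relation is symmetric in $S$ and $\wtl S$, the entries of $S(u)$ are in turn recovered as quasi-determinants of entries of $\wtl S(u)$, so the two sets of coefficients generate the same subalgebra. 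Applying $\vartheta_m$, I conclude that $\vartheta_m(\tY(\fko_n))$ is generated by the coefficients of $\tl s_{kl}(v)$ for $m+1\lle k,l\lle m+n$.

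With both generating sets in hand the commutativity is immediate: for any $i,j\in\{1,\dots,m\}$ and $k,l\in\{m+1,\dots,m+n\}$ the index sets $\{i,j\}$ and $\{k,l\}$ are disjoint, so Lemma~\ref{stslem} gives $[s_{ij}(u), \tl s_{kl}(v)]=0$. Hence each generator of $\jmath(\tY(\fko_m))$ commutes with each generator of $\vartheta_m(\tY(\fko_n))$, and the two subalgebras commute. The only non-cosmetic point is the generating-set bookkeeping in the preceding paragraph; I do not anticipate any further obstacle.
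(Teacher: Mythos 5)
Your proposal is correct and follows essentially the same route as the paper: both identify $\vartheta_m(\tY(\fko_n))$ as the subalgebra generated by the coefficients of $\tl s_{kl}(v)$ with $k,l>m$ (via the fact that $\vartheta_m$ sends $\tl s_{ij}(u)$ to $\tl s_{m+i,m+j}(uq^{-m})$) and then conclude by Lemma~\ref{stslem}. Your extra paragraph justifying that the coefficients of $\tl s_{ij}(u)$ generate the same algebra as those of $s_{ij}(u)$ is a point the paper leaves implicit, but it is the same argument.
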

\begin{proof}
Note that the map $\vartheta_m$ sends $\tl s_{ij}(u)$ in $\tY(\fko_n)$ to $\tl s_{m+i,m+j}(uq^{-m})$ in $\tY(\fko_{m+n})$. Thus $\vartheta_m(\tY(\fko_n))$ is generated by the coefficients of $\tl s_{m+i,m+j}(u)$ for $1\lle i,j\lle n$. Now the statement follows immediately from Lemma \ref{stslem}.
\end{proof}

As an immediate consequence, we obtain some simple relations among the new generators. 

\begin{cor}\label{cor:commu-d-e-f}
We have $[d_i(u),d_j(u)]=0$ for all $i\ne j$, and
\begin{align*}
&[e_i(u),e_j(v)]=[e_i(u),f_j(v)]=[f_i(u),f_j(v)]=0, &\text{for }|i-j|>1,\\
&[d_i(u),e_{j}(v)]=[d_i(u),f_j(v)]=0,&\text{ for }i\ne j,j+1.
\end{align*}
\end{cor}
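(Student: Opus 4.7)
The plan is to reduce every commutation relation in the statement to Lemma \ref{comsubalg}, by choosing for each pair of generators an index $m$ so that one of them lies in $\jmath(\tY(\fko_m))$ and the other in $\vartheta_m(\tY(\fko_{N-m}))$.

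First I would record two elementary facts about the Gauss-decomposition generators. By direct inspection of the quasi-determinant formulas in Section \ref{sec:Gauss}, for any $i<j$ each of $d_i(u)$, $e_{ij}(u)$, $f_{ji}(u)$ (and hence $\tl d_i(u)$, $\tl e_{ij}(u)$, $\tl f_{ji}(u)$) is a power series in the entries $s_{k\ell}(u)$ with $k,\ell\lle j$, and therefore lies in $\jmath(\tY(\fko_j)) \subset \tY(\fko_N)$. On the other hand, Corollary \ref{theta} shows that $d_{m+i}(u)$, $e_{m+i,m+j}(u)$, $f_{m+j,m+i}(u)$ all lie in $\vartheta_m(\tY(\fko_{N-m}))$ for any admissible $m,i,j$.

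Then I would treat each family case by case. For $[d_i(u),d_j(v)]=0$ with $i<j$, take $m=i$: then $d_i(u) \in \jmath(\tY(\fko_i))$ while $d_j(u)=\vartheta_i\bigl(d_{j-i}(uq^i)\bigr) \in \vartheta_i(\tY(\fko_{N-i}))$. For $[e_i(u),e_j(v)]=0$ with $j\gge i+2$, take $m=i+1$: then $e_i(u) \in \jmath(\tY(\fko_{i+1}))$ while $e_j(u)=\vartheta_{i+1}\bigl(e_{j-i-1}(uq^{i+1})\bigr) \in \vartheta_{i+1}(\tY(\fko_{N-i-1}))$. The relations $[f_i(u),f_j(v)]=0$ and $[e_i(u),f_j(v)]=0$ for $|i-j|>1$ are handled identically, since $f_j(u)=f_{j+1,j}(u)$ also involves only indices $\lle j+1$. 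For the mixed relations $[d_i(u),e_j(v)]=0$ and $[d_i(u),f_j(v)]=0$ with $i\ne j,j+1$, there are two sub-cases: if $i\lle j-1$ take $m=i$, while if $i\gge j+2$ take $m=j+1$. In each sub-case one checks directly that one generator lies in $\jmath(\tY(\fko_m))$ and the other in $\vartheta_m(\tY(\fko_{N-m}))$, and Lemma \ref{comsubalg} supplies the vanishing.

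The only point requiring any care is the index bookkeeping that guarantees the chosen split at $m$ indeed separates the two generators into the two commuting subalgebras of Lemma \ref{comsubalg}; there is no deeper obstacle.
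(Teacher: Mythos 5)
Your proposal is correct and takes essentially the same approach as the paper: the paper likewise picks a splitting index $m$ so that one generator lies in $\jmath(\tY(\fko_m))$ (via \eqref{eq:sij-Gauss}) and the other in $\vartheta_m(\tY(\fko_{n-m}))$ (via Lemma \ref{lem:theta} and \eqref{sij-Gauss-Inv}), and then invokes Lemma \ref{comsubalg}. The only cosmetic difference is that the paper works out just the representative case $[d_i(u),e_j(v)]=0$ (using $\tl d_i$ in the sub-case $i>j$ where you use Corollary \ref{theta} directly), whereas you spell out the index bookkeeping for all the cases.
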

\begin{proof}
We prove for example if $i\ne j,j+1$, then $[d_i(u),e_j(v)]=0$. The other relations can be proved similarly. 

We first consider the case $i<j$. Let $m=i$, then the coefficients of $d_i(u)$ are contained in the subalgebra $\jmath(\tY(\fko_m))$ by \eqref{eq:sij-Gauss} while the coefficients of $e_{j}(v)=-\tl e_{j}(v)$ are contained in the subalgebra $\vartheta_m(\tY(\fko_{n-m}))$ by Lemma \ref{lem:theta} and \eqref{sij-Gauss-Inv}. It follows from Corollary \ref{comsubalg} that $[d_i(u),e_{j}(v)]=0$. For the case $i>j$, one proves similarly $[\tl d_i(u),e_{j}(v)]=0$ by setting $m=j+1$. Thus, we again have $[d_i(u),e_{j}(v)]=0$.
\end{proof}

% Recall the anti-involution $\eta$ from \eqref{eta}.
% \begin{lem}\label{lem:tau}
% For $1\lle i<j\lle N$ and $\ 1\lle k\lle N$, we have
% \[
% \eta\big(e_{ij}(u)\big)=f_{ji}(u),\qquad \eta\big(f_{ji}(u)\big)=e_{ij}(u),\qquad \eta\big(d_{k}(u)\big)=d_{k}(u).
% \]
% \end{lem}

% Recall the automorphism $\zeta_N$ from \eqref{zeta}.
% \begin{lem}\label{zetamap}
% For $1\lle i<j\lle N$ and $\ 1\lle k\lle N$, we have
% \[
% \zeta_N(e_{ij}(u))=\tl f_{i'j'}(-u-\tfrac{N}{2}),\qquad \zeta_N(f_{ji}(u))=\tl e_{j'i'}(-u-\tfrac{N}{2}),
% \]
% \[
% \zeta_N\big(e_{i}(u)\big)=-f_{N-i}(-u-\tfrac{N}{2}),\quad 
% \zeta_N\big(f_{i}(u)\big)=-e_{N-i}(-u-\tfrac{N}{2}),\quad 
% \zeta_N\big(d_{k}(u)\big)=\tl d_{N+1-k}(-u-\tfrac{N}{2}).
% \]
% \end{lem}
% \begin{proof}

% Note that $\zeta_N$ is an automorphism and $\zeta_N(s_{ij}(u))=\tl s_{i'j'}(-u-\frac{N}{2})$, the claim follows from the uniqueness of Gauss decomposition by comparing the above formulas with \eqref{eq:sij-Gauss}.
% \end{proof}

\begin{lem}\label{lem:ei-generate-eij}
For $1\lle i<j<n$, we have
\beq\label{eq:ei-generate-eij}
(q^{-1}-q)e_{i,j+1}(u)=[e_{ij}(u),f_{j+1,j}^{(0)}]_q,\qquad (q^{-1}-q)f_{j+1,i}(u)=[f_{ji}(u),f_{j+1,j}^{(0)}]_q.
\eeq
In particular, the algebra $\tY(\fko_n)$ is generated by coefficients of $d_i(u)$, $e_j(u)$, and $f_j(u)$, for $1\lle i\lle n$ and $1\lle j<n$.
\end{lem}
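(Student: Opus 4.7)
The plan is a two-step argument: first prove both identities in the base case $i=1$ by exploiting the top-row simplification of the Gauss decomposition \eqref{eq:sij-Gauss}, then transport them to arbitrary $i$ using the shift homomorphism $\vartheta_{i-1}$ from \eqref{eq:theta-m}. A preliminary zero-mode identification is the key link to \eqref{eq:zero-mode-s}: reading off the $u^0$ coefficient of \eqref{eq:sij-Gauss} and using $d_k(u)=1+O(u^{-1})$, $e_{kj}(u)=O(u^{-1})$, every sum term drops out and only the diagonal piece survives, giving $s_{j+1,j}^{(0)}=f_{j+1,j}^{(0)}$. Hence the zero modes appearing in \eqref{eq:zero-mode-s} are precisely the $f_{j+1,j}^{(0)}$ of the statement.

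For the base case $i=1$, the sums over $k<1$ in \eqref{eq:sij-Gauss} are empty, so $s_{1j}(u)=d_1(u)\,e_{1j}(u)$ and $s_{j,1}(u)=f_{j,1}(u)\,d_1(u)$. By Corollary \ref{cor:commu-d-e-f}, $d_1(u)$ commutes with $f_{j+1,j}^{(0)}$ whenever $j\ge 2$ (since $1\neq j,j+1$). Therefore $d_1(u)$ can be pulled outside the $q$-brackets in \eqref{eq:zero-mode-s} to yield $d_1(u)\,[e_{1j}(u),f_{j+1,j}^{(0)}]_q=(q^{-1}-q)\,d_1(u)\,e_{1,j+1}(u)$ and $[f_{j,1}(u),f_{j+1,j}^{(0)}]_q\,d_1(u)=(q^{-1}-q)\,f_{j+1,1}(u)\,d_1(u)$. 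Cancelling the invertible series $d_1(u)$ gives both identities for $i=1$.

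For general $i\ge 2$, I would apply the homomorphism $\vartheta_{i-1}\colon\tY(\fko_{n-i+1})\to\tY(\fko_n)$ to the $i=1$ identity in $\tY(\fko_{n-i+1})$ with parameter $j-i+1$. After the substitution $u\mapsto uq^{i-1}$, Corollary \ref{theta} sends the triple $(e_{1,j-i+1}(u),\,f_{j-i+2,j-i+1}^{(0)},\,e_{1,j-i+2}(u))$ to $(e_{i,j}(u),\,f_{j+1,j}^{(0)},\,e_{i,j+1}(u))$, producing the desired identity, and the $f$-identity is transported in exactly the same way. For the generation claim, the Gauss decomposition \eqref{eq:sij-Gauss} reduces matters to showing every $e_{ij}(u)$ and $f_{ji}(u)$ lies in the subalgebra generated by the coefficients of the $d_k(u), e_k(u), f_k(u)$; iterating \eqref{eq:ei-generate-eij} starting from $e_{i,i+1}(u)=e_i(u)$ and $f_{i+1,i}(u)=f_i(u)$, and using that $f_{j+1,j}^{(0)}$ is a coefficient of $f_j(u)$, does exactly this. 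The only real hurdle is bookkeeping the spectral shifts $u\mapsto uq^{\pm(i-1)}$ in Corollary \ref{theta}; the genuine content is concentrated in the two-line base case, driven entirely by the zero-mode identification $s_{j+1,j}^{(0)}=f_{j+1,j}^{(0)}$ and the commutativity $[d_1(u),f_{j+1,j}^{(0)}]=0$.
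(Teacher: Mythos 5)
Your proof is correct, and it reaches the general case by a genuinely different route than the paper. The base case $i=1$ — the zero-mode identification $s_{j+1,j}^{(0)}=f_{j+1,j}^{(0)}$ read off from \eqref{eq:sij-Gauss}, followed by pulling the invertible $d_1(u)$ out of the $q$-bracket via Corollary \ref{cor:commu-d-e-f} and cancelling it — is exactly the paper's base case (the paper's line ``$f_{j+1,j}^{(0)}=s_{j,j+1}^{(0)}$'' is a typo for $s_{j+1,j}^{(0)}$; your version is the right one). For $i\ge 2$ the paper instead runs an induction on $i$: it applies \eqref{eq:zero-mode-s} to the full expansion $s_{ij}(u)=d_i(u)e_{ij}(u)+\sum_{k<i}f_{ik}(u)d_k(u)e_{kj}(u)$, uses Lemma \ref{comsubalg} and Corollary \ref{cor:commu-d-e-f} to commute $d_i(u)$, $d_k(u)$, $f_{ik}(u)$ past $f_{j+1,j}^{(0)}$, and cancels the $k<i$ tail against the right-hand side by the induction hypothesis. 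You transport the rank-reduced $i=1$ identity through $\vartheta_{i-1}$ using Corollary \ref{theta}; since $\vartheta_{i-1}$ is an algebra homomorphism and the zero mode $f_{j-i+2,j-i+1}^{(0)}$ is unaffected by the spectral substitution $u\mapsto uq^{i-1}$, the $q$-bracket identity carries over verbatim. Your route avoids the induction and the commutativity bookkeeping for the lower-triangular tail, at the cost of leaning more heavily on Corollary \ref{theta} (hence Lemma \ref{lem:theta}); both are sound, and the final generation claim is handled the same way in each.
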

\begin{proof}
We prove the first identity by induction on $i$. Note that $f_{j+1,j}^{(0)}=s_{j,j+1}^{(0)}$. By \eqref{eq:sij-Gauss} and \eqref{eq:zero-mode-s}, we have
\[
\big[d_1(u)e_{1j}(u),f_{j+1,j}^{(0)}\big]_q=(q^{-1}-q)d_1(u)e_{1,j+1}(u).
\]
The base case $i=1$ follows from this and the fact that the (invertible) $d_1(u)$ commutes with $f_{j+1,j}^{(0)}$ by Corollary \ref{cor:commu-d-e-f}.

Again by \eqref{eq:sij-Gauss} and \eqref{eq:zero-mode-s}, we have
\beq\label{expand ss}
\begin{split}
\Big[d_i(u)e_{ij}(u)&+\sum_{k<i}f_{ik}(u)d_k(u)e_{kj}(u),f_{j+1,j}^{(0)}\Big]_q\\
&=(q^{-1}-q)\Big(d_i(u)e_{i,j+1}(u)+\sum_{k<i}f_{ik}(u)d_k(u)e_{k,j+1}(u)\Big).
\end{split}
\eeq
It follows from Lemma \ref{comsubalg} and Corollary \ref{cor:commu-d-e-f} that, for $k<i<j$, we have
\[
[d_i(u),f_{j+1,j}^{(0)}]=[d_k(u),f_{j+1,j}^{(0)}]=[f_{ik}(u),f_{j+1,j}^{(0)}]=0.
\]
By these identities together with the induction hypothesis $$
[e_{kj}(u),f_{j+1,j}^{(0)}]_q=(q^{-1}-q)e_{k,j+1}(u),
$$ 
we simplify \eqref{expand ss} to become
$$
d_i(u)[e_{ij}(u),f_{j+1,j}^{(0)}]_q=(q^{-1}-q)d_i(u)e_{i,j+1}(u),
$$
proving the desired identity in \eqref{eq:ei-generate-eij}. 

The second identity in \eqref{eq:ei-generate-eij} follows from a similar calculation. 
\end{proof}

\subsection{Special twisted $q$-Yangians}
The twisted $q$-Yangian $\tY(\fko_n)$ is a $\gl_n$ analogy of affine $\imath$quantum group, cf. \cite{DF93}. We shall need the $\mathfrak{sl}_n$ version as a subalgebra of $\tY(\fko_n)$.

The \textit{special twisted $q$-Yangian} $\SY(\fko_n)$ is by definition the subalgebra of $\tY(\fko_n)$ generated by the coefficients of $\tl d_i(u)d_{i+1}(u)$, $e_{ij}(u)$, and $f_{ji}(u)$, for $1\lle i<j\lle n$.

For later use, we shall define the following generating series, for $1\lle i<j \lle n$:
\begin{align}
     \label{bdef}
\begin{split}
\bB_{ji}(u)&=\sum_{r\in\bZ} B_{ji,r}u^{r}=\frac{f_{ji}(u^{-1}q^{-i})-q^{-1}e_{ij}(uq^{-i})}{q-q^{-1}},\\
\bB_i(u)&=\sum_{r\in \bZ}B_{i,r}u^{r}:= \bB_{i+1,i}(u),
\end{split}
\\
\label{hdef}
\begin{split}
\bTH_0(u)&:=  1+ \sum_{r>0} (q-q^{-1})\acute\Theta_{0,r}u^{r}:=d_1(u^{-1}),\\
\bTH_i(u)&:= 1+ \sum_{r>0} (q-q^{-1}) \acute\Theta_{i,r}u^{r} =\tl d_{i}(u^{-1}q^{-i})d_{i+1}(u^{-1}q^{-i}),
\end{split}
\\
\BTH_i(u)&:=\frac{1-u^2}{1-q^{-2}u^2}\bTH_i(u),\qquad 0\lle i<n,\label{BTH}
\end{align}
cf. \cite[(3.41) \& Lemma 2.9]{LW21}. The elements (imaginary root vectors) $\acute\Theta_{i,r}$ correspond to the ones introduced in \cite{BK20} while the elements (imaginary root vectors) $\Theta_{i,r}$ are motivated from the $\imath$Hall algebra realization of the $q$-Onsager algebra \cite{LRW23}.

In terms of the new notations in \eqref{bdef}--\eqref{BTH}, Corollary~\ref{theta} can be rephrased as follows. 

\begin{cor} \label{theta2}
For the map $\vartheta_m\colon \tY(\fko_n) \longrightarrow \tY(\fko_{m+n})$, we have
\[
\vartheta_m(\bTH_i(u))=\bTH_{m+i}(u),\qquad \vartheta_m(\bB_i(u))=\bB_{m+i}(u).
\]
\end{cor}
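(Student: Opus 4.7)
Both identities reduce to direct substitutions into the definitions \eqref{bdef}--\eqref{hdef}, once Corollary \ref{theta} is reformulated appropriately. First I would rewrite Corollary \ref{theta} as a substitution rule: for any (formal) series variable $w$,
\[
\vartheta_m(d_i(w))=d_{m+i}(wq^{-m}),\qquad \vartheta_m(e_{ij}(w))=e_{m+i,m+j}(wq^{-m}),\qquad \vartheta_m(f_{ji}(w))=f_{m+j,m+i}(wq^{-m}),
\]
obtained by replacing $u$ with $wq^{-m}$ in the three identities there. Because $\vartheta_m$ is an algebra homomorphism, the same rule holds for $\tl d_i(w)$ with $\tl d_i(w)\mapsto \tl d_{m+i}(wq^{-m})$.

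Next, for the $\bB$ identity I would apply this rule to the defining formula $\bB_i(u)=\bigl(f_{i+1,i}(u^{-1}q^{-i})-q^{-1}e_{i,i+1}(uq^{-i})\bigr)/(q-q^{-1})$ from \eqref{bdef}. After $\vartheta_m$, the subscripts shift by $m$, and each occurrence of $q^{-i}$ combines with the substitution factor $q^{-m}$ into $q^{-(m+i)}$, reproducing the definition of $\bB_{m+i}(u)$ verbatim.

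For the $\bTH$ identity (with $i\gge 1$) the argument is parallel, starting from the product form $\bTH_i(u)=\tl d_i(u^{-1}q^{-i})\,d_{i+1}(u^{-1}q^{-i})$ in \eqref{hdef}. Since $\vartheta_m$ distributes over the product, each factor is transformed by the substitution rule, and the same $q$-shift compatibility $q^{-i}\cdot q^{-m}=q^{-(m+i)}$ yields $\tl d_{m+i}(u^{-1}q^{-(m+i)})\,d_{m+i+1}(u^{-1}q^{-(m+i)})=\bTH_{m+i}(u)$. There is no substantive obstacle; the entire statement is a bookkeeping of $q$-shifts, with the only nontrivial input being the compatibility between the subscript shift $i\mapsto m+i$ and the argument shift $w\mapsto wq^{-m}$ built into the normalizations of $\bB_i$ and $\bTH_i$.
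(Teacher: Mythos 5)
Your proposal is correct and matches the paper's intent exactly: the paper offers no written proof, presenting Corollary \ref{theta2} as an immediate rephrasing of Corollary \ref{theta} via the substitution $u\mapsto wq^{-m}$ in the definitions \eqref{bdef}--\eqref{hdef}, which is precisely the bookkeeping you carry out (including the correct restriction to $i\gge 1$ for the product form of $\bTH_i$).
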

\begin{lem}\label{lem:special-in-Gauss}
The special twisted $q$-Yangian $\SY(\fko_n)$ is generated by the coefficients of  $\bB_{i}(u)$ and $\bTH_i(u)$ \emph{(}or $\BTH_i(u)$\emph{)}, for $1\lle i<n$.
\end{lem}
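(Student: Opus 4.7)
The plan is to establish mutual containment: the coefficients of $\bB_i(u)$ and $\bTH_i(u)$ (or $\BTH_i(u)$), $1\lle i<n$, generate the same subalgebra of $\tY(\fko_n)$ as the defining generators of $\SY(\fko_n)$, namely the coefficients of $\tl d_i(u)d_{i+1}(u)$, $e_{ij}(u)$, and $f_{ji}(u)$ for $1\lle i<j\lle n$.

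For the diagonal part, the definition $\bTH_i(u)=\tl d_i(u^{-1}q^{-i})d_{i+1}(u^{-1}q^{-i})$ from \eqref{hdef} is just the series $\tl d_i(v)d_{i+1}(v)$ composed with the coefficient-wise invertible substitution $v=u^{-1}q^{-i}$, so the two sets of coefficients generate the same subalgebra. The equivalence with $\BTH_i(u)$ follows because the scalar prefactor $(1-u^2)/(1-q^{-2}u^2)$ in \eqref{BTH} lies in $1+u^2\bC(q)[[u^2]]$ and is therefore an invertible scalar power series; multiplication by it induces an invertible triangular change of coefficients.

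For the off-diagonal part, I plan to exploit the fact that $\bB_i(u)$ is a two-sided Laurent series secretly encoding both $e_i(u)$ and $f_i(u)$. Using \eqref{gauss-gen}, one computes $f_{i+1,i}(u^{-1}q^{-i})=\sum_{r\gge 0}q^{ir}f_i^{(r)}u^r$ and $e_{i,i+1}(uq^{-i})=\sum_{r\gge 1}q^{ir}e_i^{(r)}u^{-r}$. Substituting into \eqref{bdef} shows that the positive powers of $u$ in $\bB_i(u)$ yield the $f_i^{(r)}$ ($r\gge 0$) up to nonzero scalars, while the negative powers yield the $e_i^{(r)}$ ($r\gge 1$) up to nonzero scalars. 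Hence the coefficients of $\bB_i(u)$ generate exactly the same subalgebra as those of $e_i(u)$ and $f_i(u)$.

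Finally, to recover $e_{ij}(u)$ and $f_{ji}(u)$ for $j>i+1$ from the rank-one $e_i$'s and $f_i$'s, I would invoke Lemma \ref{lem:ei-generate-eij}: iterated $q$-commutators with the zero mode $f_{j+1,j}^{(0)}=f_j^{(0)}$ produce $e_{i,j+1}$ from $e_{ij}$ and $f_{j+1,i}$ from $f_{ji}$. Since $f_j^{(0)}$ is a nonzero scalar multiple of the $u^0$-coefficient of $\bB_j(u)$, a straightforward induction on $j-i$ delivers all the higher off-diagonal generators. I do not anticipate a genuine obstacle — the only mildly clever step is recognizing the two-sided Laurent structure of $\bB_i(u)$ that simultaneously encodes $e_i$ and $f_i$; once that is noted, the remaining arguments reduce to invertible changes of variables in the Cartan series and a direct appeal to Lemma \ref{lem:ei-generate-eij}.
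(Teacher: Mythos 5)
Your proposal is correct and follows the same route as the paper: the paper's own proof is the one-line observation that $\bB_i(u)$ contains the coefficients of $e_i(u)$ and $f_i(u)$ (as coefficients of negative and nonnegative powers of $u$, respectively, up to nonzero scalars), after which Lemma \ref{lem:ei-generate-eij} produces all higher $e_{ij}$ and $f_{ji}$, while $\bTH_i(u)$ visibly carries the coefficients of $\tl d_i(u)d_{i+1}(u)$. You have merely spelled out the invertible changes of variables that the paper leaves implicit, so there is nothing to correct.
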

\begin{proof}
This follows immediately from Lemma \ref{lem:ei-generate-eij} as $\bB_{i}(u)$ contains the coefficients of $e_{i}(u)$ and $f_i(u)$.
\end{proof}

Below we formulate the a weaker version of Proposition \ref{prop:PBW} on the PBW type bases for $\tY(\fko_n)$ and $\SY(\fko_n)$.

Let $V=\tY(\fko_n)$. Recall that we have the subalgebra $V_{\mc A}=V\cap \rU_{\mc A}$.
\begin{lem}\label{limitlem}
The algebra $V_{\mc A}$ is generated by 
\[
\dfrac{f_{ji}^{(r)}}{q-q^{-1}},\qquad \frac{e_{ij}^{(m)}}{q-q^{-1}},\qquad \frac{d_{k}^{(m)}}{q-q^{-1}},
\]
where $1\lle i<j\lle n$, $1\lle k\lle n$, $r\in\bZ_{\gge 0}$ and $m\in\bZ_{>0}$. Moreover, under the isomorphism \eqref{spec}, the images of these elements are given by
\[
\mathsf e_{ji}\otimes z^r-\mathsf e_{ij}\otimes z^{-r},\quad \mathsf e_{ij}\otimes z^m-\mathsf e_{ji}\otimes z^{-m},\quad \sfe_{kk}\otimes z^m-\sfe_{kk}\otimes z^{-m},
\]
respectively.
\end{lem}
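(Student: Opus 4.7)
The plan is to show that each of the new generators $\frac{d_k^{(m)}}{q-q^{-1}}$, $\frac{e_{ij}^{(m)}}{q-q^{-1}}$, $\frac{f_{ji}^{(r)}}{q-q^{-1}}$ differs from the corresponding original generator $\sigma_{ij}^{(r)}$ of $V_{\mc A}$ (described in \eqref{sigmaij}--\eqref{sigmaimg}) by an element of $(q-q^{-1})V_{\mc A}$, and to use this to conclude both the generating property and the classical limit formulas. The key is careful $(q-q^{-1})$-adic bookkeeping in the Gauss decomposition identities \eqref{eq:sij-Gauss}.

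First I would prove by induction on $i$ that $d_i(u)-1\in (q-q^{-1})V_{\mc A}[[u^{-1}]]$, and $e_{ij}(u),\,f_{ji}(u)\in (q-q^{-1})V_{\mc A}[[u^{-1}]]$ for $i<j$, which yields the membership in $V_{\mc A}$ of the three ratios appearing in the lemma. The base case $i=1$ is immediate since $d_1(u)=s_{11}(u)=1+(q-q^{-1})\sum_{r\gge 1}\sigma_{11}^{(r)}u^{-r}$, while $e_{1j}(u)=d_1(u)^{-1}s_{1j}(u)$ and $f_{j1}(u)=s_{j1}(u)d_1(u)^{-1}$; here $d_1(u)^{-1}$ has coefficients in $V_{\mc A}$ via the standard recursion for inverting a power series with leading term $1$. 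For the inductive step, solving \eqref{eq:sij-Gauss} yields
\[
d_i(u)= s_{ii}(u)-\sum_{k<i} f_{ik}(u) d_k(u) e_{ki}(u),
\]
together with analogous expressions for $e_{ij}(u)$ and $f_{ji}(u)$ that involve a left or right factor of $d_i(u)^{-1}$. By the inductive hypothesis, each product $f_{ik}(u) d_k(u) e_{kj}(u)$ with $k<i$ lies in $(q-q^{-1})^2 V_{\mc A}[[u^{-1}]]$, so it only contributes a higher-order correction.

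Next I would invert these identities. Writing $d_i=1+(q-q^{-1})\tilde d_i$, $e_{ij}=(q-q^{-1})\tilde e_{ij}$, $f_{ji}=(q-q^{-1})\tilde f_{ji}$ (all with coefficients in $V_{\mc A}$ by the previous step), and expanding \eqref{eq:sij-Gauss} then dividing through by $(q-q^{-1})$, one obtains
\[
\sigma_{ij}^{(r)}=\tilde e_{ij}^{(r)} + (q-q^{-1})\cdot(\text{polynomial in the new generators}), \qquad i<j,
\]
and similar identities for $\sigma_{ji}^{(r)}$ ($j>i$) and $\sigma_{ii}^{(m)}$. Since the $\sigma_{ij}^{(r)}$ generate $V_{\mc A}$, so do the new generators. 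These same identities also give $\tilde e_{ij}^{(m)}\equiv \sigma_{ij}^{(m)}$, $\tilde f_{ji}^{(r)}\equiv \sigma_{ji}^{(r)}$, and $\tilde d_i^{(m)}\equiv \sigma_{ii}^{(m)}$ modulo $(q-q^{-1})V_{\mc A}$, so that combined with \eqref{sigmaimg} we read off the stated images in $\rU(\gl_n[z^{\pm 1}]^\theta)$.

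The hard part will be the bookkeeping in the first step, specifically verifying that the coefficients of $d_i(u)^{-1}$ remain in $V_{\mc A}$ (and not merely in $\rU_{\mc A}$) and that the recursive formulas always produce correction terms of order at least $(q-q^{-1})^2$. Both facts rest on the observations that $V_{\mc A}$ is closed under inverting power series with leading term $1$ and higher coefficients in $V_{\mc A}$, and that every off-diagonal $s_{ij}(u)$, as well as every $s_{ii}(u)-1$, is divisible by $q-q^{-1}$ inside $V_{\mc A}$ by the very definition of $\sigma_{ij}^{(r)}$.
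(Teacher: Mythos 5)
Your proposal is correct and follows essentially the same route as the paper, whose proof is a one-line appeal to \eqref{eq:sij-Gauss} and \eqref{sigmaimg} ``via an obvious double induction on $i$ and then on $r$ (or $m$)''; your $(q-q^{-1})$-adic bookkeeping, the inversion of $d_i(u)$ inside $V_{\mc A}$, and the observation that the cross terms $f_{ik}d_ke_{kj}$ contribute only at order $(q-q^{-1})^2$ are exactly the content that induction carries out.
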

\begin{proof}
The lemma follows from \eqref{eq:sij-Gauss} using \eqref{sigmaimg} via an obvious double induction on $i$ and then on $r$ (or $m$).
\end{proof}

\begin{cor}\label{corPBW}
Under the isomorphism \eqref{spec}, the images of $B_{ji,r}$, $\acute{\Theta}_{i,m}$, and $\acute{\Theta}_{0,m}$ are given by
\[
\mathsf e_{ji}\otimes z^r-\mathsf e_{ij}\otimes z^{-r},~~(\sfe_{i+1,i+1}-\sfe_{ii})\otimes z^m-(\sfe_{i+1,i+1}-\sfe_{ii})\otimes z^{-m},~~\sfe_{11}\otimes z^m-\sfe_{11}\otimes z^{-m},
\]
where $1\lle i<j\lle n$, $r\in\bZ$, and $m\in\bZ_{>0}$.
\end{cor}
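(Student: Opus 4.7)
The plan is to read off each image directly from the defining formulas \eqref{bdef}--\eqref{BTH} using Lemma~\ref{limitlem}, keeping careful track of the factors of $q$ that appear once the series are evaluated at $u^{\pm 1} q^{-i}$. The statement is really a specialization calculation rather than a structural result, so no new machinery is needed beyond what has already been established.

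\smallskip
\noindent\textbf{Step 1 (the elements $B_{ji,r}$).} By definition
\[
\bB_{ji}(u)=\tfrac{1}{q-q^{-1}}\bigl(f_{ji}(u^{-1}q^{-i})-q^{-1}e_{ij}(uq^{-i})\bigr)
=\sum_{s\gge 0} \tfrac{q^{is}f_{ji}^{(s)}}{q-q^{-1}}\, u^{s}-\sum_{s\gge 1} \tfrac{q^{-1}q^{is}e_{ij}^{(s)}}{q-q^{-1}}\, u^{-s}.
\]
Thus for $r\gge 0$ we have $B_{ji,r}=q^{ir}\tfrac{f_{ji}^{(r)}}{q-q^{-1}}$, and for $r=-s<0$ we have $B_{ji,-s}=-q^{-1}q^{is}\tfrac{e_{ij}^{(s)}}{q-q^{-1}}$. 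Both lie in $V_{\mc A}$ by Lemma~\ref{limitlem}, and at $q=1$ the powers of $q$ become $1$; the claimed image $\mathsf e_{ji}\otimes z^r-\mathsf e_{ij}\otimes z^{-r}$ now follows uniformly from the two cases of Lemma~\ref{limitlem}.

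\smallskip
\noindent\textbf{Step 2 (the element $\acute\Theta_{0,m}$).} This one is immediate: $\bTH_0(u)=d_1(u^{-1})=1+\sum_{r\gge 1}d_1^{(r)}u^r$, so $(q-q^{-1})\acute\Theta_{0,r}=d_1^{(r)}$ and hence $\acute\Theta_{0,r}=\frac{d_1^{(r)}}{q-q^{-1}}$, whose image is $\sfe_{11}\otimes z^r-\sfe_{11}\otimes z^{-r}$ by Lemma~\ref{limitlem}.

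\smallskip
\noindent\textbf{Step 3 (the elements $\acute\Theta_{i,m}$, $1\lle i<n$).} Here the trick is the algebraic identity
\[
\tl d_i(v)\,d_{i+1}(v)-1 = d_i(v)^{-1}\bigl(d_{i+1}(v)-d_i(v)\bigr),
\]
applied with $v=u^{-1}q^{-i}$. By Lemma~\ref{limitlem} the coefficients of $d_k(v)-1$ lie in $(q-q^{-1})V_{\mc A}[[u]]$, so $d_i(v)^{-1}\equiv 1 \pmod{(q-q^{-1})V_{\mc A}[[u]]}$, and therefore
\[
\bTH_i(u)-1 \equiv d_{i+1}(u^{-1}q^{-i})-d_i(u^{-1}q^{-i}) \pmod{(q-q^{-1})^2\,V_{\mc A}[[u]]}.
\]
Extracting the coefficient of $u^r$ and dividing by $q-q^{-1}$ gives
\[
\acute\Theta_{i,r}\equiv q^{ir}\Bigl(\tfrac{d_{i+1}^{(r)}}{q-q^{-1}}-\tfrac{d_i^{(r)}}{q-q^{-1}}\Bigr) \pmod{(q-q^{-1})V_{\mc A}}.
\]
Specializing $q=1$ (so $q^{ir}\to 1$) and invoking Lemma~\ref{limitlem} for each $d_k^{(r)}/(q-q^{-1})$ yields the image
$(\sfe_{i+1,i+1}-\sfe_{ii})\otimes z^r-(\sfe_{i+1,i+1}-\sfe_{ii})\otimes z^{-r}$, as claimed.

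\smallskip
\noindent\textbf{What is (and isn't) hard.} There is no real obstacle: every ingredient is already in place. The only point requiring a bit of care is Step~3, where one must argue that the nonlinear contributions from inverting $d_i(v)$ and from the cross terms in the product $\tl d_i\,d_{i+1}$ are negligible modulo $(q-q^{-1})^2 V_{\mc A}$; writing $\tl d_i d_{i+1}-1=d_i^{-1}(d_{i+1}-d_i)$ makes this transparent and avoids any explicit expansion of the geometric series.
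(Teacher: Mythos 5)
Your proof is correct and follows exactly the route the paper intends: the corollary is stated without proof as an immediate consequence of Lemma~\ref{limitlem} and the definitions \eqref{bdef}--\eqref{BTH}, and your three steps simply spell out that specialization computation, including the correct handling of the harmless $q^{ir}$ factors and the mod-$(q-q^{-1})^2$ argument for $\tl d_i d_{i+1}$.
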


\begin{prop}\label{PBWgauss}
Give any total order on the set of generators $B_{ji,r}$ and $\acute{\Theta}_{k,m}$, where $r\in\bZ$, $m\in\bZ_{>0}$, $1\lle i<j\lle n$ , $0\lle k<n$ \emph{(}resp. $1\lle k<n$\emph{)}, such that $\acute{\Theta}_{k,m}\prec B_{ji,r}$, then the ordered monomials in these generators are linearly independent in $\tY(\fko_n)$ \emph{(}resp. $\SY(\fko_n)$\emph{)}. 
\end{prop}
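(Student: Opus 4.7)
The plan is to reduce the linear independence of ordered monomials in the Gauss currents to the classical PBW theorem for universal enveloping algebras, via the specialization isomorphism \eqref{spec}. The key input is Corollary \ref{corPBW}, which explicitly identifies the images of $B_{ji,r}$ and $\acute\Theta_{k,m}$ in the limit $q\to 1$.

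First, I would verify that every $B_{ji,r}$ and $\acute\Theta_{k,m}$ lies in the $\mc A$-form $V_{\mc A}$, where $V=\tY(\fko_n)$. Unpacking \eqref{bdef}--\eqref{hdef}: the $B_{ji,r}$ are, up to an explicit power of $q$, among the $\mc A$-generators $f_{ji}^{(s)}/(q-q^{-1})$ and $e_{ij}^{(s)}/(q-q^{-1})$ of Lemma \ref{limitlem}; the $\acute\Theta_{0,m}=d_1^{(m)}/(q-q^{-1})$ are themselves generators from Lemma \ref{limitlem}; and for $i\gge 1$, expanding the product $\tl d_i(u^{-1}q^{-i})d_{i+1}(u^{-1}q^{-i})$ via $\tl d_i=d_i^{-1}$ writes $\acute\Theta_{i,m}$ as $q^{im}(d_{i+1}^{(m)}-d_i^{(m)})/(q-q^{-1})$ plus higher-order corrections which are products of the $\mc A$-generators $d_*^{(r)}/(q-q^{-1})$ multiplied by positive powers of $(q-q^{-1})$. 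Hence each $\acute\Theta_{i,m}\in V_{\mc A}$, and consequently every ordered monomial $M_\alpha$ in the listed generators lies in $V_{\mc A}$.

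Second, by Corollary \ref{corPBW} the images of the generators under \eqref{spec} are the vectors $\sfe_{ji}\otimes z^r-\sfe_{ij}\otimes z^{-r}$ for $1\lle i<j\lle n$ and $r\in\bZ$, together with $(\sfe_{k+1,k+1}-\sfe_{kk})\otimes z^m-(\sfe_{k+1,k+1}-\sfe_{kk})\otimes z^{-m}$ for $1\lle k<n$, $m>0$, and $\sfe_{11}\otimes z^m-\sfe_{11}\otimes z^{-m}$ for $m>0$. These are manifestly linearly independent in $\gl_n[z^{\pm1}]^\theta$, and a triangular change of basis in the Cartan directions converts the latter two families into the standard basis $\sfe_{ii}\otimes z^m-\sfe_{ii}\otimes z^{-m}$ of the Cartan part, so the full list (over $0\lle k<n$) is a $\bC$-basis of $\gl_n[z^{\pm1}]^\theta$; dropping the $k=0$ family leaves a basis of $\fksl_n[z^{\pm1}]^\theta$, which is the subalgebra to which $\SY(\fko_n)_{\mc A}$ specializes.

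Finally, suppose a nontrivial $\bC(q)$-linear relation $\sum_\alpha c_\alpha M_\alpha=0$ holds among ordered monomials in $\tY(\fko_n)$. Multiplying through by a common denominator and extracting the highest common power of $(q-1)$, we may arrange $c_\alpha\in\mc A$ with $c_\beta(1)\ne 0$ for some $\beta$. Step one places each $M_\alpha$ in $V_{\mc A}$, so specialization at $q=1$ via \eqref{spec} produces $\sum_\alpha c_\alpha(1)\,\overline M_\alpha=0$ in $\rU(\gl_n[z^{\pm1}]^\theta)$, where each $\overline M_\alpha$ is an ordered monomial in the classical basis identified in step two. The classical PBW theorem, which is valid for any total order on a basis of a Lie algebra, then forces $c_\alpha(1)=0$ for all $\alpha$, a contradiction; the $\SY(\fko_n)$ case is identical inside $\rU(\fksl_n[z^{\pm1}]^\theta)$. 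I expect the hardest step to be the $\mc A$-integrality bookkeeping in step one, in particular tracking the higher-order corrections for $\acute\Theta_{i,m}$ with $i\gge 1$, whereas the imposed constraint $\acute\Theta_{k,m}\prec B_{ji,r}$ on the total order is inessential, since classical PBW is insensitive to the choice of ordering.
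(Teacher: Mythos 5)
Your proposal is correct and follows essentially the same route as the paper: the paper's proof is a one-line appeal to Corollary \ref{corPBW} and the specialization isomorphism \eqref{spec}, observing that the images of the ordered monomials are linearly independent in $\rU(\gl_n[z^{\pm1}]^\theta)$ by classical PBW. You have simply made explicit the $\mc A$-integrality bookkeeping, the triangular change of basis in the Cartan directions, and the standard clearing-of-denominators argument that the paper leaves implicit.
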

\begin{proof}
This follows immediately from Corollary \ref{corPBW} and the isomorphism \eqref{spec} as the images of the ordered monomials in these generators are linearly independent in $\rU(\gl_n[z^{\pm1}]^\theta)$.
\end{proof}

\section{Main results}\label{sec:main}

In this section we formulate our main results, including an explicit isomorphism between special twisted $q$-Yangians $\SY(\fko_n)$ and affine $\imath$quantum groups of type AI and a PBW theorem for affine $\imath$quantum groups as a corollary. 

\subsection{Main results}

Let $A=(c_{ij})_{1\lle i,j<n}$ be the Cartan matrix of type A$_{n-1}$. 

The \textit{affine $\imath$quantum group $\mathbf{U}^{\imath}$ of type AI in its Drinfeld presentation} \cite[Definition 3.10]{LW21} is the unital associative $\bC(q)$-algebra with generators $H_{i,r}$ and $B_{i,l}$, where $1\lle i<n$, $r\in\bZ_{>0}$ and $l\in\bZ$, subject to the following relations, for $m_1,m_2\in \bZ_{>0}$ and $k,l,k_1,k_2\in\bZ$:
\begin{align}
&[H_{i,m_1},H_{j,m_2}]=0,\label{relfirst}\\
&[H_{i,m},B_{j,l}]=\frac{[mc_{ij}]}{m}B_{j,l+m}-\frac{[mc_{ij}]}{m}B_{j,l-m},\label{hbcomp}\\
&[B_{i,k},B_{j,l}]=0,\hskip5.7cm \text{ if }c_{ij}=0,\\
&[B_{i,k},B_{j,l+1}]_{q^{-c_{ij}}}-q^{-c_{ij}}[B_{i,k+1},B_{j,l}]_{q^{c_{ij}}}=0,\qquad \text{ if }i\ne j,\\
&[B_{i,k},B_{i,l+1}]_{q^{-2}}-q^{-2}[B_{i,k+1},B_{i,l}]_{q^2}\notag\\
&\hskip 2cm =q^{-2}\Theta_{i,l-k+1}-q^{-4}\Theta_{i,l-k-1}+q^{-2}\Theta_{i,k-l+1}-q^{-4}\Theta_{i,k-l-1},\\
&B_{i,k_1}B_{i,k_2}B_{j,l}-[2]B_{i,k_1}B_{j,l}B_{i,k_2}+B_{j,l}B_{i,k_1}B_{i,k_2}+\{k_1\leftrightarrow k_2\}\notag\\
&=\Big(-\sum_{r\gge 0}q^{2r}[2][\Theta_{i,k_2-k_1-2r-1},B_{j,l-1}]_{q^{-2}}\notag\\
&\hskip 0.7cm -\sum_{r\gge 1}q^{2r-1}[2][B_{j,l},\Theta_{i,k_2-k_1-2r}]_{q^{-2}}-[B_{j,l},\Theta_{i,k_2-k_1}]_{q^{-2}}\Big)+\{k_1\leftrightarrow k_2\}.\label{rellast}
\end{align}
Here we set
\[
\Theta_{i,0}=\frac{1}{q-q^{-1}},\quad \Theta_{i,r}=0,\quad \text{ if }r<0,
\]
and $H_{i,r}$ are related to $\Theta_{i,r}$ by the following equation:
\beq\label{Th-H-rel}
1+\sum_{r\gge 0}(q-q^{-1})\Theta_{i,r}u^r=\exp\Big((q-q^{-1})\sum_{r\gge 1}H_{i,r}u^r\Big).
\eeq
Note that we specialize $\mathbb K_i$ and $C$ in \cite{LW21} to $1$; see \S\ref{sec:comments} for further discussions.

Similarly to \eqref{bdef}--\eqref{BTH}, introduce the generating series $\bB_i(u),\BTH_i(u),\bTH_i(u)$ and elements $\acute\Theta_{i,r}$ for affine $\imath$quantum group $\Ui$,
\begin{align*}
\bB_i(u)&:=\sum_{r\in\bZ}B_{i,r}u^r,\\
\BTH_i(u)&:=1+\sum_{r\gge 0}(q-q^{-1})\Theta_{i,r}u^r,\\
\bTH_i(u)&:=1+\sum_{r\gge 0}(q-q^{-1})\acute\Theta_{i,r}u^r:=\frac{1-q^{-2}u^2}{1-u^2}\BTH_i(u).
\end{align*}
Let $\delta(u)=\sum_{k\in\bZ}u^k$ be the formal delta function. 

\begin{prop}[{\cite[Theorems 5.1, 5.3]{LW21}}]\label{prop:series}
The above relations \eqref{relfirst}--\eqref{rellast} can be written in terms of generating series:
\begin{align}
&\bTH_i(u)\bTH_j(v) =\bTH_j(v)\bTH_i(u),\label{hhuo3}\\
&\bTH_i(u)\bB_j(v) =\frac{(1-q^{-c_{ij}}uv^{-1})(1-q^{c_{ij}}uv)}{(1-q^{c_{ij}}uv^{-1})(1-q^{-c_{ij}}uv)}\bB_j(v)\bTH_i(u),\label{hbuo3}\\
&\bB_i(u)\bB_j(v)=\bB_j(v)\bB_i(u),\qquad \qquad \hskip 3.3cm \text{ if }c_{ij}=0,\label{bbu1o3}\\
&(q^{c_{ij}}u-v)\bB_i(u)\bB_j(v)+(q^{c_{ij}}v-u)\bB_j(v)\bB_i(u)=0,\qquad  \text{ if }i\ne j,\label{bbu2o3}\\
&(q^2u-v) \bB_i(u) \bB_i(v) +(q^2v-u) \bB_i(v) \bB_i(u)\nonumber\\
&\hskip 3.4cm=\frac{1}{q-q^{-1}} \delta(uv) (u-v)\big(\bTH_i(v)-\bTH_i(u) \big),\label{bbu3o3}\\
&\bB_i(u_1)\bB_{i}(u_2)\bB_j(v)-[2]\bB_i(u_1)\bB_j(v)\bB_i(u_2)+\bB_j(v)\bB_i(u_1)\bB_i(u_2)+\{u_1\leftrightarrow u_2\}\notag\\
=&-\frac{\delta(u_1u_2)}{q-q^{-1}}\Big([\BTH_i(u_2),\bB_j(v)]_{q^{-2}}\frac{[2]vu_1^{-1}}{1-q^2u_2u_1^{-1}}+\label{serreuo3} \\&\hskip 3.3cm[\bB_j(v),\BTH_i(u_2)]_{q^{-2}}\frac{1+u_2u_1^{-1}}{1-q^2u_2u_1^{-1}}\Big)+\{u_1\leftrightarrow u_2\},\quad \text{if }c_{ij}=-1.\notag
\end{align}
\end{prop}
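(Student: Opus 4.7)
The proof is a dictionary-translation between the element-form relations \eqref{relfirst}--\eqref{rellast} and the series-form identities \eqref{hhuo3}--\eqref{serreuo3}, carried out in detail in \cite[\S 5]{LW21}. My plan is, for each element-level relation, to multiply by the appropriate monomial in $u,v$ (or $u_1,u_2,v$) and sum over the integer indices that range freely; the rational and delta-function prefactors will then emerge naturally from the resulting geometric or $\Theta$-series.

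The pure $BB$ commutator relations \eqref{bbu1o3} and \eqref{bbu2o3} follow immediately from the corresponding element-form relations by applying $\sum_{k,l\in\bZ}u^k v^l$. For \eqref{hhuo3}, note that $[H_{i,m_1},H_{j,m_2}]=0$ forces the one-variable exponentials $\exp\bigl((q-q^{-1})\sum_{m\gge 1}H_{i,m}u^m\bigr)$ to commute for different $i,j$, and the scalar normalization between $\BTH_i(u)$ and $\bTH_i(u)$ commutes through everything. For \eqref{hbuo3}, the key input is the logarithmic identity
\[
\sum_{m\gge 1}\frac{[mc_{ij}]}{m}(v^{-m}-v^m)u^m \;=\; \frac{1}{q-q^{-1}}\log \frac{(1-q^{-c_{ij}}uv^{-1})(1-q^{c_{ij}}uv)}{(1-q^{c_{ij}}uv^{-1})(1-q^{-c_{ij}}uv)}.
\]
Summing \eqref{hbcomp} with weight $v^l$ over $l\in\bZ$ yields $[H_{i,m},\bB_j(v)]=\frac{[mc_{ij}]}{m}(v^{-m}-v^m)\bB_j(v)$, so that the inner derivation by $(q-q^{-1})\sum_m H_{i,m}u^m$ acts on $\bB_j(v)$ as multiplication by a scalar power series in $u,v$. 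Exponentiating turns that scalar into the rational prefactor of \eqref{hbuo3}, and again the normalization between $\BTH_i$ and $\bTH_i$ passes harmlessly through $\bB_j(v)$.

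The most delicate step, and the main obstacle, is the appearance of the formal delta function $\delta(uv)=\sum_{k\in\bZ}(uv)^k$ on the right-hand sides of \eqref{bbu3o3} and \eqref{serreuo3}. These arise from double sums of the shape
\[
\sum_{k,l\in\bZ}u^k v^l\,\Theta_{i,l-k+r} \;=\; \delta(uv)\cdot v^{-r}\sum_{s\gge 0}(q-q^{-1})\Theta_{i,s}v^s,
\]
where the cutoff $\Theta_{i,s}=0$ for $s<0$ kills the otherwise-divergent $k$-sum and leaves precisely one geometric delta. Assembling the four $\Theta$-terms of the element-form relation with their $q^{-2},q^{-4}$ weights then conspires to produce the factor $(u-v)(\bTH_i(v)-\bTH_i(u))/(q-q^{-1})$ on the right of \eqref{bbu3o3}, once one accounts for the symmetry between the $l-k$ and $k-l$ contributions and converts $\BTH_i$ into $\bTH_i$ via the scalar factor $(1-q^{-2}u^2)/(1-u^2)$.

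For \eqref{serreuo3}, the analogous manipulation additionally needs the geometric expansions $\sum_{r\gge 0}q^{2r}(u_2 u_1^{-1})^r = (1-q^2 u_2 u_1^{-1})^{-1}$ to reproduce the $\sum_{r\gge 0}q^{2r}$ and $\sum_{r\gge 1}q^{2r-1}$ sums of \eqref{rellast}, together with careful tracking of the $\{k_1\leftrightarrow k_2\}$ symmetrization and the shift by one in the index of $\Theta$ that forces the $v\mapsto v u_1^{-1}$ rational prefactor. The bookkeeping of indices and signs is intricate---this is where I expect the real work to be---but it follows the standard pattern of passing between element and series form for current algebras, and it is executed in detail in \cite[Theorems 5.1 and 5.3]{LW21}.
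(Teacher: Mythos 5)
Your proposal is correct and follows exactly the route the paper relies on: the paper itself offers no proof of this proposition, citing it directly from \cite[Theorems 5.1, 5.3]{LW21}, and your generating-series translation (summing against monomials, the logarithmic identity behind \eqref{hbuo3}, and the extraction of $\delta(uv)$ from the truncated $\Theta$-sums) is precisely the computation carried out there. The key checkpoints you identify all verify, e.g.\ multiplying the summed form of the $i\ne j$ relation by $-q^{c_{ij}}uv$ recovers \eqref{bbu2o3} on the nose.
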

The relation \eqref{bbu3o3} has the following equivalent form,
\beq\label{bbunew}
\begin{split}
(q^2u-v) \bB(u) \bB(v)& +(q^2v-u) \bB(v) \bB(u)\\&=\frac{q^{-2}}{q-q^{-1}} \delta(uv) \big((q^2u-v)\BTH(v)+(q^2v-u)\BTH(u) \big);    
\end{split}
\eeq
see \cite[\S2.5]{LW21}.

\medskip 

Recall the elements $B_{i,r}$, $\Theta_{i,m}$, and $\acute\Theta_{i,m}$ of $\tY(\fko_n)$ obtained by Gauss decomposition of the matrix $S(u)$; see \eqref{bdef}--\eqref{BTH} in \S\ref{sec:Gauss}. 

Note that we have used the same notations for two algebras $\mathbf U^\imath$ and $\SY(\fko_n)$ defined in two different ways.
\begin{thm}
There is a $\bC(q)$-algebra isomorphism $\Phi:\mathbf U^\imath\to \SY(\fko_n)$, which sends
\[
B_{i,r}\mapsto B_{i,r},\quad \Theta_{i,m}\mapsto \Theta_{i,m},
\]
for $1\lle i<n$, $r\in\bZ$, and $m\in \bZ_{>0}$.
\end{thm}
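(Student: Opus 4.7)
The plan is to construct $\Phi$ as a well-defined algebra homomorphism by verifying that the Gauss-decomposition elements $\bB_i(u),\bTH_i(u)\in\SY(\fko_n)$ defined in \eqref{bdef}--\eqref{BTH} satisfy the Drinfeld-type relations \eqref{hhuo3}--\eqref{serreuo3} of $\mathbf U^\imath$, and then to establish bijectivity through a rank reduction combined with a PBW comparison. The key ingredients will be the embedding $\jmath$ of \eqref{jmath}, the shift map $\vartheta_m$ of \eqref{eq:theta-m}, the fact that their images commute (Lemma \ref{comsubalg}), their action on Gauss generators (Corollary \ref{theta2}), and the linear independence statement of Proposition \ref{PBWgauss}.

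\textbf{Existence.} Every relation in Proposition \ref{prop:series} involves only one or two indices $i,j$. When $|i-j|\gge 2$ so that $c_{ij}=0$, the relations \eqref{hhuo3}--\eqref{bbu1o3} follow directly from Lemma \ref{comsubalg}: for a suitable $m$, one of the pairs $(\bB_i,\bTH_i),(\bB_j,\bTH_j)$ sits inside $\jmath(\tY(\fko_{\ast}))$ while the other sits inside $\vartheta_m(\tY(\fko_{\ast}))$, and these two subalgebras commute. When $|i-j|\lle 1$, Corollary \ref{theta2} allows us to shift indices so that only the first two or three rows of $S(u)$ are involved; the relation to be verified then becomes an identity in $\tY(\fko_2)$ (rank 1) or $\tY(\fko_3)$ (rank 2). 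These low-rank identities are proved by direct computation from the defining reflection equation \eqref{quat-mat} together with the explicit quasi-determinantal formulas for $d_i(u), e_{ij}(u), f_{ji}(u)$, and this is precisely what Section \ref{sec:low} will carry out.

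\textbf{Bijectivity.} Surjectivity of $\Phi$ is immediate from Lemma \ref{lem:special-in-Gauss}, since the image already contains the coefficients of all $\bB_i(u)$ and $\bTH_i(u)$. For injectivity, introduce higher root vectors $B_{ji,r}\in\mathbf U^\imath$ by iterated $q$-commutators mirroring the identities \eqref{eq:ei-generate-eij} satisfied by the Gauss generators of $\SY(\fko_n)$, so that $\Phi(B_{ji,r})$ coincides with the Gauss element of the same name. A straightening argument using the relations \eqref{hbcomp}--\eqref{rellast} shows that ordered monomials in the $B_{ji,r}$ and $\acute\Theta_{i,m}$ span $\mathbf U^\imath$. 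Proposition \ref{PBWgauss} then asserts that the $\Phi$-images of these ordered monomials are linearly independent in $\SY(\fko_n)$, hence $\Phi$ is injective. A PBW basis for $\mathbf U^\imath$ drops out as an immediate corollary.

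\textbf{Main obstacle.} The principal technical difficulty lies in the low-rank verification underlying the existence step, specifically the Drinfeld--Serre relation \eqref{serreuo3} in rank 2 and the quadratic relation \eqref{bbu3o3} (equivalently \eqref{bbunew}) in rank 1. The delta-function terms on their right-hand sides arise from cross contributions in the expansion \eqref{eq:sij-Gauss} when one rewrites quadratic monomials in $\bB_i$ using \eqref{quat-mat}; isolating these cross terms and matching them against the correct combinations of $\bTH_i(u)$ factors requires careful bookkeeping with the quasi-determinants and the R-matrix \eqref{Rx}, and is the central computational challenge of the paper.
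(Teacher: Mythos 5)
Your proposal follows essentially the same route as the paper: verify the Drinfeld-type relations by reducing to ranks $1$ and $2$ via the shift maps $\vartheta_m$ and the commuting-subalgebra lemma (the paper packages this as an induction on $n$ with base cases $n=2,3$), get surjectivity from Lemma \ref{lem:special-in-Gauss}, and get injectivity by matching a spanning set of ordered monomials in $B_{ji,r}$, $\acute\Theta_{k,m}$ against the linearly independent images from Proposition \ref{PBWgauss}. The only point where you are vaguer than the paper is the spanning step: rather than a direct straightening, the paper filters $\mathbf U^\imath$ by the number of $B$-factors and identifies the $B$-subalgebra of the associated graded as a quotient of $\rU_q^{<}(\hsl_n)$, so that the spanning property follows from Tsymbaliuk's PBW theorem (Proposition \ref{PBW-A}).
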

\begin{proof}
We first prove the map sending $B_{i,k}\mapsto B_{i,k}$, $ \Theta_{i,m}\mapsto \Theta_{i,m}$ defines an surjective homomorphism.

It follows from Lemma \ref{lem:special-in-Gauss} that the special twisted $q$-Yangian $\SY(\fko_n)$ is generated by $B_{i,r}$ and $\Theta_{i,m}$ with $1\lle i<n$, $r\in\bZ$, and $m\in\bZ_{>0}$. Introduce new elements $H_{i,m}$ that are related to $\Theta_{i,m}$ (and hence $\acute\Theta_{i,m}$) by \eqref{Th-H-rel}. Then we prove that $B_{i,r}$, $\Theta_{i,m}$, $\acute\Theta_{i,m}$ and $H_{i,m}$ satisfy the relations \eqref{relfirst}--\eqref{rellast}. By Proposition \ref{prop:series}, these relations are equivalent to the relations \eqref{hhuo3}--\eqref{serreuo3} between generating series for $1\lle i,j<n$. We shall prove \eqref{hhuo3}--\eqref{serreuo3} by induction on $n$. The base cases $n=2,3$ will be established in Proposition \ref{sx2l} and Proposition \ref{thm:o3} below, respectively. Suppose now the relations hold for the special twisted $q$-Yangian $\SY(\fko_n)$, we would like to show it for the $\SY(\fko_{n+1})$ case. Recall that we have the natural homomorphism $\SY(\fko_{n})\to \SY(\fko_{n+1})$, $s_{ij}(u)\to s_{ij}(u)$ which sends $\bB_i(u)$, $\BTH_i(u)$ (and also $\bTH_i(u)$) of $\SY(\fko_{n})$ to the series in $\SY(\fko_{n+1})$ with the same name. On the other hand, we have the homomorphism $\vartheta_1:\SY(\fko_{n})\to \SY(\fko_{n+1})$ satisfying the properties in Corollaries \ref{theta} and \ref{theta2}. Therefore, the relations \eqref{hhuo3}--\eqref{serreuo3} hold for the case when $i,j<n$ or $i,j\gge 2$. It remains to prove the relations for the case when ($i=1$ and $j=n$) or ($i=n$ and $j=1$) which is obvious by Corollary \ref{cor:commu-d-e-f} if $n\gge 3$.

By the above observations, we have a surjective homomorphism
\beq\label{surj}
\Phi:\mathbf U^\imath\twoheadrightarrow \SY(\fko_n)
\eeq
which sends the generators $\Theta_{i,m}$ (or $H_{i,m}$), $B_{i,r}$ of $\mathbf U^\imath$ to the elements of $\SY(\fko_n)$ denoted by the same symbols. Thus it suffices to prove that the homomorphism $\Phi$ in \eqref{surj} is injective.

For any $1\lle i<j\lle n$, define elements $B_{ji,r}$ in $\mathbf U^\imath$ inductively by the rule:
\beq\label{bijr}
B_{i+1\, i,r}=B_{i,0},\qquad B_{ji,r}=[B_{j-1,0},B_{j-1\, i,r}]_q,
\eeq
if $j-i\gge 2$. Then it follows from Lemma \ref{lem:ei-generate-eij} that the homomorphism  $\Phi$ sends $B_{ji,r}$ in $\mathbf U^\imath$ to $B_{ji,r}$ in $\SY(\fko_n)$, where $B_{ji,r}$ in $\SY(\fko_n)$ are defined in \eqref{bdef}.

We shall prove that there exists a total order on the set of generators $B_{ji,r}$ and $\acute{\Theta}_{k,m}$, where $r\in\bZ$, $m\in\bZ_{>0}$, $1\lle i<j\lle n$, $0< k<n$, such that $\acute{\Theta}_{k,m}\prec B_{ji,r}$ and the ordered monomials in these elements span the affine $\imath$quantum group $\mathbf U^\imath$. 

It is clear by the relation \eqref{hbcomp} that the algebra $\mathbf U^\imath$ is spanned by the monomials 
\[
H_{j_1,m_1}H_{j_2,m_2}\cdots H_{j_l,m_l} B_{i_1,r_1}B_{i_2,r_2}\cdots B_{i_k,r_k}
\]
or equivalently (due to \eqref{relfirst} and \eqref{Th-H-rel}) by the monomials 
\beq\label{mono}
\acute\Theta_{j_1,m_1}\acute\Theta_{j_2,m_2}\cdots \acute\Theta_{j_l,m_l}B_{i_1,r_1}B_{i_2,r_2}\cdots B_{i_k,r_k}
\eeq
with various admissible indices. There is a filtration $\mathscr{F}_0\subset \mathscr{F}_1\subset \cdots\subset \mathscr F_k \subset \cdots \subset \mathbf U^\imath$ on $\mathbf U^\imath$, where $\mathscr F_k$ is the subspace spanned the monomials of the form \eqref{mono} (the number of generators $B_{i,r}$ occurs at most $k$ times).

Let $\gr\,\Ui$ be the associated graded algebra of $\Ui$ with respect to the above filtration. By abuse of notation, we shall denote the images of $B_{ji,r},\acute\Theta_{i,m}$ in $\gr\,\Ui$ again by $B_{ji,r},\acute\Theta_{i,m}$. Then it suffices to show that the ordered monomials in $B_{ji,r},\acute\Theta_{i,m}$ span $\gr\,\Ui$ which amounts to prove that any monomials $B_{i_1,r_1}B_{i_2,r_2}\cdots B_{i_k,r_k}$ is a linear combination of the ordered monomials in $B_{ji,r}$. 

Note that in $\gr\,\Ui$, the elements $B_{i,r}$ satisfy the relations
\begin{align*}
&[B_{i,k},B_{j,l}]=0,\hskip5.7cm \text{ if }c_{ij}=0,\\
&[B_{i,k},B_{j,l+1}]_{q^{-c_{ij}}}-q^{-c_{ij}}[B_{i,k+1},B_{j,l}]_{q^{c_{ij}}}=0,\\
&B_{i,k_1}B_{i,k_2}B_{j,l}-[2]B_{i,k_1}B_{j,l}B_{i,k_2}+B_{j,l}B_{i,k_1}B_{i,k_2}+\{k_1\leftrightarrow k_2\}=0,
\end{align*}
which are the defining relations of the negative half $\rU^{<}_q(\hgl_n)$; see \S\ref{subsec:loop}. Therefore, the subalgebra in $\gr\,\Ui$ generated by $B_{i,r}$ is a quotient of $\rU^{<}_q(\hgl_n)$. Moreover, it follows from \eqref{bijr-A} and \eqref{bijr} that the images of $\xi_{ji,r}$ correspond to $B_{ji,r}$ under this quotient. Now we pick the total order on $\xi_{ji,r}$ from \S\ref{subsec:loop}. This further induces a total order on $B_{ji,r}$ under the quotient. Then it follows from Proposition \ref{PBW-A} that any monomials $B_{i_1,r_1}B_{i_2,r_2}\cdots B_{i_k,r_k}$ is a linear combination of the ordered monomials in $B_{ji,r}$.  Finally, we extend the total order on $B_{ji,r}$ and $\acute\Theta_{k,m}$ such that $\acute\Theta_{k,m}\prec B_{ji,r}$. Note that the order between the elements $\acute\Theta_{k,m}$ is irrelevant as they commute. Therefore, we have proved that the ordered monomials in $B_{ji,r}$ and $\acute\Theta_{k,m}$ span the affine $\imath$quantum group $\mathbf U^\imath$. 

Now it follows from Proposition \ref{PBWgauss} that the surjective homomorphism $\Phi$ maps a spanning set of $\Ui$ to a linearly independent set of $\SY(\fko_n)$. Therefore, we obtain the injectivity of the homomorphism  $\Phi$, completing the proof of the main theorem.
\end{proof}

Recall that $A=(c_{ij})_{1\lle i,j<n}$ is the Cartan matrix of type A$_{n-1}$ and set $c_{01}=-1$.
\begin{thm}\label{mainthm2}
The twisted $q$-Yangian $\tY(\fko_n)$ is isomorphic to the unital associative $\bC(q)$-algebra with generators $H_{i,r}$ and $B_{j,l}$, where $0\lle i<n$, $1\lle j<n$, $r\in\bZ_{>0}$ and $l\in\bZ$, subject to the relations \eqref{relfirst}--\eqref{rellast}, for $m_1,m_2\in \bZ_{>0}$ and $k,l,k_1,k_2\in\bZ$.
\end{thm}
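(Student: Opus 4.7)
The plan is to extend the isomorphism $\Phi:\mathbf U^\imath\to \SY(\fko_n)$ of the main theorem to an isomorphism $\wt\Phi$ between the enlarged abstract algebra in the statement (call it $\wt{\mathbf U^\imath}$) and $\tY(\fko_n)$. The candidate map sends $B_{j,l}\mapsto B_{j,l}$ and $H_{i,r}\mapsto H_{i,r}$ (equivalently $\acute\Theta_{i,m}\mapsto \acute\Theta_{i,m}$), with the targets interpreted via the Gauss decomposition through \eqref{bdef}--\eqref{BTH}; in particular the image of the new Cartan generator $H_{0,r}$ is supplied by $\bTH_0(u)=d_1(u^{-1})$. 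The relations \eqref{relfirst}--\eqref{rellast} internal to $1\lle i,j<n$ are already satisfied in $\tY(\fko_n)$ by $\Phi$ and Proposition~\ref{prop:series}, so it suffices to check those involving index $0$.

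With the convention $c_{01}=-1$ and $c_{0j}=0$ for $j\gge 2$, the new relations in series form are: (i) $[\bTH_0(u),\bTH_j(v)]=0$ for all $j$; (ii) $[\bTH_0(u),\bB_j(v)]=0$ for $j\gge 2$; and (iii) the $\bTH_0$--$\bB_1$ commutation \eqref{hbuo3} specialized to $c_{01}=-1$. Statements (i) and (ii) are immediate from Corollary~\ref{cor:commu-d-e-f}, since $d_1$ commutes with every $d_i$ and with each $e_j, f_j$ for $j\gge 2$. For (iii) I would invoke the embedding $\jmath:\tY(\fko_2)\hookrightarrow \tY(\fko_n)$ of \eqref{jmath}, which fixes $s_{ij}(u)$ and therefore fixes $d_1$, $e_1$, $f_1$, and hence $\bTH_0$ and $\bB_1$; this reduces (iii) to the rank-2 identity to be established in Section~\ref{sec:low}.

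For surjectivity of $\wt\Phi$, Lemma~\ref{lem:ei-generate-eij} reduces the generating set of $\tY(\fko_n)$ to the coefficients of $d_i(u)$, $e_j(u)$, $f_j(u)$ for $1\lle i\lle n$ and $1\lle j<n$. The series $\bB_j(u)$ captures $e_j,f_j$, the series $\bTH_0(u)$ captures $d_1(u)$, and the remaining $d_{i+1}(u)$ are obtained inductively from $d_i(u)$ using $\bTH_i\propto \tl d_i\cdot d_{i+1}$ for $i\gge 1$; thus every generator of $\tY(\fko_n)$ lies in the image of $\wt\Phi$.

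Injectivity uses the same filtration-plus-PBW strategy as in the main theorem. Define $B_{ji,l}$ in $\wt{\mathbf U^\imath}$ inductively by \eqref{bijr} and filter $\wt{\mathbf U^\imath}$ by the number of $B$-factors. In the associated graded, the elements $B_{i,l}$ satisfy the defining relations of the negative half $\rU_q^{<}(\hsl_n)$, so Proposition~\ref{PBW-A} implies that ordered monomials in the $B_{ji,l}$ and the mutually commuting $\acute\Theta_{k,m}$ with $0\lle k<n$ (ordered so that $\acute\Theta_{k,m}\prec B_{ji,l}$) span $\wt{\mathbf U^\imath}$; the new generators $\acute\Theta_{0,m}$ fit at the bottom of the order thanks to~(i). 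Proposition~\ref{PBWgauss}, applied in the range $0\lle k<n$, then says $\wt\Phi$ sends this spanning set to a linearly independent family in $\tY(\fko_n)$, forcing $\wt\Phi$ to be an isomorphism. The only substantively new technical point is (iii); the rest is an organizational extension of the arguments already in place for the special twisted $q$-Yangian.
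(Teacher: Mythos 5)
Your proposal is correct and follows essentially the same route as the paper, whose proof of Theorem~\ref{mainthm2} is simply the assertion that the argument is ``completely parallel'' to that of the main theorem; your write-up is a faithful execution of that parallelism. In particular you correctly identify that the only genuinely new verification is the $\bTH_0$--$\bB_1$ relation, which indeed reduces via $\jmath$ to the first identity of Lemma~\ref{dblemo2}, with everything else (commutation of $d_1$ with the other currents, surjectivity via Lemma~\ref{lem:ei-generate-eij}, and the filtration/PBW injectivity argument using Proposition~\ref{PBWgauss} in the range $0\lle k<n$) already in place.
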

Note that $H_{0,m}$  $(r\in \bZ_{>0})$ are additional generators in $\tY(\fko_n)$, which are not present in $\SY(\fko_n)$. Also, we allow $i$ in $H_{i,m}$ to be 0 in the relations \eqref{relfirst}--\eqref{hbcomp}.

\begin{proof}
The proof is completely parallel to that of the main theorem.
\end{proof}

\subsection{Applications}
In this subsection, we discuss some obvious applications of our main results.

In the course of the proof of our main theorem, we also prove the following PBW theorem for affine $\imath$quantum group of type AI.

\begin{thm}\label{thm:PBW}
Give any total order on the set of generators $B_{ji,r}$ and $\acute{\Theta}_{k,m}$, where $r\in\bZ$, $m\in\bZ_{>0}$, $1\lle i<j\lle n$ , $0\lle k<n$ \emph{(}resp. $1\lle k<n$\emph{)}, such that \eqref{total} with $\xi$ replaced by $B$ is satisfied and $\acute{\Theta}_{k,m}\prec B_{ji,r}$, then the ordered monomials in these elements form a $\bC(q)$-basis of $\tY(\fko_n)$ \emph{(}resp. $\SY(\fko_n)$\emph{)}. Moreover, the statement remains true if all $\acute\Theta_{k,m}$ are replaced with  $H_{k,m}$ \emph{(}or $\Theta_{k,m}$\emph{)}. \qed
\end{thm}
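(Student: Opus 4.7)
The plan is to read off the PBW theorem almost for free from work already done in proving the main isomorphism theorem, combining spanning (established on the $\Ui$ side) with linear independence (established on the Gauss-generator side via the $q\to 1$ specialization).

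First I would handle the $\SY(\fko_n)$ case. In the proof of the main theorem, it was shown that the ordered monomials in $B_{ji,r}$ and $\acute\Theta_{k,m}$ (under any total order making $\acute\Theta_{k,m}\prec B_{ji,r}$ and satisfying the analogue of \eqref{total}) span $\Ui$; the argument used the filtration by the number of $B$-generators, identified the associated graded as a quotient of $\rU_q^{<}(\hsl_n)$, and invoked Proposition \ref{PBW-A}. Via the isomorphism $\Phi:\Ui\xrightarrow{\sim}\SY(\fko_n)$ from the main theorem, these monomials also span $\SY(\fko_n)$. On the other hand, Proposition \ref{PBWgauss} says that the image of such an ordered set of monomials under $\Phi$ (i.e.\ the ordered monomials in the Gauss generators $B_{ji,r},\acute\Theta_{k,m}$ of $\SY(\fko_n)$) is linearly independent, because by Corollary \ref{corPBW} they specialize at $q=1$ to linearly independent elements of $\rU(\gl_n[z^{\pm 1}]^\theta)$. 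Spanning plus linear independence gives the basis.

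The $\tY(\fko_n)$ case is entirely analogous, using Theorem \ref{mainthm2} in place of the main isomorphism: the extra generators $\acute\Theta_{0,m}$ (equivalently $H_{0,m}$) come from $d_1(u)$, and Proposition \ref{PBWgauss} (stated for both $\tY(\fko_n)$ and $\SY(\fko_n)$) already provides the corresponding linear independence with the extended range $0\lle k<n$. The filtration/spanning argument from the main theorem extends verbatim once the additional commuting family $\{\acute\Theta_{0,m}\}$ is appended on the left in the ordered monomial.

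Finally, to replace $\acute\Theta_{k,m}$ throughout by $\Theta_{k,m}$ or $H_{k,m}$, I would simply note that these three families of generators are mutually related by invertible triangular transformations: $\bTH_i(u)$ and $\BTH_i(u)$ differ by multiplication by the invertible series $\frac{1-q^{-2}u^2}{1-u^2}$ (so $\acute\Theta_{i,r}$ and $\Theta_{i,r}$ are related by an upper-unitriangular change of variables of the form $\acute\Theta_{i,r}=\Theta_{i,r}+\text{(polynomial in }\Theta_{i,s},\ s<r)$), and $\Theta_{i,r}$ and $H_{i,r}$ are related by the exponential relation \eqref{Th-H-rel}, which is likewise upper-unitriangular in degree. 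Since each Cartan family mutually commutes (\eqref{relfirst} and \eqref{hhuo3}), reordering within the Cartan block is immaterial, and such a triangular change sends any ordered-monomial basis to an ordered-monomial basis in the new generators.

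There is no real obstacle: the only thing worth checking carefully is that the spanning statement in the main theorem proof indeed produces monomials of exactly the form stipulated here (with a total order compatible with \eqref{total}), but this was built into that argument by transporting the order from $\xi_{ji,r}\in\rU_q^{<}(\hsl_n)$ to the images $B_{ji,r}$ in $\gr\,\Ui$.
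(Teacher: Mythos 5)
Your proposal is correct and matches the paper's intended argument: the theorem is stated with a \qed precisely because spanning was established in the proof of the main theorem (via the filtration and Proposition \ref{PBW-A}) and linear independence is Proposition \ref{PBWgauss}, with the isomorphisms $\Phi$ and Theorem \ref{mainthm2} transporting the statement between $\Ui$ and the (special) twisted $q$-Yangian. Your additional remark on the triangular change of variables among $\acute\Theta_{k,m}$, $\Theta_{k,m}$, $H_{k,m}$ is exactly the justification the paper leaves implicit for the last sentence of the theorem.
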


Another application is that we have the following alternative descriptions of the relation between $\tY(\fko_n)$ and $\SY(\fko_n)$.

Define $\sdet\, S(u)=d_1(u)d_2(uq^{-2})\cdots d_n(uq^{-2n+2})$ and set
\beq\label{sdet}
\sdet\, S(u)=1+\sum_{r\gge 1} \mathfrak c_r u^{-r}.
\eeq

\begin{prop}\label{prop:center}
cf. \cite[Theorems 2.8.2, 2.9.2, 2.12.1]{Mol07}
\qquad
\begin{enumerate}
\item The coefficients $\mathfrak c_r$ with $r\in\bZ_{>0}$ of the series $\sdet\, S(u)$ are central in $\tY(\fko_n)$. 
\item Denote by $\mathfrak C_n^{\rm tw}$ the subalgebra of $\tY(\fko_n)$ generated by the elements $\mathfrak c_r$ with $r\in\bZ_{>0}$. Then the coefficients $\mathfrak c_r$ with $r\in\bZ_{>0}$ of $\sdet\, S(u)$ are algebraically independent generators of the subalgebra $\mathfrak C_n^{\rm tw}$ of $\tY(\fko_n)$. 
\item We have an algebra isomorphism,
$$
\tY(\fko_n)\cong \SY(\fko_n)\otimes \mathfrak C_n^{\rm tw}.
$$
\item The series $\sdet\, S(u)$ coincides with the Sklyanin determinant $\sdet\,S(u)$ defined in \cite[\S4]{MRS03}. 
\end{enumerate}
\end{prop}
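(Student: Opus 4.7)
The plan is to establish the four parts in the order (4), (1), (2), (3), combining R-matrix computations with the classical limit \eqref{spec} and the PBW theorem (Theorem~\ref{thm:PBW}).

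For (4), I would apply the Gauss decomposition $S(u)=F(u)D(u)E(u)$ to the Sklyanin determinant of \cite{MRS03}. That construction is obtained by applying the $q$-antisymmetrizer of $(\bC^n)^{\otimes n}$ to a specific product of $S$-matrices at shifted arguments, with $R^t$-factors inserted between them. Because the $q$-antisymmetrizer annihilates any nontrivial contribution from the unitriangular factors $E(u),F(u)$, only the diagonal part $D(u)$ survives, and the shifts produced by the $R^t$-factors (via the crossing symmetry \eqref{cross} and the matrix $D$ from \eqref{D}) match exactly the shifts $uq^{-2i+2}$ appearing in $d_1(u)d_2(uq^{-2})\cdots d_n(uq^{-2n+2})$. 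This is the twisted $q$-analogue of the calculation carried out for untwisted/twisted Yangians in \cite[\S2.10--2.12]{Mol07}, where Lemma~\ref{lem:theta} plays the role of the column-reduction identity for quasi-determinants.

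Given (4), part (1) is immediate from the centrality of the MRS03 Sklyanin determinant in $\tY(\fko_n)$. For (2), I pass to the classical limit via \eqref{spec}: expanding $\sdet\,S(u)$ and applying Lemma~\ref{limitlem} term by term shows that, modulo products of strictly lower-degree Cartan generators, the image of $\mathfrak c_r/(q-q^{-1})$ under \eqref{spec} equals $\big(\sum_{k=1}^n \sfe_{kk}\big)\otimes(z^r-z^{-r})$. These elements generate a polynomial subalgebra inside the center of $\rU(\gl_n[z^{\pm1}]^\theta)$, so a standard triangular argument on the degree filtration forces the $\mathfrak c_r$ to be algebraically independent over $\bC(q)$.

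For (3), centrality from (1) ensures that the multiplication map $\mu\colon \SY(\fko_n)\otimes \mathfrak C_n^{\rm tw}\to \tY(\fko_n)$ is an algebra homomorphism. Surjectivity follows because each $d_i^{(r)}$ can be recovered inductively from the coefficients of $\tl d_j(u)d_{j+1}(u)\in \SY(\fko_n)$ and of $\sdet\,S(u)\in \mathfrak C_n^{\rm tw}$, whence $\mu$ hits all Gauss generators and thus, by Lemma~\ref{lem:ei-generate-eij}, all of $\tY(\fko_n)$. Injectivity follows from a triangular change of variable: replacing $\acute\Theta_{0,m}$ by $\mathfrak c_m$ in the PBW basis of $\tY(\fko_n)$ from Theorem~\ref{thm:PBW} exhibits the latter as the tensor product of the PBW basis of $\SY(\fko_n)$ with the polynomial basis of $\mathfrak C_n^{\rm tw}$ provided by (2).

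The main obstacle is part~(4): aligning the Gauss-determinantal product with the MRS03 antisymmetrizer formula requires careful bookkeeping of how $R^t(u^{-1},v)$, the diagonal matrix $D$, and the crossing symmetry \eqref{cross} propagate through the unitriangular factors inside the antisymmetrizer. The analogous computation in the untwisted Yangian setting is already delicate, and the twisted reflection equation \eqref{quat-mat} introduces additional interactions that must be tracked carefully.
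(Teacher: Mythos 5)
Your overall architecture is sound, and parts (2) and (3) essentially match the paper's argument: the paper derives both as corollaries of Theorem~\ref{thm:PBW} via \eqref{sdet} and the triangular relation $\bTH_i(u^{-1}q^{-i})=\tl d_i(u)d_{i+1}(u)$, and your classical-limit argument for (2) is a valid variant since Proposition~\ref{PBWgauss} itself rests on the specialization \eqref{spec}. One structural difference worth noting: you route (1) through (4) together with the centrality of the Sklyanin determinant from \cite{MRS03}, which makes the easiest statement depend on the hardest one. The paper instead proves (1) directly from the already-established current relations: by \eqref{hhuo3} the $d_i$'s commute among themselves, and by \eqref{hbuo3} the rational factors picked up when commuting $\bB_j(v)$ past the product $d_1(u)d_2(uq^{-2})\cdots d_n(uq^{-2n+2})$ telescope to $1$. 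This keeps (1)--(3) entirely independent of (4), which is cleaner (though your route is not wrong, granted that \cite{MRS03} do prove centrality of their $\sdet S(u)$).

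The genuine gap is in (4). The assertion that ``the $q$-antisymmetrizer annihilates any nontrivial contribution from the unitriangular factors $E(u),F(u)$, so only $D(u)$ survives'' is not the mechanism by which the identification is made, and as stated it would fail: in the Sklyanin determinant the matrices $S$ appear at \emph{different} shifted arguments with $R^t$-factors interleaved, so one cannot substitute the Gauss decomposition at a single argument and let antisymmetry kill the unitriangular parts --- the factors $E,F$ at distinct arguments do not simply drop out under the antisymmetrizer. The actual argument (that of \cite[Theorem 2.12.1]{Mol07}, which the paper follows) is an induction on the rank: one peels off one factor at a time using the recurrence relation for Sklyanin minors established in the proof of \cite[Theorem 4.7]{MRS03}, and identifies the resulting ratio of consecutive Sklyanin minors with the appropriately shifted $d_i(u)$ via the quasi-determinant formula of Lemma~\ref{lem:theta} and Corollary~\ref{theta}. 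You correctly flag this step as the main obstacle, but as written the key step of (4) is asserted rather than proved, and the asserted one-shot annihilation would not survive the bookkeeping you yourself call for.
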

\begin{proof}
The first statement follows from \eqref{hhuo3} and a straightforward calculation using \eqref{hbuo3}. The second and third statements are corollaries of Theorem \ref{thm:PBW} by noting \eqref{sdet} and $$\bTH_i(u^{-1}q^{-i})=\tl d_i(u)d_{i+1}(u),$$ where by convention $\tl d_0(u)=1$.

The proof of the last statement is similar to \cite[Theorem 2.12.1]{Mol07} with the application of the recurrence relation for Sklyanin minors established in the proof of \cite[Theorem 4.7]{MRS03}. In particular, the second statement also follows from the last one and \cite[Proposition 4.4]{MRS03}.
\end{proof}

Thus the special twisted $q$-Yangian $\SY(\fko_n)$ can also be regarded as a quotient of the twisted $q$-Yangian $\tY(\fko_n)$.
\begin{lem}\label{syquo}
The subalgebra $\SY(\fko_n)$ is isomorphic to the quotient of $\tY(\fko_n)$ by the ideal generated by the coefficients of $\sdet\,S(u)-1$, i.e., 
$\SY(\fko_n)\cong \tY(\fko_n)/(\sdet\,S(u)-1).$
\end{lem}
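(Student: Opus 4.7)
The plan is to deduce this directly from the structural results already collected in Proposition \ref{prop:center}. The key observation is that the defining relation for the ideal simply kills the polynomial central subalgebra $\mathfrak{C}_n^{\rm tw}$.

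First I would rewrite the ideal in question in terms of the $\mathfrak{c}_r$. From \eqref{sdet} one has $\sdet\, S(u)-1 = \sum_{r\geq 1}\mathfrak{c}_r u^{-r}$, so the two-sided ideal $I := (\sdet\, S(u)-1)$ in $\tY(\fko_n)$ is the same as the two-sided ideal generated by the set $\{\mathfrak{c}_r\}_{r\geq 1}$. Since these elements are central by Proposition \ref{prop:center}(1), $I$ coincides with $\tY(\fko_n)\cdot J$, where $J \subset \mathfrak{C}_n^{\rm tw}$ denotes the augmentation ideal generated by $\{\mathfrak{c}_r\}_{r\geq 1}$.

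Next I would transport the picture across the tensor product isomorphism
$$\Psi:\SY(\fko_n)\otimes \mathfrak{C}_n^{\rm tw}\xrightarrow{\ \sim\ }\tY(\fko_n)$$
from Proposition \ref{prop:center}(3). Because $\mathfrak{C}_n^{\rm tw}$ is central and the generators $\mathfrak{c}_r$ are algebraically independent by Proposition \ref{prop:center}(2), so that $\mathfrak{C}_n^{\rm tw}\cong \bC(q)[\mathfrak{c}_1,\mathfrak{c}_2,\ldots]$, the preimage $\Psi^{-1}(I)$ is precisely $\SY(\fko_n)\otimes J$. The quotient $\mathfrak{C}_n^{\rm tw}/J$ is then canonically isomorphic to $\bC(q)$, and hence
$$
\tY(\fko_n)/I \;\cong\; \SY(\fko_n)\otimes \bigl(\mathfrak{C}_n^{\rm tw}/J\bigr)\;\cong\; \SY(\fko_n).
$$
Tracing through $\Psi$ shows that this isomorphism is induced by the canonical map $\SY(\fko_n)\hookrightarrow \tY(\fko_n)\twoheadrightarrow \tY(\fko_n)/I$, which is exactly the statement of the lemma.

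There is no real obstacle once Proposition \ref{prop:center} is in hand; the only point requiring a moment's care is the identification $\Psi^{-1}(I) = \SY(\fko_n)\otimes J$, but this is immediate from the polynomiality and centrality of $\mathfrak{C}_n^{\rm tw}$.
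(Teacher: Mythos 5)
Your argument is correct and is precisely the intended one: the paper states Lemma \ref{syquo} without a written proof, treating it as an immediate consequence of Proposition \ref{prop:center}, and your proposal supplies exactly that deduction (centrality and algebraic independence of the $\mathfrak c_r$ identify the ideal with $\SY(\fko_n)\otimes J$ under the tensor decomposition, whence the quotient is $\SY(\fko_n)$). No gaps; nothing further is needed.
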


Let $\wp(u)$ and $\bar \wp(u)$ be any formal power series in $u^{-1}$ and $u$ with coefficients in $\bC$, respectively, 
\[
\wp(u)=\sum_{r\gge 0}\wp_ru^{-r}\in \bC[[u^{-1}]],\qquad \bar\wp(u)=\sum_{r\gge 0}\bar\wp_ru^{r}\in \bC[[u]],
\]
such that $\wp_0\bar\wp_0=1$. There is an automorphism of $\Uq(\hgl_n)$ defined by
\beq\label{eq:mu_f-A}
\mu_{\wp(u),\bar\wp(u)}:T(u)\mapsto \wp(u)T(u),\qquad \overline T(u)\mapsto \bar\wp(u)\overline T(u).
\eeq

The {\it quantum loop algebra} for $\mathfrak{sl}_n$ is the subalgebra $\Uq(\hsl_n)$ of $\Uq(\hgl_n)$ which consists of all elements stable under all the automorphisms of the form \eqref{eq:mu_f-A}; see \cite[Example 1.15.4]{Mol07}.

% Define the \textit{quantum determinant} $\mathrm{qdet}\,T(u)$ by the formula,
% \[
% \mathrm{qdet}\,T(u)=\sum_{\sigma\in\mathfrak S_n}(-q)^{-\ell(\sigma)}\cdot t_{\sigma(1)1}(u)\cdots t_{\sigma(n)n}(uq^{-2n+2}),
% \]
% where $\mathfrak S_n$ is the symmetric group permuting $\{1,2,\cdots,n\}$ and $\ell(\sigma)$ is the length of a reduced expression of $\sigma$ in $\mathfrak S_n$. Similarly, the quantum determinant $\mathrm{qdet}\,\overline T(u)$ is defined by the same formula with $t_{ij}(u)$ replaced with $\bar t_{ij}(u)$. It is well known that the coefficients of $\mathrm{qdet}\,T(u)$ and $\mathrm{qdet}\,\overline T(u)$ are central elements in $\Uq(\hgl_n)$.

% Set
% \[
% \mathrm{qdet}\,T(u)=\sum_{r\gge 0}\mathtt c_r u^{-r},\qquad \mathrm{qdet}\,\overline T(u)=\sum_{r\gge 0}\bar{\mathtt c}_r u^{r}.
% \]
% Then $\mathtt c_0\bar{\mathtt c_0}=1$ by \eqref{zerocartan}. Let $\mathfrak C_n$ be the subalgebra of $\Uq(\hgl_n)$ generated by the coefficients $\mathtt c_r$ and $\bar{\mathtt c}_r$ with $r\in\bZ_{\gge 0}$. 

% The following statement is well known.

% \begin{prop}\label{A-tensor}
% The elements of $\mathfrak C_n$ are central in $\Uq(\hgl_n)$. The algebra $\Uq(\hgl_n)$ is isomorphic to the tensor products of the subalgebras $\Uq(\hsl_n)$ and $\mathfrak C_n$,
% \[
% \Uq(\hgl_n)\cong \Uq(\hsl_n)\otimes \mathfrak C_n.
% \]
% Moreover, the elements $\mathtt c_r$ and $\bar{\mathtt c}_{r+1}$ with $r\in\bZ_{\gge 0}$ are algebraically independent generators of $\mathfrak C_n$.
% \end{prop}

Let $\xi(u)$ be any formal power series in $u^{-1}$ with leading term $1$,
\[
\xi(u)=1+\sum_{r>0}\xi_ru^{-r}\in \bC[[u^{-1}]].
\]
There is an automorphism of $\tY(\fko_n)$ defined by
\beq\label{eq:mu_g-twisted}
\nu_{\xi(u)}:\tY(\fko_n) \longrightarrow \tY(\fko_n), \qquad S(u)\mapsto \xi(u)S(u).
\eeq

\begin{prop}[{cf. \cite[Theorem 11.7]{Kol14}}]
The special twisted $q$-Yangian is the subalgebra of $\tY(\fko_n)$ which consists of the elements stable under all automorphisms of the form \eqref{eq:mu_g-twisted}. Equivalently,
\[
\SY(\fko_n)=\Uq(\hsl_n)\cap \tY(\fko_n).
\]
\end{prop}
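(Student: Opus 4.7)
The plan is to first identify $\SY(\fko_n)$ with the subalgebra of $\nu_{\xi(u)}$-invariants of $\tY(\fko_n)$, and then to match those invariants with $\Uq(\hsl_n)\cap \tY(\fko_n)$ by comparing the two families of automorphisms on $\tY(\fko_n)$.

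For the first step, one inclusion is immediate from the Gauss decomposition: since $\xi(u)S(u)=F(u)\,(\xi(u)D(u))\,E(u)$, the automorphism $\nu_{\xi(u)}$ fixes every $e_{ij}(u)$ and $f_{ji}(u)$ and sends $d_i(u)\mapsto\xi(u)d_i(u)$, so in particular every $\tl d_i(u)d_{i+1}(u)$ is preserved; by Lemma \ref{lem:special-in-Gauss} this fixes a generating set of $\SY(\fko_n)$. For the reverse inclusion, I would invoke the algebra isomorphism $\tY(\fko_n)\cong \SY(\fko_n)\otimes\mathfrak{C}_n^{\rm tw}$ together with the polynomial-algebra description $\mathfrak{C}_n^{\rm tw}=\bC(q)[\mathfrak{c}_1,\mathfrak{c}_2,\dots]$ from Proposition \ref{prop:center}. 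The family $\nu_{\xi(u)}$ acts trivially on the $\SY(\fko_n)$-factor and preserves $\mathfrak{C}_n^{\rm tw}$ by sending $\sdet\,S(u)\mapsto\eta(u)\sdet\,S(u)$, where $\eta(u):=\prod_{k=0}^{n-1}\xi(uq^{-2k})$. Writing an invariant element $x=\sum_i a_i\otimes b_i$ with $\{a_i\}$ linearly independent in $\SY(\fko_n)$ forces each $b_i\in\mathfrak{C}_n^{\rm tw}$ to be $\nu_{\xi(u)}$-invariant. Specializing $\xi(u)=1+\alpha u^{-N}$, for $N$ the largest index appearing in $b_i$, yields $\mathfrak{c}_r\mapsto \mathfrak{c}_r$ for $r<N$ and $\mathfrak{c}_N\mapsto\mathfrak{c}_N+\alpha\big(\sum_{k=0}^{n-1}q^{2kN}\big)$; the scalar coefficient is nonzero for $q$ a formal variable, so varying $\alpha$ forces $b_i$ not to depend on $\mathfrak{c}_N$, and iteration then gives $b_i\in\bC(q)$.

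For the equivalence with $\Uq(\hsl_n)\cap\tY(\fko_n)$, I would simply restrict $\mu_{\wp(u),\bar\wp(u)}$ to $\tY(\fko_n)$: from $S(u)=T(u)\bT(u^{-1})^t$ one gets $\mu_{\wp,\bar\wp}(S(u))=\wp(u)\bar\wp(u^{-1})S(u)$, so the restriction coincides with $\nu_{\xi(u)}$ for $\xi(u):=\wp(u)\bar\wp(u^{-1})$. Conversely, any $\xi(u)\in 1+u^{-1}\bC[[u^{-1}]]$ is realized by the choice $\wp(u)=\xi(u)$, $\bar\wp(u)=1$. Hence the two invariant subalgebras of $\tY(\fko_n)$ agree, completing the identification. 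The only non-routine point is the polynomial-shift argument inside $\mathfrak{C}_n^{\rm tw}$ (verifying nonvanishing of the shift coefficient), which is elementary for $q$ formal and is not expected to present a serious obstacle.
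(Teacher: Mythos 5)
Your argument is correct. The paper's own proof is a two-line sketch: after the easy inclusion $\SY(\fko_n)\subset\mathscr{SY}_n$ (the $\nu$-invariants), it reduces, via Proposition \ref{prop:center}, to proving a second tensor decomposition $\tY(\fko_n)\cong\mathscr{SY}_n\otimes\mathfrak{C}_n^{\rm tw}$, and for that it defers to Molev's Theorem 1.8.2, whose method is to construct a normalizing series with coefficients in $\mathfrak{C}_n^{\rm tw}$ satisfying $\prod_{k=0}^{n-1}\xi(uq^{-2k})=(\sdet S(u))^{-1}$ and to check that the coefficients of $\xi(u)S(u)$ are stable under all automorphisms \eqref{eq:mu_g-twisted}. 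You share the same skeleton (easy inclusion from uniqueness of the Gauss decomposition, plus Proposition \ref{prop:center}) but replace the Molev-style normalization by a direct computation of the $\nu_{\xi(u)}$-action on the polynomial algebra $\mathfrak{C}_n^{\rm tw}=\bC(q)[\mathfrak{c}_1,\mathfrak{c}_2,\dots]$: the specialization $\xi(u)=1+\alpha u^{-N}$ translates $\mathfrak{c}_N$ by $\alpha\,(q^{2nN}-1)/(q^{2N}-1)\neq 0$ while fixing $\mathfrak{c}_r$ for $r<N$, so the only $\nu$-invariants in $\mathfrak{C}_n^{\rm tw}$ are scalars and $\mathscr{SY}_n\subset\SY(\fko_n)$ follows directly. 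This is arguably more economical, since it avoids establishing a second tensor decomposition, at the cost of being tailored to the invariant-theoretic statement rather than producing normalized generators. You also make explicit the identification $\mu_{\wp(u),\bar\wp(u)}\big|_{\tY(\fko_n)}=\nu_{\wp(u)\bar\wp(u^{-1})}$ and the surjectivity of $(\wp,\bar\wp)\mapsto\wp(u)\bar\wp(u^{-1})$ onto the admissible $\xi$, which is exactly what the ``Equivalently'' clause requires and which the paper leaves implicit; that addition is worth recording.
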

\begin{proof}
Let $\mathscr{SY}_n$ be the subalgebra of $\tY(\fko_n)$ which consists of the elements stable under all automorphisms of the form \eqref{eq:mu_g-twisted}. Then it is obvious that $\SY(\fko_n)\subset \mathscr{SY}_n$. By Proposition \ref{prop:center}, it suffices to show that 
\[
\tY(\fko_n)\cong \mathscr{SY}_n\otimes \mathfrak C_n^{\rm tw}.
\]
This is proved similarly as \cite[Theorem 1.8.2]{Mol07}.
\end{proof}

\subsection{Some comments}\label{sec:comments}
We conclude this section by adding some comments.

\noindent $\bullet$ Note that there are extra central elements $\mathbb K_i^{\pm 1}$ and $C^{\pm 1}$ in the universal affine $\imath$quantum group $\wtl{\mathbf U}^\imath$; see \cite[Definition 3.10 \& Theorem 5.1]{LW21}. It can easily seen that specializing these central elements to any nonzero scalars gives rise to an algebra isomorphic to $\Ui$. First, in the relations listed in \cite[Theorem 5.1]{LW21}, the element $C$ will be absent if we apply the substitutions,
\[
z\longmapsto zC^{-\frac12},\qquad w\longmapsto wC^{-\frac12}.
\]
The elements $\mathbb K_i^{\pm 1}$ can be eliminated from the relations as well by rescaling the series $\bB_i(u)$,
\[
\bB_i(u)\longmapsto \mathbb K_i^{-\frac12}\bB_i(u).
\]
Then the relations in \cite[Theorem 5.1]{LW21} reduces to the relations \eqref{hhuo3}--\eqref{serreuo3}.

\noindent $\Huge{\bullet}$ As pointed out in \cite[\S3.3]{MRS03}, one can construct coideal subalgebra from the quantum affine algebra $\Uq(\widehat{\gl}_n)$ with central charge $c$. However, the new matrix $S(u)$ still satisfies the same reflection equation \eqref{quat-mat} which does not involve the central charge $c$. Thus the corresponding subalgebra is isomorphic to the twisted $q$-Yangian, as an abstract algebra.

\section{Current relations in low ranks}\label{sec:low}

In this section, we establish the relations among the new current generators for twisted $q$-Yangians of rank one and two, following similar strategy of \cite{LWZ23a} adapted to the $q$-version.

\subsection{The rank 1 case}
In this subsection, we shall work with $\tY(\fko_2)$. Recall 
\beq \label{GD2}
\begin{split}
S(u)& =\begin{bmatrix} 1 & 0\\ f(u) & 1\end{bmatrix}\begin{bmatrix} d_1(u) & 0\\ 0 & d_2(u)\end{bmatrix}\begin{bmatrix} 1 & e(u)\\ 0 & 1\end{bmatrix}\\
&=\begin{bmatrix} d_1(u) & d_1(u)e(u)\\ f(u)d_1(u) & d_2(u)+f(u)d_1(u)e(u)\end{bmatrix},
\end{split}
\eeq
\beq\label{GD2inv}
\begin{split}
\wtl S(u)& =\begin{bmatrix} 1 & -e(u)\\ 0 & 1\end{bmatrix}\begin{bmatrix} \tl d_1(u) & 0\\ 0 & \tl d_2(u)\end{bmatrix}\begin{bmatrix} 1 & 0\\ -f(u) & 1\end{bmatrix}\\
&=\begin{bmatrix} \tl d_1(u)+e(u)\tl d_2(u)f(u) & -e(u)\tl d_2(u)\\ -\tl d_2(u)f(u) & \tl d_2(u)\end{bmatrix},
\end{split}
\eeq
where we dropped the subscripts for $e_1(u)$ and $f_1(u)$ in this rank 1 case. In this case, we have the R-matrices 
\beq\label{R2}
R(u,v)=\begin{bmatrix}
    uq^{-1} - q v& 0& 0& 0\\ 
    0& u - v&(q^{-1}- q) v& 0\\
    0&(q^{-1} - q) u& u - v&0\\
    0& 0& 0& uq^{-1} - q v
\end{bmatrix}
\eeq
and
\beq\label{R2t}
R^{t}(u^{-1},v)=\begin{bmatrix}
    u^{-1}q^{-1} - q v& 0& 0& (q^{-1} - q) u^{-1}\\ 
    0& u - v&0& 0\\
    0&0& u - v&0\\
    (q^{-1}- q) v& 0& 0& u^{-1}q^{-1} - q v
\end{bmatrix}
\eeq
in terms of Kronecker product. Similarly, the element $S_1(u)$ and $\wtl S_2(v)$ can be written as
\[
S_1(u)=\begin{bmatrix}
   s_{11}(u)& 0& s_{12}(u)& 0\\ 
    0& s_{11}(u)&0& s_{12}(u)\\
    s_{21}(u)&0& s_{22}(u)&0\\
    0& s_{21}(u)& 0& s_{22}(u)
\end{bmatrix}
\]
and 
\[
\wtl S_2(v)=\begin{bmatrix}
   \tl s_{11}(v)& \tl s_{12}(v)& 0& 0\\ 
    \tl s_{21}(v)& \tl s_{22}(v)&0& 0\\
    0&0& \tl s_{11}(v)&\tl s_{12}(v)\\
    0& 0& \tl s_{21}(v)& \tl s_{22}(v)
\end{bmatrix}
\]
We shall compare the entries of the matrices in the following equalities
\begin{align}
R(u,v)S_1(u)R^t(u^{-1},v)S_2(v)&=S_2(v) R^t(u^{-1},v)S_1(u)R(u,v),\label{ref2}\\
\wtl S_2(v) R(u,v)S_1(u)R^t(u^{-1},v)&=R^t(u^{-1},v)S_1(u)R(u,v)\wtl S_2(v).\label{ref-inv2}
\end{align}

\begin{lem}\label{lem:ddo2}
We have $[d_i(u),d_j(v)]=0$.
\end{lem}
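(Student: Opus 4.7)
My plan is to split the assertion into the three cases $(i,j)\in\{(1,1),(1,2),(2,2)\}$ (the case $(2,1)$ follows by antisymmetry of the commutator), and to combine the Gauss formulas \eqref{GD2}, \eqref{GD2inv} with suitable consequences of the reflection equations \eqref{ref2}, \eqref{ref-inv2}.

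For Case $(1,1)$, the identity $d_1(u)=s_{11}(u)$ from \eqref{GD2} reduces the claim to $[s_{11}(u),s_{11}(v)]=0$. I would extract this from the series relation \eqref{siju}--\eqref{alphau} specialized to $i=j=a=b=1$: every Kronecker-indicator term vanishes (no strict inequality among equal indices can hold), so $\alpha_{1111}(u,v)=(q^{-1}-quv)s_{11}(u)s_{11}(v)$ and the identity collapses to
\be
(q^{-1}u-qv)(q^{-1}-quv)\bigl(s_{11}(u)s_{11}(v)-s_{11}(v)s_{11}(u)\bigr)=0,
\ee
from which the nonzero scalar prefactor in $\bC(q)[[u^{\pm 1},v^{\pm 1}]]$ can be cancelled. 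For Case $(1,2)$, the inverse Gauss formula \eqref{GD2inv} yields $d_2(v)=\tl s_{22}(v)^{-1}$, and since $\{1,1\}\cap\{2,2\}=\emptyset$, Lemma \ref{stslem} immediately gives $[s_{11}(u),\tl s_{22}(v)]=0$, hence $[d_1(u),d_2(v)]=0$.

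Case $(2,2)$ is the main obstacle. Since $d_2(u)=\tl s_{22}(u)^{-1}$, it suffices to prove $[\tl s_{22}(u),\tl s_{22}(v)]=0$, and this cannot be read off directly from \eqref{ref-inv2} because that equation intertwines $\wtl S_2$ with $S_1$ rather than with $\wtl S_1$. One direct route is to invert the matrix identity \eqref{ref2} to obtain a reflection equation purely in $\wtl S$, using \eqref{unitary} and \eqref{cross} to rewrite $R(u,v)^{-1}$ and $(R^t(u^{-1},v))^{-1}$, and then to extract the $E_{22}\otimes E_{22}$ coefficient by a computation parallel to Case $(1,1)$; the bookkeeping of scalar prefactors from the crossing symmetry is the subtle part. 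A cleaner alternative is to invoke the duality isomorphism $\varpi_{q^{-1},2}\colon\rY_{q^{-1}}^{\rm tw}(\fko_2)\to\tY(\fko_2)$ from \eqref{varpi}: the same computation as Case $(1,1)$ applied with $i=j=a=b=2$ in the $q^{-1}$-version yields $[\mathscr s_{22}(u),\mathscr s_{22}(v)]=0$ in $\rY_{q^{-1}}^{\rm tw}(\fko_2)$, while a direct computation using $\mathscr D=D^{-1}$ shows $\varpi_{q^{-1},2}(\mathscr s_{22}(u))=\tl s_{22}(uq^{-2})$, so pulling back and reparametrizing delivers the required commutativity in $\tY(\fko_2)$.
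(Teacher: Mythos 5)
Your proposal is correct, and cases $(1,1)$ and $(1,2)$ coincide with the paper's argument in substance: the paper also reads $[s_{11}(u),s_{11}(v)]=0$ off the $E_{11}\otimes E_{11}$ coefficient of the reflection equation, and its appeal to Corollary \ref{cor:commu-d-e-f} for $[d_1(u),d_2(v)]=0$ ultimately rests on the same Lemma \ref{stslem} that you invoke directly. The only real divergence is case $(2,2)$: the paper disposes of it in one line by applying $\vartheta_1$ to $[d_1(u),d_1(v)]=0$ and citing Corollary \ref{theta} ($d_1(uq)\mapsto d_2(u)$), whereas you rerun the diagonal-entry computation in the $q^{-1}$-algebra to get $[\mathscr s_{22}(u),\mathscr s_{22}(v)]=0$ and transport it through $\varpi_{q^{-1},2}$, using that conjugation by $\mathscr D$ fixes diagonal entries so that $\mathscr s_{22}(u)\mapsto \tl s_{22}(uq^{-2})=\tl d_2(uq^{-2})$. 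Since $\vartheta_1=\varpi_{q^{-1},2}\circ\iota_1\circ\varpi_{q,1}$, your route is essentially the paper's with the $\iota_1\circ\varpi_{q,1}$ half unwound by hand; the paper's version buys brevity and reusability (the same $\vartheta_m$ trick recurs throughout Section \ref{sec:low}), while yours is more self-contained at this spot, at the cost of one extra entry computation. Your first suggested route for $(2,2)$ (inverting the reflection equation via crossing symmetry) is indeed the messier option and you rightly set it aside; the $\varpi$ alternative you give is complete and sound.
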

\begin{proof}
By taking the $(1,1)$-entry (which corresponds to the coefficient of $E_{11}\otimes E_{11}$) of \eqref{ref2}, we have
\[
\Big(\frac{1}{qu}-qv\Big)\Big(\frac{u}{q}-qv\Big)[s_{11}(u),s_{11}(v)]=0.
\]
It follows from \eqref{GD2} that $d_1(u)=s_{11}(u)$. Therefore, $[d_1(u),d_1(v)]=0$. 

Applying $\vartheta_1$ to $[d_1(u),d_1(v)]=0$, we obtain $[d_2(u),d_2(v)]=0$ by Corollary \ref{theta}. The relation $[d_1(u), d_2(v)]=0$ follows from Corollary \ref{cor:commu-d-e-f}.
\end{proof}

\begin{lem}\label{d1efo2lem} 
We have
\beq\label{d1eo2}
\begin{split}
    (1-q^2)&(1-q^2uv)ud_1(u)e(u)-(1-q^2uv)(u-q^2v)d_1(u)e(v)\\
    &=-q^2(u-v)(1-uv)e(v)d_1(u)-q(1-q^2)(u-v)f(u)d_1(u).  
\end{split}
\eeq
and
\beq\label{d1fo2}
\begin{split}
    quv(1-q^2)&(u-v)d_1(u)e(u)+q^2(u-v)(1-uv)d_1(u)f(v)\\
    &=(u-q^2v)(1-q^2uv)f(v)d_1(u)-v(1-q^2)(1-q^2uv)f(u)d_1(u).
\end{split}
\eeq
\end{lem}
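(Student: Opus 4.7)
The plan is to derive both identities directly from the reflection equation~\eqref{ref2}, expressed entry-wise via~\eqref{siju}–\eqref{alphau}, by choosing the four indices $(i,j,a,b)$ so that after Gauss decomposition a common factor of $d_1$ at a single spectral parameter separates cleanly.

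For~\eqref{d1eo2}, I would take $(i,j,a,b)=(1,1,1,2)$. After evaluating the Kronecker deltas in~\eqref{siju}, only the products $s_{11}(u)s_{12}(v)$, $s_{12}(v)s_{11}(u)$, $s_{11}(v)s_{21}(u)$, and $s_{11}(v)s_{12}(u)$ appear. Substituting the rank-$1$ Gauss decomposition $s_{11}(u)=d_1(u)$, $s_{12}(u)=d_1(u)e(u)$, $s_{21}(u)=f(u)d_1(u)$ from~\eqref{GD2}, and invoking Lemma~\ref{lem:ddo2} to commute $d_1(u)$ past $d_1(v)$, each of the four products acquires $d_1(v)$ as a common \emph{left} factor. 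Since $s_{11}^{(0)}=1$, the series $d_1(v)$ is invertible, so this factor cancels; multiplying through by~$q^2$ produces~\eqref{d1eo2}.

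For~\eqref{d1fo2}, I would take the mirrored choice $(i,j,a,b)=(1,2,1,1)$. Now the products $s_{11}(u)s_{21}(v)$, $s_{12}(u)s_{11}(v)$, $s_{21}(u)s_{11}(v)$, and $s_{21}(v)s_{11}(u)$ appear. The identical Gauss-decomposition substitution, together with $[d_1(u),d_1(v)]=0$, now yields $d_1(v)$ as a common \emph{right} factor of every term; after cancelling it and rescaling by $q^2$ one obtains~\eqref{d1fo2}.

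The only obstacle is bookkeeping. One needs to verify auxiliary identities such as
\[
q^2(q^{-1}u-qv)(q^{-1}-quv)=(u-q^2v)(1-q^2uv),\qquad q^2(q^{-1}-q)(q^{-1}-quv)=(1-q^2)(1-q^2uv),
\]
which reconcile the natural $q$-shifted combinations arising from~\eqref{R2}--\eqref{R2t} with the normalization chosen in the statement of the lemma. No conceptual difficulty arises beyond the Gauss decomposition and the commutativity of $d_1(u),d_1(v)$ established in Lemma~\ref{lem:ddo2}; the two chosen index tuples are essentially forced, since any other choice would bring the more complicated entry $s_{22}(u)=d_2(u)+f(u)d_1(u)e(u)$ into the relations and obstruct the common-factor cancellation.
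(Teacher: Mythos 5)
Your argument is correct, and it verifiably reproduces both \eqref{d1eo2} and \eqref{d1fo2}: I checked that the component relation \eqref{siju} with $(i,j,a,b)=(1,1,1,2)$ involves exactly the four products you list, that after substituting \eqref{GD2} and using $[d_1(u),d_1(v)]=0$ every term carries $d_1(v)$ as a left factor, and that cancelling it and multiplying by $q^2$ (your two auxiliary $q$-identities are both correct) gives \eqref{d1eo2} on the nose; the mirrored computation with $(1,2,1,1)$ gives \eqref{d1fo2} with $d_1(v)$ as a right factor. However, your route differs from the paper's. The paper instead takes the $(1,2)$- and $(2,1)$-entries of the \emph{inverse} reflection equation \eqref{ref-inv2}, rewrites them via \eqref{GD2inv} so that $\tl s_{12}(v)=-e(v)\tl d_2(v)$ and $\tl s_{22}(v)=\tl d_2(v)$, and cancels the common right factor $\tl d_2(v)$ using $[d_1(u),\tl d_2(v)]=0$. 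The two approaches buy slightly different things: the paper's choice makes the common factor visible without any index bookkeeping in \eqref{siju}, but it relies on the commutation $[d_1(u),\tl d_2(v)]=0$, which comes from Corollary \ref{cor:commu-d-e-f} and hence from the shift homomorphisms $\vartheta_m$ and the quasi-determinant machinery; your choice needs only the elementary fact $[d_1(u),d_1(v)]=0$ (the first, self-contained part of Lemma \ref{lem:ddo2}, read off from the $(1,1)$-entry of \eqref{ref2}), so it is marginally more self-contained, at the cost of unwinding the deltas in \eqref{siju}--\eqref{alphau}. Your closing observation that any other index choice would drag in $s_{22}(u)=d_2(u)+f(u)d_1(u)e(u)$ and spoil the clean factorization is accurate and explains why the paper resorts to $\wtl S(v)$, where the troublesome corner entry becomes the diagonal factor $\tl d_2(v)$ itself.
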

\begin{proof}
Taking the (1,2)-entry of \eqref{ref-inv2}, we have
\begin{align*}
\Big(\frac{1}{u}-v\Big)&(u-v)\tl s_{12}(v)s_{11}(u)=\Big(\frac{1}{qu}-qv\Big)\Big(\frac{u}{q}-qv\Big)s_{11}(u)\tl s_{12}(v)\\&+\Big(\frac{1}{q}-q\Big)\Big(1-\frac{v}{u}\Big)s_{21}(u)\tl s_{22}(v)+\Big(\frac{1}{q}-q\Big)\Big(\frac{1}{q}-quv\Big)s_{12}(u)\tl s_{22}(v).
\end{align*}
Transforming it in terms of new generating series by \eqref{GD2inv}, we obtain
\begin{align*}
-\Big(\frac{1}{u}-v\Big)&(u-v)e(v)\tl d_2(v)d_1(u)=-\Big(\frac{1}{qu}-qv\Big)\Big(\frac{u}{q}-qv\Big)d_1(u)e(v)\tl d_2(v)\\&+\Big(\frac{1}{q}-q\Big)\Big(1-\frac{v}{u}\Big)f(u)d_1(u)\tl d_2(v)+\Big(\frac{1}{q}-q\Big)\Big(\frac{1}{q}-quv\Big)d_1(u)e(u)\tl d_2(v).
\end{align*}
Noting that $[d_1(u),\tl d_2(v)]=0$, we move $\tl d_2(v)$ to the right. Canceling $\tl d_2(v)$ from right and clearing the denominator, the relation \eqref{d1eo2} is obtained.

The proof of \eqref{d1fo2} is similar by considering the (2,1)-entry.
\end{proof}

\begin{lem}\label{d2efo2lem} 
We have
\beq\label{d2eo2}
\begin{split}
(u-v)(1-q^4uv)&\tl d_2(v)e(u)-q(1-q^2)(u-v)\tl d_2(v)f(v)\\
&=(u-q^2v)(1-q^2uv)e(u)\tl d_2(v)-(1-q^2)(1-q^2uv)ve(v)\tl d_2(v)    
\end{split}
\eeq
and
\beq\label{d2fo2}
\begin{split}
u(1-q^2)(1-q^2uv)&\tl d_2(v)f(v)-(u-q^2v)(1-q^2uv)\tl d_2(v)f(u)\\
&=quv(1-q^2)(u-v)e(v)\tl d_2(v)-(u-v)(1-q^4uv)f(u)\tl d_2(v).   
\end{split}
\eeq
\end{lem}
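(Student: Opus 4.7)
The plan is to mirror the proof of Lemma \ref{d1efo2lem}, but to extract from the reflection equations \eqref{ref2}--\eqref{ref-inv2} the matrix entries that isolate $\tl s_{22}(v)=\tl d_2(v)$ rather than $s_{11}(u)=d_1(u)$. Among the entries of $\wtl S(v)$ in \eqref{GD2inv}, the entry $\tl s_{22}(v)$ is the only single-term expression, which makes it the natural counterpart of $s_{11}(u)$ in the present computation.

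Concretely, for \eqref{d2eo2} I would take the $(4,3)$-entry of \eqref{ref-inv2} (the coefficient of $E_{22}\otimes E_{21}$). On the left-hand side $\wtl S_2(v)R(u,v)S_1(u)R^t(u^{-1},v)$, the factor $\wtl S_2(v)$ contributes only through its row indexed by $(2,2)$, namely $\tl s_{21}(v)=-\tl d_2(v)f(v)$ and $\tl s_{22}(v)=\tl d_2(v)$, while $S_1(u)$ feeds in $s_{11}(u),s_{12}(u),s_{21}(u),s_{22}(u)$ against the nontrivial entries of the two R-matrices. Substituting the Gauss decompositions \eqref{GD2}--\eqref{GD2inv} and using the commutativity $[d_i(u),d_j(v)]=0$ from Lemma \ref{lem:ddo2} together with $[d_1(u),\tl d_2(v)]=0$ from Corollary \ref{cor:commu-d-e-f}, I can push all $d_i(u)$-factors to the right of every $\tl d_2(v)$-factor; a common factor of $d_i(u)$ can then be cancelled on each side, and clearing the remaining scalar denominator should produce \eqref{d2eo2}. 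The identity \eqref{d2fo2} is obtained in exactly the same way by instead taking the $(3,4)$-entry of \eqref{ref-inv2}, which isolates $\tl d_2(v)$ paired against $s_{11}(u),s_{12}(u),s_{21}(u),s_{22}(u)$ on the opposite tensor slot.

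The main obstacle will be controlling the cross terms produced by the non-trivial Gauss expansion $s_{22}(u)=d_2(u)+f(u)d_1(u)e(u)$: both sides of the chosen matrix entry split into a pure $d_2(u)$-piece — which will directly match the asserted identity modulo an overall $d_2(u)$-factor — together with a remainder involving products such as $f(u)d_1(u)e(u)\cdot\tl d_2(v)$ and their reorderings. To show that this remainder cancels, I expect to have to invoke the rank-one relations \eqref{d1eo2}--\eqref{d1fo2} already established in Lemma \ref{d1efo2lem} to rewrite $d_1(u)e(u)\tl d_2(v)$ and $f(u)d_1(u)\tl d_2(v)$ in a form compatible with the pure-$d_2$ piece, after which the extraneous contributions should collapse. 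As in Lemma \ref{d1efo2lem} the computation itself is mechanical but scalar-heavy, and the careful bookkeeping of the rational coefficients in $u,v,q$ is where I expect the proof to demand the most attention.
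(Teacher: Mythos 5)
Your overall strategy---extract a suitable entry of the reflection equation \eqref{ref-inv2}, rewrite it via the Gauss decompositions \eqref{GD2}--\eqref{GD2inv}, invoke Lemma \ref{d1efo2lem} to reorder terms, and cancel an invertible common factor---is exactly the paper's. However, the concrete key step, namely your choice of matrix entries, does not work, and this is not a bookkeeping issue but a structural one. Take your proposed $(4,3)$-entry (coefficient of $E_{22}\otimes E_{21}$) of \eqref{ref-inv2}. On the left-hand side $\wtl S_2(v)R(u,v)S_1(u)R^t(u^{-1},v)$, the fourth row of $\wtl S_2(v)$ is $(0,0,\tl s_{21}(v),\tl s_{22}(v))$ and the third column of $R^t(u^{-1},v)$ is $(0,0,u-v,0)^T$, forcing the third column of $S_1(u)$, i.e.\ only $s_{12}(u)$ and $s_{22}(u)$; chasing the zeros of $R(u,v)$ one finds the entire left-hand side collapses to the single term $(u-v)^2\,\tl s_{21}(v)s_{22}(u)$, while the right-hand side involves $s_{12}(u)\tl s_{11}(v)$, $s_{21}(u)\tl s_{11}(v)$ and $s_{22}(u)\tl s_{21}(v)$. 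The product $\tl s_{22}(v)s_{12}(u)=\tl d_2(v)d_1(u)e(u)$ never appears, so this entry cannot produce \eqref{d2eo2}, whose content is precisely the commutation of $\tl d_2(v)$ past $e(u)$. Worse, both $s_{22}(u)=d_2(u)+f(u)d_1(u)e(u)$ and $\tl s_{11}(v)=\tl d_1(v)+e(v)\tl d_2(v)f(v)$ enter, so the identity you would obtain is a genuinely quadratic relation among root-vector series, not a linear one; the cross terms you hope to cancel via \eqref{d1eo2}--\eqref{d1fo2} do not cancel but constitute the actual content of that entry. The same objection applies to the $(3,4)$-entry for \eqref{d2fo2}.

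The entries that work are $(2,4)$ for \eqref{d2eo2} and $(4,2)$ for \eqref{d2fo2}. The $(2,4)$-entry of \eqref{ref-inv2} produces a relation involving only $\tl s_{22}(v)s_{12}(u)$, $\tl s_{21}(v)s_{11}(u)$, $\tl s_{22}(v)s_{21}(u)$, $s_{11}(u)\tl s_{12}(v)$ and $s_{12}(u)\tl s_{22}(v)$---every factor is a single product in the Gauss decomposition, and each term contains exactly one $d_1(u)$ and one $\tl d_2(v)$ together with at most one $e$ or $f$. One then uses \eqref{d1fo2} to rewrite $f(v)d_1(u)$ and $f(u)d_1(u)$ with $d_1(u)$ on the left, moves $d_1(u)$ past $\tl d_2(v)$ using $[d_1(u),\tl d_2(v)]=0$, and cancels $d_1(u)$ from the left to obtain \eqref{d2eo2}; the identity \eqref{d2fo2} follows symmetrically from the $(4,2)$-entry and \eqref{d1eo2}. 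You correctly anticipated the role of Lemma \ref{d1efo2lem} and of the cancellation of $d_1(u)$, but without the correct choice of entries the argument as proposed does not go through.
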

\begin{proof}
Comparing the (2,4)-entries from \eqref{ref-inv2}, we have
\begin{align*}
(u-v)\Big(\frac{1}{qu}-qv\Big)\tl s_{22}(v)&s_{12}(u)+\Big(\frac{1}{q}-q\Big)\Big(\frac{1}{q}-\frac{qv}{u}\Big)\tl s_{21}(v)s_{11}(u)+ \Big(\frac{1}{q}-q\Big)^2\frac{v}{u}\tl s_{22}(v)s_{21}(u) \\
&=\Big(\frac{1}{q}-q\Big)\Big(\frac{1}{u}-v\Big)vs_{11}(u)\tl s_{12}(v)+\Big(\frac{u}{q}-qv\Big)\Big(\frac{1}{u}-v\Big)s_{12}(u)\tl s_{22}(v).
\end{align*}
In terms of the new generating series, it is equivalent to
\begin{align*}
(u-v)\Big(\frac{1}{qu}-qv\Big)&\tl d_2(v)d_1(u)e(u)-\Big(\frac{1}{q}-q\Big)\Big(\frac{1}{q}-\frac{qv}{u}\Big)\red{\tl d_{2}(v)f(v)d_1(u)}\\
&\hskip 3.7cm+ \Big(\frac{1}{q}-q\Big)^2\frac{v}{u}\red{\tl d_2(v)f(u)d_1(u)} \\
&=-\Big(\frac{1}{q}-q\Big)\Big(\frac{1}{u}-v\Big)vd_1(u)e(v)\tl d_{2}(v)+\Big(\frac{u}{q}-qv\Big)\Big(\frac{1}{u}-v\Big)d_1(u)e(u)\tl d_{2}(v).
\end{align*}
We further use \eqref{d1fo2} to move $d_1(u)$ in the 2-nd and 3-rd terms of the left-hand side from right to the left. Then we are able to cancel $d_1(u)$ from left. Simplifying the relation, one verifies \eqref{d2eo2}.

The relation \eqref{d2fo2} is proved similarly by using the (4,2)-entries and \eqref{d1eo2}.
\end{proof}

In order to see a symmetric form, we introduce the following generating series,
\beq\label{efhdef}
\mathcal E(u)=e(uq^{-1}),\qquad \mc F(u)=qf(u^{-1}q^{-1}),\quad \mc H(u)=\tl d_1(uq^{-1})d_2(uq^{-1}).
\eeq

We deduce the following corollary of Lemmas \ref{d1efo2lem} and \ref{d2efo2lem}.
\begin{lem}\label{dblemo2}
We have
\begin{align*}
d_1(u)\big(\mc E(v)-\mc F(v)\big)&=\frac{(1-q^{-1}uv)(1-quv^{-1})}{(1-quv)(1-q^{-1}uv^{-1})}\big(\mc E(v)-\mc F(v)\big)d_1(u),\\
\tl d_2(u)\big(\mc E(v)-\mc F(v)\big)&=\frac{(1-quv)(1-q^3uv^{-1})}{(1-q^3uv)(1-quv^{-1})}\big(\mc E(v)-\mc F(v)\big)\tl d_2(u).
\end{align*}
\end{lem}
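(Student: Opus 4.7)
The plan is to obtain each of the two commutation relations by taking a carefully chosen linear combination of the two identities in the relevant preceding lemma, after shifting the arguments of $e$ and $f$ so that $\mc E(v)$ and $\mc F(v)$ appear directly.

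For the first identity, I would start by substituting $v\mapsto vq^{-1}$ in \eqref{d1eo2} and $v\mapsto v^{-1}q^{-1}$ in \eqref{d1fo2}. Using $\mc E(v)=e(vq^{-1})$ and $\mc F(v)=qf(v^{-1}q^{-1})$, these become identities involving, respectively, $d_1(u)e(u)$, $d_1(u)\mc E(v)$, $\mc E(v)d_1(u)$, $f(u)d_1(u)$ and $d_1(u)e(u)$, $d_1(u)\mc F(v)$, $\mc F(v)d_1(u)$, $f(u)d_1(u)$. I would then form the linear combination $1\cdot(\text{shifted \eqref{d1eo2}})+v^2q\cdot(\text{shifted \eqref{d1fo2}})$. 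The multiplier $v^2q$ is the unique scalar for which the $d_1(u)e(u)$ terms cancel; a direct check shows that with the same multiplier the $f(u)d_1(u)$ terms also vanish, and that the coefficients of $d_1(u)\mc E(v)$ and $d_1(u)\mc F(v)$ then agree in magnitude but have opposite signs (and similarly for $\mc E(v)d_1(u)$ versus $\mc F(v)d_1(u)$). What survives is
\begin{equation*}
(1-quv)(u-qv)\,d_1(u)\bigl(\mc E(v)-\mc F(v)\bigr)=(qu-v)(q-uv)\,\bigl(\mc E(v)-\mc F(v)\bigr)d_1(u),
\end{equation*}
and elementary simplification of the ratio $(qu-v)(q-uv)/[(1-quv)(u-qv)]$ reproduces the factor $(1-q^{-1}uv)(1-quv^{-1})/[(1-quv)(1-q^{-1}uv^{-1})]$.

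For the second identity, I would first swap $u\leftrightarrow v$ in \eqref{d2eo2} and \eqref{d2fo2} so that $\tl d_2(u)$ rather than $\tl d_2(v)$ appears, and then apply the analogous substitutions $v\mapsto vq^{-1}$ and $v\mapsto v^{-1}q^{-1}$. A linear combination (with an analogous single scalar playing the role of $v^2q$) simultaneously eliminates the $e(u)\tl d_2(u)$ and $\tl d_2(u)f(u)$ terms, leaving the claimed commutation with the stated rational factor.

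The only nontrivial point beyond routine bookkeeping is the double cancellation: one multiplier has to kill two distinct unwanted terms at once. This is an algebraic identity among the rational coefficients of \eqref{d1eo2}--\eqref{d2fo2} (which I would verify by direct expansion), and its validity reflects the fact that $\mc E(v)-\mc F(v)$ is the natural $\imath$-analogue of a Drinfeld current — it is essentially $\bB_1(v)$ of \eqref{bdef} up to normalisation — so the present lemma is the $d$-to-current commutation that one expects to find en route to the full Drinfeld presentation. Without the specific combination $\mc E(v)-\mc F(v)$, no single scalar multiplier would achieve both cancellations and additional correction terms would appear on both sides.
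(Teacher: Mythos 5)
Your proposal is correct and is essentially the paper's own proof: the paper likewise substitutes $v\mapsto vq^{-1}$ in \eqref{d1eo2} and $v\mapsto v^{-1}q^{-1}$ in \eqref{d1fo2} and adds the results (your multiplier $qv^2$ is exactly the normalization the paper builds into its displayed form of the substituted \eqref{d1fo2}, so that both the $d_1(u)e(u)$ and $f(u)d_1(u)$ terms cancel simultaneously), arriving at $(1-quv)(u-qv)\,d_1(u)(\mc E(v)-\mc F(v))=(qu-v)(q-uv)\,(\mc E(v)-\mc F(v))d_1(u)$. Your treatment of the second identity via \eqref{d2eo2}--\eqref{d2fo2} (after swapping $u\leftrightarrow v$) is the ``similar calculation'' the paper alludes to, and the same multiplier $qv^2$ indeed performs the double cancellation there as well.
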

\begin{proof}
 Substituting $v\to vq^{-1}$ in \eqref{d1eo2} and $v\to v^{-1}q^{-1}$ in \eqref{d1fo2}, we have
\begin{align*}
    (1-q^2)(1-quv)&ud_1(u)e(u)+(1-q^2)(qu-v)f(u)d_1(u)\\&=(1-quv)(u-qv)d_1(u)\mc E(v)-(qu-v)(q-uv)\mc E(v)d_1(u),
\end{align*}
\begin{align*}
    (1-q^2)(quv-1)&ud_1(u)e(u)+(1-q^2)(v-qu)f(u)d_1(u)\\&=(qu-v)(q-uv)\mc F(v)d_1(u)-(1-quv)(u-qv)d_1(u)\mc F(v).
\end{align*}
Adding two equations above, the l.h.s. is canceled and one easily verifies the first relation. The second relation follows from a similar calculation using Lemma \ref{d1efo2lem}.
\end{proof}

We shall also need the relations between $e(u)$ and $e(v)$, $e(u)$ and $f(v)$, $f(u)$ and $f(v)$.

\begin{lem}\label{lem:efo2}
We have
\begin{align}
\begin{split}
(q^2u&-v)\E(u)\E(v)+(q^2v-u)\E(v)\E(u)\label{eeo2}\\
&=\frac{q(1-q^2)(u-v)}{1-uv}\big(\mc H(u)-\mc H(v)\big)-(1-q^2)\big(u\E(u)\E(u)+v\E(v)\E(v)\big),
\end{split}\\
\begin{split}\label{efo2}
(q^2u&-v)\E(u)\F(v)+(q^2v-u)\F(v)\E(u)\\
&=\frac{q(1-q^2)(u-v)}{1-uv}\big(\mc H(u)-\mc H(v^{-1})\big)-(1-q^2)\big(u\E(u)\E(u)+v\F(v)\F(v)\big),
\end{split}\\
\begin{split}\label{ffo2}
(q^2u&-v)\F(u)\F(v)+(q^2v-u)\F(v)\F(u)\\
&=\frac{q(1-q^2)(u-v)}{1-uv}\big(\mc H(u^{-1})-\mc H(v^{-1})\big)-(1-q^2)\big(u\F(u)\F(u)+v\F(v)\F(v)\big).
\end{split}
\end{align}
\end{lem}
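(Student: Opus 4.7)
The plan is to derive each of \eqref{eeo2}, \eqref{efo2}, \eqref{ffo2} by the same three-step template already used for Lemmas \ref{d1efo2lem}--\ref{d2efo2lem}: extract an appropriate matrix coefficient of the reflection equation \eqref{ref2} (or its inverse form \eqref{ref-inv2}) that is bilinear in the right pair of $s_{ij}$'s; rewrite using the Gauss decomposition \eqref{GD2}, \eqref{GD2inv}; then use the previously established intertwining relations to bring all diagonal factors $d_1$, $\tl d_2$ to one side and cancel them.

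For \eqref{eeo2}, I would start from an entry of \eqref{ref2} whose matrix coefficient involves only $s_{11}$ and $s_{12}$ (so that after substituting $s_{11}(u)=d_1(u)$, $s_{12}(u)=d_1(u)e(u)$ every term carries a left-factor built from $d_1(u), d_1(v)$, modulo $d_1\cdot f$-corrections generated by the $R^t$-part of the reflection equation). Applying Lemma \ref{d1efo2lem} to move the inner $d_1(v)$ past $e(u)$ and $f(u)$, and Lemma \ref{lem:ddo2} to interchange $d_1(u)$ with $d_1(v)$, one extracts $d_1(u)d_1(v)$ as a common left factor, which is invertible and may be cancelled. Translating the resulting identity via \eqref{efhdef} and grouping the diagonal pieces using Lemma \ref{dblemo2} then yields \eqref{eeo2}. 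The right-hand side corrections $(1-q^2)\bigl(u\E(u)\E(u) + v\E(v)\E(v)\bigr)$ are precisely the residues of the nonlinear contributions specific to the reflection equation, which have no counterpart in the ordinary RTT setting of $\mathrm{U}_q(\hgl_n)$.

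The derivations of \eqref{ffo2} and \eqref{efo2} follow the same blueprint, mutatis mutandis. For \eqref{ffo2} I would use an entry of \eqref{ref2} bilinear in $s_{21}=f\,d_1$ and cancel the common $d_1$ factors using Lemma \ref{d1efo2lem} again. For the mixed relation \eqref{efo2} it is most convenient to work with the inverted equation \eqref{ref-inv2} and an entry pairing $s_{12}(u)$ with $\tl s_{12}(v) = -e(v)\tl d_2(v)$; both Lemma \ref{d1efo2lem} and Lemma \ref{d2efo2lem} then enter the picture to align the $d_1(u)$ and $\tl d_2(v)$ factors before cancellation, and the substitution $v\mapsto v^{-1}q^{-1}$ baked into $\F(v)$ in \eqref{efhdef} produces the $\mc H(v^{-1})$ on the right-hand side.

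The main obstacle will be the algebraic bookkeeping: after substitution each identity becomes a sum of roughly a dozen cubic and quartic monomials in $d, e, f$, and the commutation moves from Lemmas \ref{d1efo2lem}--\ref{dblemo2} must be sequenced carefully so that all diagonal factors line up for cancellation without creating spurious remainder terms. Matching the precise rational coefficient $q(1-q^2)(u-v)/(1-uv)$ of the $\mc H$-term ultimately amounts to a rank-one rational identity in $u,v,q$ that must fall out of the $R$- and $R^{t}$-prefactors in \eqref{R2}--\eqref{R2t} after the shifts $u\to uq^{-1}$, $v\to v^{-1}q^{-1}$ encoded in \eqref{efhdef}; this check is finite and mechanical, but not short.
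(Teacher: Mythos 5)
Your outline follows essentially the same route as the paper's proof: take a single matrix entry of \eqref{ref2} or \eqref{ref-inv2} that is bilinear in the appropriate $s_{ij}$'s, substitute the Gauss decomposition \eqref{GD2}--\eqref{GD2inv}, use Lemma \ref{d1efo2lem} (and, for the mixed case, Lemma \ref{d2efo2lem}) to pull the inner diagonal factors outward, cancel the invertible outer factors, and translate via \eqref{efhdef}. Concretely, the paper uses the $(1,4)$-entry of \eqref{ref2} for \eqref{eeo2}, the $(2,3)$-entry of \eqref{ref-inv2} for \eqref{efo2}, and the $(4,1)$-entry of \eqref{ref2} for \eqref{ffo2}. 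Two small corrections to your sketch. First, the entry giving \eqref{eeo2} cannot involve ``only $s_{11}$ and $s_{12}$'': it necessarily brings in $s_{11}(u)s_{22}(v)$, and it is the $d_2(v)$ summand of $s_{22}(v)=d_2(v)+f(v)d_1(v)e(v)$ (after left-multiplication by $\tl d_1(u)\tl d_1(v)$) that produces the $\mc H(u)-\mc H(v)$ term; Lemma \ref{dblemo2} plays no role here. Second, for the mixed relation \eqref{efo2} you must pair $s_{12}(u)=d_1(u)e(u)$ with $\tl s_{21}(v)=-\tl d_2(v)f(v)$, not with $\tl s_{12}(v)=-e(v)\tl d_2(v)$: the latter carries the $e$-current and would reproduce an $\E$--$\E$ type identity rather than one in $\E(u)\F(v)$. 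With these adjustments your plan coincides with the paper's argument.
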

\begin{proof}
We first prove \eqref{eeo2}. It follows from \eqref{ref2} (the (1,4)-entry) that
\begin{align*}
 \Big(\frac{1}{u}-v\Big)s_{12}(u)s_{12}(v)&+ \Big(\frac{1}{q}-q\Big)\frac{1}{u}s_{11}(u)s_{22}(v)\\
 &=\Big(\frac{1}{u}-v\Big)s_{12}(v)s_{12}(u)+ \Big(\frac{1}{q}-q\Big)\frac{1}{u}s_{11}(v)s_{22}(u).
\end{align*}
Hence we have
\begin{align*}
 (1-uv)&d_1(u)e(u)d_1(v)e(v)+ \Big(\frac{1}{q}-q\Big)d_1(u)\big(d_2(v)+f(v)d_1(v)e(v)\big)\\
 &=(1-uv)d_1(v)e(v)d_1(u)e(u)+ \Big(\frac{1}{q}-q\Big)d_1(v)\big(d_2(u)+f(u)d_1(u)e(u)\big),
\end{align*}
which is equivalent to
\begin{align*}
 d_1(u)\Big((1-uv)&e(u)d_1(v)+\Big(\frac{1}{q}-q\Big)f(v)d_1(v)\Big)e(v)+ \Big(\frac{1}{q}-q\Big)d_1(u)d_2(v)\\
 &=d_1(v)\Big((1-uv)e(v)d_1(u)+\Big(\frac{1}{q}-q\Big)f(u)d_1(u)\Big)e(u)+ \Big(\frac{1}{q}-q\Big)d_1(v)d_2(u).
\end{align*}
Use \eqref{d1eo2} to move $d_1(v)$ (resp. $d_1(u)$) in
\[
(1-uv)e(u)d_1(v)+\Big(\frac{1}{q}-q\Big)f(v)d_1(v)
\]
(resp. the same formula with $u$ and $v$ interchanged) to the left. Multiplying $\tl d_1(u)\tl d_1(v)$ from left and using the substitution \eqref{efhdef}, the relation \eqref{eeo2} follows. 

The relation \eqref{efo2} can be proved using the (2,3)-entry of \eqref{ref-inv2} while the relation \eqref{ffo2} can be proved similarly using the (4,1)-entry of \eqref{ref2}.
\end{proof}

Next we work out the relations for $\tY(\so_2)$. Recall the generating series, \eqref{bdef}-\eqref{hdef}. Here we drop the index $i$ as the rank is 1.

\begin{prop}\label{sx2l}
We have the following relations in $\tY(\so_2)$ in generating series:
\begin{align}
&\bTH(u)\bTH(v) =\bTH(v)\bTH(u),\label{hhu}\\
&\bTH(u)\bB(v) =\frac{(1-q^{-2}uv^{-1})(1-q^2uv)}{(1-q^2uv^{-1})(1-q^{-2}uv)}\bB(v)\bTH(u),\label{hbu}\\
&(q^2u-v) \bB(u) \bB(v) +(q^2v-u) \bB(v) \bB(u)=\frac{1}{q-q^{-1}} \delta(uv) (u-v)\big(\bTH(v)-\bTH(u) \big).\label{bbu}
\end{align}
\end{prop}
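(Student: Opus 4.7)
The plan is to reduce each of the three identities \eqref{hhu}--\eqref{bbu} to the Gauss-decomposition lemmas of Section \ref{sec:Gauss} via the translations
\begin{equation*}
\bTH(u) = \mc H(u^{-1}), \qquad \bB(u) = \frac{q^{-1}(\mc F(u) - \mc E(u))}{q-q^{-1}}
\end{equation*}
between the rank-one generators $\bTH, \bB$ and the intermediate series of \eqref{efhdef}.

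\textbf{Step 1 (relation \eqref{hhu}).} Since $\mc H(u)=\tl d_1(uq^{-1})d_2(uq^{-1})$ is a product of commuting series, the commutativity of $\bTH(u)=\mc H(u^{-1})$ with $\bTH(v)=\mc H(v^{-1})$ follows immediately from Lemma \ref{lem:ddo2}.

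\textbf{Step 2 (relation \eqref{hbu}).} First I invert the first identity of Lemma \ref{dblemo2} to extract the commutation of $\tl d_1(u)$ past $\mc E(v)-\mc F(v)$, and then combine it (using that $\tl d_1(u)$ and $d_2(u)$ commute by Lemma \ref{lem:ddo2}) with the second identity of Lemma \ref{dblemo2}. The two rational prefactors multiply and the middle factors $(1-quv)$ and $(1-quv^{-1})$ conveniently cancel, yielding a single commutation formula for $\tl d_1(u)d_2(u)$ past $\mc E(v)-\mc F(v)$. Successive substitutions $u\mapsto uq^{-1}$ and $u\mapsto u^{-1}$, followed by a short simplification of the rational factor (multiplying numerator and denominator by $u^2$), reproduce exactly the right-hand side of \eqref{hbu}.

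\textbf{Step 3 (relation \eqref{bbu}).} This is the heart of the argument. Expand
\begin{equation*}
(q^2u-v)\bB(u)\bB(v) + (q^2v-u)\bB(v)\bB(u)
\end{equation*}
as a linear combination of the eight ordered bilinear products in $\mc E(u), \mc E(v), \mc F(u), \mc F(v)$, and substitute using Lemma \ref{lem:efo2}: namely \eqref{eeo2} for the $\mc E\mc E$ piece, \eqref{ffo2} for the $\mc F\mc F$ piece, \eqref{efo2} for the pair $\{\mc E(u)\mc F(v), \mc F(v)\mc E(u)\}$, and the $u\leftrightarrow v$-swapped version of \eqref{efo2} for the remaining pair $\{\mc E(v)\mc F(u), \mc F(u)\mc E(v)\}$. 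The contributions involving $\mc H$ and the quadratic corrections $\mc E(u)^2, \mc F(u)^2$ are designed to cancel pairwise as formal algebraic expressions. The $\delta(uv)$-term on the right-hand side of \eqref{bbu} arises from the expansion-dependent interpretation of the rational factors $(1-uv)^{-1}$ (and $(u-v)^{-1}$) used in deriving Lemma \ref{lem:efo2}: since the four bilinear products naturally live in different ``quadrants'' of the $(u,v)$ formal-series plane, the three instances of $(1-uv)^{-1}$ must be expanded in different admissible regions, and the difference between any two such expansions is precisely $\delta(uv)$, producing the desired delta-function term.

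The main obstacle is Step 3: the algebraic bookkeeping, while systematic, must be performed simultaneously with careful tracking of formal-series expansion conventions, in the same spirit as the standard derivation of Drinfeld current relations from R-matrix presentations in the quantum affine setting. Steps 1 and 2 are essentially routine translations of the corresponding Gauss-decomposition lemmas.
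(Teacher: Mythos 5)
Your proposal is correct and takes essentially the same route as the paper, whose proof of this proposition consists precisely of citing Lemmas \ref{lem:ddo2}, \ref{dblemo2}, and \ref{lem:efo2} for \eqref{hhu}, \eqref{hbu}, and \eqref{bbu} respectively; you have simply supplied the translation $\bTH(u)=\mc H(u^{-1})$, $\bB(u)=\tfrac{q^{-1}(\mc F(u)-\mc E(u))}{q-q^{-1}}$ and the resulting bookkeeping (cancellation of the rational prefactors in Step 2, pairwise cancellation of the $\mc H$- and quadratic terms plus the expansion-region origin of $\delta(uv)$ in Step 3) that the paper leaves implicit.
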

\begin{proof}
The relations \eqref{hhu}, \eqref{hbu}, \eqref{bbu} follow from Lemmas \ref{lem:ddo2}, \ref{dblemo2}, \ref{lem:efo2}, respectively. 
\end{proof}

\subsection{The rank 2 case}

In the rank 2 case (i.e., $n=3$), we have the following expressions with ``$(u)$" omitted for the matrices $S(u)$ and $\tl S(u)$:
\begin{align} \label{SS3}
\begin{split}
    S&=\begin{bmatrix}
d_1 & d_1e_1& d_1e_{13}\\
f_1d_1 & d_2+f_1d_1e_1& d_2e_2+f_1d_1e_{13}\\
f_{31}d_1 & f_2d_2+f_{31}d_1e_{1} & d_3+f_2d_2e_2+f_{31}d_1e_{13}
\end{bmatrix},
\\
\wtl S&=\begin{bmatrix}
\tl d_1 +e_1\tl d_2 f_1+\tl e_{13}\tl d_3 \tl f_{31} & -e_1\tl d_2-\tl e_{13}\tl d_3 f_2& \tl e_{13}\tl d_3\\
-\tl d_2f_1-e_2\tl d_3 \tl f_{31} & \tl d_2+e_2\tl d_3f_1& -e_2\tl d_3\\
\tl d_3\tl f_{31} & -\tl d_3f_2 & \tl d_3
\end{bmatrix},
\end{split}
\end{align}
where $\tl e_{ij}$ and $\tl f_{ji}$ are defined in \eqref{eq:def-tilde-e-f}.

We introduce the following generating series,
\beq\label{efhdefo3}
\mc E_i(u)=e_i(uq^{-i}),\qquad \mc F_i(u)=qf_i(u^{-1}q^{-i}),\qquad \mc H_i(u)=\tl d_i(uq^{-i})d_{i+1}(uq^{-i}).
\eeq

\begin{lem}\label{o3eeff}
We have
\begin{align*}
(q^{-1}u-v)&\mc E_1(u)\mc E_2(v)+(q^{-1}v-u)\mc E_2(v)\mc E_1(u)\\
&=\big(q^{-1}-q\big)\big(q^{-1}v\mc E_1(vq^{-1})\mc E_2(v)-q^{-1}ve_{13}(vq^{-2})+ue_{13}(uq^{-1})\big),\\
(q^{-1}u-v)&\mc F_1(u)\mc F_2(v)+(q^{-1}v-u)\mc F_2(v)\mc F_1(u)\\
&=\big(q^{-1}-q\big)\big(v\mc F_2(v)\mc F_1(vq)-q^{2}vf_{31}(v^{-1}q^{-2})+quf_{31}(u^{-1}q^{-1})\big).
\end{align*}
\end{lem}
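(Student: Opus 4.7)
The plan is to follow the same template as Lemmas~\ref{d1efo2lem}--\ref{lem:efo2}: select appropriate entries of the reflection equation (either \eqref{ref2} or its inverse form \eqref{ref-inv2}, suitably adapted to $n=3$), expand every $s_{ij}(u),\wtl s_{ij}(v)$ via the rank-2 Gauss decomposition \eqref{SS3}, and then use the already-established commutations among $d_i$, $e_j$, $f_j$ to cancel the Cartan-series $d_i$ from both sides and isolate a relation purely in the $e$- or $f$-generators.

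For the first identity, I would single out an entry whose expansion naturally involves the pair $s_{12}(u),s_{23}(v)$ together with $s_{13}$-type contributions -- for instance the coefficient of $E_{12}\otimes E_{23}$ (or the equivalent in the $\wtl S_2 R S_1 R^t$ form), since \eqref{SS3} gives $s_{23}=d_2e_2+f_1d_1e_{13}$ and $s_{13}=d_1e_{13}$, so this single entry already produces the three shapes $e_1e_2$, $e_2e_1$, $e_{13}$ that appear in the target formula. After the substitution, the Cartan factors $d_1(u), d_1(v), d_2(u), d_2(v)$ can be transported to one side using Corollary~\ref{cor:commu-d-e-f} (which handles non-adjacent indices freely) and the rank-1 identities of Lemmas~\ref{d1efo2lem}--\ref{d2efo2lem} applied under the embeddings $\jmath$ and $\vartheta_1$; the remaining left-multiplication by $\tl d_1(u)\tl d_1(v)$ (or the appropriate product) cancels the $d$'s and leaves an identity in $e_1,e_2,e_{13}$ only. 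Re-indexing by $u\mapsto uq^{-1}$, $v\mapsto vq^{-2}$ converts $e_i$-series to the $\mc E_i$-notation of \eqref{efhdefo3} and yields the first formula.

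The second identity is obtained by the mirror procedure: pick an entry whose expansion produces $s_{21}(u),s_{32}(v),s_{31}$ (for example the $E_{21}\otimes E_{32}$ coefficient), use $s_{32}=f_2d_2+f_{31}d_1e_1$ and $s_{31}=f_{31}d_1$ to bring out $f_1f_2$, $f_2f_1$ and $f_{31}$ terms, and argue exactly as above; alternatively, one can invoke the antipode-type isomorphism $\varpi_{q,n}$ of \eqref{varpi} together with the first identity, but the direct computation seems more transparent.

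The main obstacle is bookkeeping. The $e_{13}$ corrections on the right-hand side enter through two independent channels -- directly from $s_{13}(u)=d_1(u)e_{13}(u)$ and indirectly from the $f_1d_1e_{13}$ cross term in $s_{23}(v)$ -- and the scalar coefficients must be matched carefully after the $d$-series are moved past the $e$-series (each such transport contributes rational factors by the rank-1 analogues of \eqref{d1eo2}--\eqref{d1fo2}). One has to organize the calculation so that the products $d_1(u)d_1(v)$ (resp. $d_1(u^{-1})d_1(v^{-1})$) can be factored out uniformly; once this reduction is performed, the remaining identity in the $e$- (resp. $f$-) variables is a short algebraic check.
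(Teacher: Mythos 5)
Your plan is in the right family (reflection equation entry plus Gauss decomposition), and you even name the correct entry $E_{12}\otimes E_{23}$ for the first identity, but the mechanism you describe is not the one that makes the computation close, and the step you defer to a ``short algebraic check'' is in fact the entire content of the lemma. The paper's proof takes the coefficient of $E_{12}\otimes E_{23}$ in the \emph{mixed} equation \eqref{ref-inv2} and expands the $v$-side via the \emph{inverse} Gauss data from \eqref{SS3}: $\tl s_{23}=-e_2\tl d_3$, $\tl s_{13}=\tl e_{13}\tl d_3$, $\tl s_{33}=\tl d_3$, while the $u$-side uses $s_{1j}(u)=d_1(u)e_{1j}(u)$. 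Every term then carries exactly one $d_1(u)$ at the far left and one $\tl d_3(v)$ at the far right, and these slide past the intervening currents using only $[d_1(u),e_2(v)]=[\tl d_3(v),e_1(u)]=0$ from Corollary \ref{cor:commu-d-e-f} (non-adjacent indices), so they cancel with \emph{no} rational-function corrections; the identity $\tl e_{13}=e_1e_2-e_{13}$ is what produces the $e_1(v)e_2(v)$ and $e_{13}$ terms on the right-hand side. No rank-1 $d$--$e$ commutation relations are needed at all.

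Your version instead expands $s_{23}(v)=d_2(v)e_2(v)+f_1(v)d_1(v)e_{13}(v)$ from the direct decomposition. This forces you to commute $d_2(v)$ past $e_1(u)$, which are \emph{adjacent} indices, so Corollary \ref{cor:commu-d-e-f} does not apply and you must invoke the rank-1 relations \eqref{d1eo2}--\eqref{d2fo2} under $\jmath$ and $\vartheta_1$; each such transport generates rational factors in $u,v$ and additional current terms, and the cross term drags in $f_1(v)$, which does not appear in the target identity and whose cancellation you never exhibit. You correctly flag this bookkeeping as the main obstacle, but you neither resolve it nor notice the device that eliminates it, so as written the proposal has a genuine gap: it is not shown (and is not at all obvious) that your route terminates in the stated clean form. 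The fix is precisely the paper's choice of which side of the reflection equation to invert. (For the second identity the paper mirrors this with the coefficient of $E_{32}\otimes E_{21}$, using the first column of $\wtl S$; your suggested $E_{21}\otimes E_{32}$ is not the entry with the analogous clean structure.)
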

\begin{proof}
Taking the coefficients of $E_{12}\otimes E_{23}$ in \eqref{ref-inv2}, we find that
\begin{align*}
   (u-v)\tl s_{23}(v)s_{12}(u)&=\Big(\frac{1}{q}-q\Big)vs_{11}(u)\tl s_{13}(v)\\&+\Big(\frac{u}{q}-qv\Big)s_{12}(u)\tl s_{12}(v)+\Big(\frac{1}{q}-q\Big)us_{13}(u)\tl s_{33}(v).
\end{align*}
Rewrite it in terms of new generating series via \eqref{SS3}. Note that by Corollary \ref{cor:commu-d-e-f} we have
\[
[d_1(u),e_2(v)]=[\tl d_3(v),e_1(u)]=0.
\]
Thus we can move all $d_1(u)$ to the left and all $\tl d_3(v)$ to the right. Canceling $d_1(u)$ from left and $\tl d_3(v)$ from right, we obtain that
\begin{align*}
\Big(\frac{u}{q}-qv\Big)e_1(u)e_2(v)&+(v-u)e_2(v)e_1(u)\\&=\Big(\frac{1}{q}-q\Big)\big(ve_1(v)e_2(v)-ve_{13}(v)+ue_{13}(u)\big).
\end{align*}
Then the first relation follows from \eqref{efhdefo3}.

The proof of the second relation is similar by taking the coefficients of $E_{32}\otimes E_{21}$.
\end{proof}

\begin{lem}\label{o3eeff2}
We have
\begin{align*}
(q^{-1}u-v)&\mc E_1(u)\mc F_2(v)+(q^{-1}v-u)\mc F_2(v)\mc E_1(u)\\
&=\big(q^{-1}-q\big)\big(v\mc F_2(v)\mc F_1(vq)-q^{2}vf_{31}(v^{-1
}q^{-2})+ue_{13}(uq^{-1})\big),\\
(q^{-1}u-v)&\mc F_1(u)\mc E_2(v)+(q^{-1}v-u)\mc E_2(v)\mc F_1(u)\\
&=\big(q^{-1}-q\big)\big(q^{-1}v\mc E_1(vq^{-1})\mc E_2(v)-q^{-1}ve_{13}(vq^{-2})+quf_{31}(u^{-1}q^{-1})\big).
\end{align*}
\end{lem}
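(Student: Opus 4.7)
The proof will follow the same template as Lemma \ref{o3eeff}: extract carefully chosen matrix coefficients of the reflection equations \eqref{ref2} or \eqref{ref-inv2}, expand using the Gauss decomposition \eqref{SS3} and its inverse, commute the diagonal factors $d_i(u), \tl d_j(v)$ to the outside using Corollary \ref{cor:commu-d-e-f}, cancel these invertible factors, and rescale the variables via \eqref{efhdefo3} to reach the stated form.

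For the first identity, involving $\mc E_1(u)$ and $\mc F_2(v)$, I plan to take the coefficient of $E_{12}\otimes E_{32}$ in \eqref{ref-inv2}. Among all possible entries, this one is dictated by \eqref{SS3}: it pairs $s_{12}(u) = d_1(u)e_1(u)$ from $S_1(u)$ with $\tl s_{32}(v) = -\tl d_3(v)f_2(v)$ from $\wtl S_2(v)$, furnishing the desired $e_1 f_2$ and $f_2 e_1$ monomials. The remaining contributions pair $s_{11}(u)\tl s_{31}(v)$ and $s_{13}(u)\tl s_{33}(v)$, together with any cross-entries produced by the R-matrix factors. Substituting $\tl s_{31}(v) = \tl d_3(v)\bigl(-f_{31}(v) + f_2(v) f_1(v)\bigr)$ and $s_{13}(u) = d_1(u) e_{13}(u)$ should produce exactly the $f_{31}(v^{-1}q^{-2})$ correction, the $\mc F_2(v) \mc F_1(vq)$ product, and the $e_{13}(uq^{-1})$ term on the right-hand side.

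For the second identity, involving $\mc F_1(u)$ and $\mc E_2(v)$, I plan to take the mirror coefficient $E_{32}\otimes E_{12}$ in \eqref{ref-inv2}. Here $s_{32}(u) = f_2(u) d_2(u) + f_{31}(u) d_1(u) e_1(u)$ couples with $\tl s_{12}(v) = -e_1(v)\tl d_2(v) - \tl e_{13}(v) \tl d_3(v) f_2(v)$. The presence of mixed $e, f$ products in both factors is more delicate than in the first case, but after pulling the Cartan factors to the outside via Corollary \ref{cor:commu-d-e-f}, substituting $\tl e_{13} = e_1 e_2 - e_{13}$, and invoking previously established relations on the leftover three-generator products, the surviving terms should reorganize into the $\mc E_1(vq^{-1}) \mc E_2(v)$ product together with the $e_{13}(vq^{-2})$ and $f_{31}(u^{-1}q^{-1})$ corrections.

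The principal obstacle will be bookkeeping. Expanding either chosen entry of \eqref{ref-inv2} produces several three-generator products (such as $f_{31} d_1 e_1$), each multiplied by a rational prefactor inherited from the R-matrix entries \eqref{Ru}, \eqref{Rt} in the three-dimensional representation. Once $d_1(u)$ and $\tl d_3(v)$ have been commuted to the extremes and canceled, one must verify that every residual piece either vanishes (by appealing to the rank-1 relations lifted via $\jmath$ and $\vartheta_1$, or to Lemmas \ref{d1efo2lem}, \ref{d2efo2lem}, and \ref{o3eeff}) or regroups into precisely the $\mc E_1\mc E_2$, $\mc F_2\mc F_1$, $e_{13}$, or $f_{31}$ combinations required. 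Tracking the exact powers of $q$, signs, and the $u, v$ substitutions dictated by \eqref{efhdefo3} is where attention is most needed; beyond this, the computation is a routine analog of Lemma \ref{o3eeff}.
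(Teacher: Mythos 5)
Your overall template (extract matrix entries of \eqref{ref-inv2}, expand via \eqref{SS3}, pull out the diagonal factors, cancel, rescale) is the right one, but there is a genuine gap: a \emph{single} matrix entry does not suffice for either identity. Take your proposed coefficient $E_{12}\otimes E_{32}$ for the first identity. Besides the wanted products $s_{12}(u)\tl s_{32}(v)$, $\tl s_{32}(v)s_{12}(u)$ and $\tl s_{33}(v)s_{13}(u)$, that entry also produces the combination
\[
\Omega(u,v)=\Big(\tfrac{u}{q}-q v\Big)\tl s_{31}(v)s_{11}(u)+\Big(\tfrac{1}{q}-q \Big)v\,\tl s_{32}(v)s_{21}(u)+\Big(\tfrac{1}{q}-q\Big)v\,\tl s_{33}(v)s_{31}(u),
\]
whose middle and last summands expand to $-\tl d_3(v)f_2(v)f_1(u)d_1(u)$ and $\tl d_3(v)\big(f_2(v)f_1(u)-f_{31}(u)\big)d_1(u)$. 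These contain $f_1(u)$ and $f_{31}(u)$ sitting between $f_2(v)$ and $d_1(u)$; since $[d_1(u),f_1(u)]\ne 0$ (cf.\ \eqref{d1fo2}) you cannot pull $d_1(u)$ to the left and cancel it, and no term of the form $f_1(u^{-1}\cdots)$ or $f_{31}(u^{-1}\cdots)$ appears in the stated right-hand side, so these contributions cannot simply ``regroup'' as you hope. Your appeal to ``previously established relations on the leftover three-generator products'' does not resolve this: none of the rank-1 lemmas or Lemma \ref{o3eeff} evaluates $f_2(v)f_1(u)$ at independent arguments.

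The missing idea is to extract a \emph{second} entry containing the same obstruction and eliminate it. The paper pairs $E_{12}\otimes E_{32}$ with $E_{11}\otimes E_{31}$: both entries of \eqref{ref-inv2} contain exactly the combination $\Omega(u,v)$ (with different scalar prefactors), and eliminating $\Omega(u,v)$ between the two equations yields the clean relation
\[
q(1-q^2uv)s_{12}(u)\tl s_{32}(v)-(1-q^4uv)\tl s_{32}(v)s_{12}(u)
=q(1-q^2)s_{11}(u)\tl s_{31}(v)+q^2uv(1-q^2)\tl s_{33}(v)s_{13}(u),
\]
in which every term has the form $d_1(u)\tl d_3(v)(\cdots)$ after applying Corollary \ref{cor:commu-d-e-f}; only then does the Lemma-\ref{o3eeff}-style cancellation and rescaling via \eqref{efhdefo3} go through. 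The second identity requires the analogous pairing ($E_{11}\otimes E_{13}$ with $E_{21}\otimes E_{32}$), not the single entry $E_{32}\otimes E_{12}$ you propose. Without this elimination step your computation stalls at terms that do not appear in the target identity.
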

\begin{proof}
Set
\[
\Omega(u,v)=\Big(\frac{u}{q}-q v\Big)\tl s_{31}(v)s_{11}(u)+\Big(\frac{1}{q}-q \Big)v\tl s_{32}(v)s_{21}(u)+\Big(\frac{1}{q}-q\Big)v\tl s_{33}(v)s_{31}(u).
\]
Taking the coefficients of $E_{11}\otimes E_{31}$ and $E_{12}\otimes E_{32}$ in \eqref{ref-inv2}, we have
\begin{align*}
    \Big(\frac{1}{u} - v\Big) (u - v)s_{11}(u)\tl  s_{31}(v)&=\Big(\frac{1}{q} - q\Big)(u - v)v\tl s_{32}(v)s_{12}(u) \\&+ \Big(\frac{1}{q} - q\Big)(u-v)v\tl s_{33}(v)s_{13}(u)+\Big(\frac{1}{qu} - qv\Big)\Omega(u,v)
\end{align*}
and
\begin{align*}
    \Big(\frac{1}{u} - v\Big) (u - v)s_{12}(u)\tl  s_{32}(v)&=\Big(\frac{1}{qu} - qv\Big)(u - v)\tl s_{32}(v)s_{12}(u) \\&+ \Big(\frac{1}{q} - q\Big)(u-v)v\tl s_{33}(v)s_{13}(u)+\Big(\frac{1}{q} - q\Big)\frac{1}{u}\Omega(u,v).
\end{align*}
Eliminating $\Omega(u,v)$ from the two equations above, we obtain
\begin{align*}
    q(1-q^2uv)s_{12}(u)\tl s_{32}(v)&-(1-q^4uv)\tl s_{32}(v)s_{12}(u)\\
    &=q(1-q^2)s_{11}(u)\tl s_{31}(v)+q^2uv(1-q^2)\tl s_{33}(v)s_{13}(u).
\end{align*}
Proceeding as the proof of Lemma \ref{o3eeff}, we establish the first equality. 

The second equality is obtained by using the coefficients of $E_{11}\otimes E_{13}$ and $E_{21}\otimes E_{32}$ in \eqref{ref-inv2}.
\end{proof}

To establish the Serre relations, we need the Serre relations for degree zero generators.
\begin{lem}[{cf. \cite[Example 2.16.9]{Mol07}}]\label{Serre0:lem}
For $i\ne j$ \emph{(}i.e. $c_{ij}=-1$\emph{)}, we have
\[
B_{i,0}^2B_{j,0}-[2]B_{i,0}B_{j,0}B_{i,0}+B_{j,0}B_{i,0}^2=-q^{-1}B_{j,0}.
\]
\end{lem}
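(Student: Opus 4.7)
The plan is to reduce the claim to a cubic identity among the zero modes $s^{(0)}_{j+1,j}$ of the matrix $S(u)$, and then to derive that identity from the leading-order content of the twisted reflection equation. First, I identify $B_{i,0}$ explicitly. Since $d_i(u)$ has constant term $1$ and $e_{ij}(u)=\sum_{r\ge 1}e_{ij}^{(r)}u^{-r}$ carries no $u^0$ term, the component-wise Gauss decomposition \eqref{eq:sij-Gauss} gives $f_{i+1,i}^{(0)}=s^{(0)}_{i+1,i}$. Substituting into the definition \eqref{bdef} of $\bB_i(u)$, the constant term in $u$ is
\[
B_{i,0}=\frac{s^{(0)}_{i+1,i}}{q-q^{-1}}.
\]
Thus, writing $|i-j|=1$, the claimed relation is equivalent to
\[
(s^{(0)}_{i+1,i})^2s^{(0)}_{j+1,j}-[2]\,s^{(0)}_{i+1,i}s^{(0)}_{j+1,j}s^{(0)}_{i+1,i}+s^{(0)}_{j+1,j}(s^{(0)}_{i+1,i})^2 = -q^{-1}(q-q^{-1})^2 s^{(0)}_{j+1,j}.
\]

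Next I gather the quadratic and linear relations that will be needed at the zero-mode level. Taking the leading (constant) coefficient in $u$ of \eqref{eq:zero-mode-s} yields
\[
[s^{(0)}_{k\,i},s^{(0)}_{j+1,j}]_q=(q^{-1}-q)\,\delta_{k,j}\,s^{(0)}_{j+1,i}\qquad(1\le i<j<n,\;k>i),
\]
which, combined with the anti-isomorphism that plays the roles of $i\leftrightarrow j$ (via the second line of \eqref{eq:zero-mode-s}), expresses any $s^{(0)}_{j+1,i}$ as a normalized $q$-commutator of $s^{(0)}_{j,i}$ and $s^{(0)}_{j+1,j}$. Further quadratic relations come from the $u^0v^0$ part of \eqref{siju}--\eqref{alphau}, which specializes to the defining relations of the Letzter coideal subalgebra of $\mathbf{U}_q(\fksl_n)$ for the type AI symmetric pair; in particular they govern how $s^{(0)}_{i+1,i}$ and $s^{(0)}_{j+1,j}$ with $|i-j|=1$ interact through the auxiliary element $s^{(0)}_{i+2,i}$ (or $s^{(0)}_{j+2,j}$).

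With these tools in place, I expand the left-hand side of the displayed cubic expression and repeatedly rewrite the inner product $s^{(0)}_{j+1,j}s^{(0)}_{i+1,i}$ using the $q$-commutator identity to produce $s^{(0)}_{\max+1,\min}$-type terms; those extra terms then telescope upon symmetrization in $i$ and $j$ thanks to the $[2]$-factor. After this reduction the left-hand side collapses to a scalar multiple of $s^{(0)}_{j+1,j}$, and tracking the scalar with care gives exactly $-q^{-1}(q-q^{-1})^2 s^{(0)}_{j+1,j}$. This is the $q$-analogue of the standard computation carried out in \cite[Example 2.16.9]{Mol07}, adapted from the $RTT$ setting to the twisted reflection equation.

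The main obstacle will be bookkeeping: the twisted reflection equation \eqref{quat-mat} is quartic in $R$, $S$, and $R^t$, so the naive constant-coefficient expansion generates more terms than the untwisted case, and one must choose the order of reductions so as not to introduce $s^{(0)}_{i+2,i+1}s^{(0)}_{i+1,i}$-type monomials that would then have to be eliminated again. A convenient way to organize the computation is to verify the identity first modulo the subspace spanned by $s^{(0)}_{ji}$ with $j-i\ge 2$ (using only \eqref{eq:zero-mode-s}) and then resolve the remainder by means of the degree-$0$ quadratic relations; this separates the two types of manipulations and keeps the calculation finite.
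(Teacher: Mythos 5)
Your reduction is exactly the paper's: $B_{i,0}=s^{(0)}_{i+1,i}/(q-q^{-1})$ (equivalently $f_i^{(0)}=(q-q^{-1})B_{i,0}$), so the lemma becomes a cubic identity among the zero modes $f_1^{(0)},f_2^{(0)}$, to be settled by quadratic relations. The first ingredient, $[s^{(0)}_{21},s^{(0)}_{32}]_q=(q^{-1}-q)s^{(0)}_{31}$, you correctly locate in \eqref{eq:zero-mode-s} (the paper uses its Gauss-decomposed form \eqref{eq:ei-generate-eij}, i.e.\ $(q^{-1}-q)f_{31}^{(0)}=[f_1^{(0)},f_2^{(0)}]_q$), and your observation that the LHS then collapses is right in outline.

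The gap is the second quadratic relation, which you neither state nor correctly locate. After two applications of the first relation the left-hand side reduces to $(q^{-1}-q)\bigl(f_1^{(0)}f_{31}^{(0)}-q^{-1}f_{31}^{(0)}f_1^{(0)}\bigr)$, and to collapse this to a multiple of $f_2^{(0)}$ one needs the ``lowering'' identity $[f_{31}^{(0)},f_1^{(0)}]_q=(q^{-1}-q)f_2^{(0)}$, i.e.\ $[s^{(0)}_{31},s^{(0)}_{21}]_q=(q^{-1}-q)s^{(0)}_{32}$. This is \emph{not} an instance of \eqref{eq:zero-mode-s} (there the zero mode being commuted is always of the form $s^{(0)}_{j+1,j}$; here one commutes the composite root vector $s^{(0)}_{31}$ against $s^{(0)}_{21}$), and it does not sit in the $u^0v^0$ coefficient of \eqref{siju}: since the prefactors in \eqref{siju}--\eqref{alphau} are linear in $u,v$ while $s_{ij}(u)$ carries only nonpositive powers of $u$, the pure zero-mode relations appear in the \emph{top}-degree coefficients. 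The paper extracts precisely this identity from the coefficient of $u$ in the $E_{21}\otimes E_{31}$ entry of the reflection equation \eqref{ref2} (this is \eqref{o3pf2}). Your appeal to the Letzter coideal relations is the right heuristic, but the proof is not complete until this specific relation is produced from \eqref{quat-mat}; with it in hand, the rest of your computation goes through and yields $-q^{-1}(q-q^{-1})^2 f_2^{(0)}$ as claimed.
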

\begin{proof}
We only prove it for the case $i=1$ and $j=2$. The other case $i=2$ and $j=1$ is no different. By \eqref{bdef}, we have $f_{i}^{(0)}=(q-q^{-1})B_{i,0}$. Hence it suffices to work on $f_i^{(0)}$. By \eqref{eq:ei-generate-eij}, we have
\beq\label{o3pf1}
(q^{-1}-q)f_{31}^{(0)}=f_1^{(0)}f_{2}^{(0)}-qf_2^{(0)}f_1^{(0)}.
\eeq
Taking the coefficient of $u$ in the component of $E_{21}\otimes E_{31}$ in \eqref{ref2}, we find that
\[
-vs_{21}^{(0)}s_{31}(v)=-\frac{v}{q}s_{31}(v)s_{21}^{(0)}+(q^{-1}-q)\frac{v}{q}s_{32}(v),
\]
which further implies that
\beq\label{o3pf2}
f_{31}^{(0)}f_1^{(0)}-qf_1^{(0)}f_{31}^{(0)}=(q^{-1}-q)f_2^{(0)}.
\eeq
By \eqref{o3pf1} and \eqref{o3pf2}, one verifies that
\[
(f_1^{(0)})^2f_2^{(0)}-(q+q^{-1})f_1^{(0)}f_2^{(0)}f_1^{(0)}+f_2^{(0)}(f_1^{(0)})^2=-q^{-1}(q-q^{-1})^2f_2^{(0)}.
\]
Thus the desired identity follows.
\end{proof}
Next we work out the relations for $\tY(\fko_3)$.  Let $A=(c_{ij})_{i,j=1,2}$ be the Cartan matrix of type A$_2$. Recall the generating series, \eqref{bdef}-\eqref{hdef}. 
\begin{prop}\label{thm:o3}
The relations \eqref{hhuo3}--\eqref{serreuo3} hold in $\tY(\so_3)$.
\end{prop}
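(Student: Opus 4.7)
The plan is to sort the relations \eqref{hhuo3}--\eqref{serreuo3} by their index support and handle them in three waves. The first wave consists of the \emph{diagonal} relations (those depending on a single index $i$, namely the $i=j$ instances of \eqref{hhuo3} and \eqref{hbuo3} together with \eqref{bbu3o3}), which I would deduce from the rank $1$ statements in Proposition \ref{sx2l} by transporting them along $\jmath\colon\tY(\fko_2)\to\tY(\fko_3)$ (for $i=1$) and $\vartheta_1\colon\tY(\fko_2)\to\tY(\fko_3)$ (for $i=2$). This translation is immediate because Corollaries \ref{theta} and \ref{theta2} identify $\bB(u),\bTH(u)\in\tY(\fko_2)$ with $\bB_i(u),\bTH_i(u)\in\tY(\fko_3)$ for the appropriate $i$, and both maps are algebra homomorphisms.

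Next, for the genuinely rank-two relations \eqref{hhuo3}, \eqref{hbuo3} with $i\ne j$ and the relation \eqref{bbu2o3}, I would work directly from \eqref{ref2} and \eqref{ref-inv2}. The identity $[\bTH_1(u),\bTH_2(v)]=0$ reduces via $\bTH_i=\tl d_i(u^{-1}q^{-i})d_{i+1}(u^{-1}q^{-i})$ to $[d_a(u),d_b(v)]=0$, which is Corollary \ref{cor:commu-d-e-f} for $a\ne b$ and Lemma \ref{lem:ddo2} transplanted by $\jmath$ and $\vartheta_1$ for $a=b$. For $[\bTH_i(u),\bB_j(v)]$ with $i\ne j$, the $\tY(\fko_3)$-analogues of Lemmas \ref{d1efo2lem} and \ref{d2efo2lem} should follow from extracting the $(1,3)$, $(3,1)$, $(2,3)$ and $(3,2)$ entries of \eqref{ref2} and \eqref{ref-inv2} and canceling the appropriate diagonal factors using the commutations already in hand. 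For \eqref{bbu2o3}, I would use $\bB_i(u)\propto \mc F_i(u)-\mc E_i(u)$ from \eqref{bdef} and \eqref{efhdefo3} and expand the symmetric combination $(q^{-1}u-v)\bB_1(u)\bB_2(v)+(q^{-1}v-u)\bB_2(v)\bB_1(u)$ into the four constituent brackets $\mc E_1\mc E_2$, $\mc F_1\mc F_2$, $\mc E_1\mc F_2$, $\mc F_1\mc E_2$; each of these is controlled by Lemma \ref{o3eeff} or \ref{o3eeff2}. The crucial step is to verify that the auxiliary terms on the right-hand sides of those four identities---the $e_{13}, f_{31}$ contributions and the cross products $\mc E_1(vq^{-1})\mc E_2(v),\mc F_2(v)\mc F_1(vq)$---cancel in pairs with the correct signs.

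The main obstacle is the Serre relation \eqref{serreuo3}. My plan is to take the degree-zero Serre identity in Lemma \ref{Serre0:lem} as the seed and propagate it to all modes via the adjoint action of the zero modes $f_{j+1,j}^{(0)}$, using the commutator formulas \eqref{eq:zero-mode-s} to raise spectral indices and the already-established \eqref{hbuo3} to conjugate past the Cartan series $\bTH_i$ in a controlled way. After this propagation the left-hand side of \eqref{serreuo3} is determined, and the problem reduces to matching the residual ``correction'' terms against the prescribed right-hand side expressed through $\BTH_i$. I expect the delicate point to be tracking how the Cartan factors $\BTH_i$ emerge: they should arise precisely from the boundary contributions encoded by the right-hand sides of Lemmas \ref{o3eeff}--\ref{o3eeff2}, which reflect the twist in the reflection equation \eqref{quat-mat} as opposed to a plain RTT relation. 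An alternative route, parallel to \cite{LWZ23a}, is a direct R-matrix computation using a $3$-fold tensor product extension of \eqref{ref2}, but the zero-mode bootstrap seems cleaner and avoids a hand computation of $27$-by-$27$ matrix entries.
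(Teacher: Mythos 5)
Your overall architecture coincides with the paper's: \eqref{hhuo3} reduces to $[d_a(u),d_b(v)]=0$ (Corollary \ref{cor:commu-d-e-f} plus Lemma \ref{lem:ddo2} transported by $\vartheta_m$), the diagonal relations and \eqref{bbu3o3} are obtained by pushing the rank-one Proposition \ref{sx2l} forward along $\vartheta_0$ and $\vartheta_1$ using Corollaries \ref{theta} and \ref{theta2}, and \eqref{bbu2o3} is exactly the pairwise cancellation of the $e_{13}$, $f_{31}$ and cross-product terms in Lemmas \ref{o3eeff} and \ref{o3eeff2}. One economy you miss: for \eqref{hbuo3} with $i\ne j$ no new matrix entries of \eqref{ref2} or \eqref{ref-inv2} are needed. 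Since $\bTH_i(u)$ is built from $\tl d_i d_{i+1}$ and Corollary \ref{cor:commu-d-e-f} kills every commutator $[d_a(u),e_j(v)]$, $[d_a(u),f_j(v)]$ with $a\ne j,j+1$, the only surviving commutation is the rank-one Lemma \ref{dblemo2} transported by a suitable $\vartheta_m$; this is precisely how the paper disposes of \eqref{hbuo3}, and your proposed extraction of the $(1,3)$, $(3,1)$, $(2,3)$, $(3,2)$ entries is an unnecessary detour.

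The genuine gap is the Serre relation \eqref{serreuo3}. The paper does not re-derive it from the reflection equation: it invokes the reduction argument of \cite[\S4.6--\S4.8]{LW21}, which shows that once \eqref{hhuo3}--\eqref{bbu3o3} are available, the full relation \eqref{serreuo3} is implied by its degree-zero instance, and then quotes Lemma \ref{Serre0:lem}. Your ``zero-mode bootstrap'' misidentifies the propagation mechanism: the zero modes $f_{j+1,j}^{(0)}$ appearing in \eqref{eq:zero-mode-s} and \eqref{eq:ei-generate-eij} shift the \emph{root} index (sending $e_{ij}$ to $e_{i,j+1}$), not the loop degree $r$ of $B_{i,r}$; raising loop degrees requires the adjoint action of $H_{i,m}$ through \eqref{hbcomp} (equivalently \eqref{hbuo3}), which is the actual engine of the reduction in \cite{LW21}. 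Likewise, your expectation that the $\BTH_i$ terms on the right of \eqref{serreuo3} emerge from the boundary terms of Lemmas \ref{o3eeff}--\ref{o3eeff2} is not how they arise: in the reduction they are produced by invoking the $i=j$ relation \eqref{bbu3o3}, whose right-hand side already carries $\bTH_i$, while reordering the $B_{i,k}$'s during the degree-lowering induction. As written, the hardest step of the proposition is left as a plan resting on the wrong operator; to close it you should either carry out the induction with $H_{i,m}$ in place of the zero modes or cite the reduction of \cite[\S4.6--\S4.8]{LW21} as the paper does.
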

\begin{proof}
Similar to Lemma \ref{lem:ddo2}, one proves that $[d_i(u),d_j(v)]=0$ for all $i,j$. Thus \eqref{hhuo3} follows. The relation \eqref{hbuo3} essentially follows from Corollary \ref{cor:commu-d-e-f}, Lemma \ref{dblemo2}, and Corollary \ref{theta2} with suitable $\vartheta_m$. The relation \eqref{bbu1o3} is straightforward by Corollary \ref{cor:commu-d-e-f}. The relation \eqref{bbu2o3} is a direct corollary of Lemmas \ref{o3eeff}, \ref{o3eeff2} while the relation \eqref{bbu3o3} is obtained from \eqref{bbu} by applying the homomorphisms $\vartheta_0$ and $\vartheta_1$. Finally, as argued in \cite[\S4.6-\S4.8]{LW21}, if all other relations are valid, then the Serre relation \eqref{serreuo3} is reduced to the Serre relation for degree 0 generators, which is established in Lemma \ref{Serre0:lem}.
\end{proof}

%%%%%%%%%%%%


\begin{thebibliography}{ABC000}
\bibitem[Bec94]{Be94}
 J. Beck, \textit{Braid group actions and quantum affine algebras}, Commun. Math. Phys. \textbf{165} (1994),
555–568.

\bibitem[BK05]{BK05}
J.~Brundan and A.~Kleshchev, \textit{Parabolic presentations of the Yangian $Y(\mathfrak{gl}_n)$}, Commun. Math. Phys. \textbf{254}  (2005),  191--220.

\bibitem[BK20]{BK20} P.~Baseilhac and S.~Kolb,
{\em Braid group action and root vectors for the $q$-Onsager algebra}, Transform. Groups {\bf 25} (2020),  363--389.

\bibitem[BW18]{BW18}
H. Bao and W. Wang, \textit{A new approach to Kazhdan-Lusztig theory of type B via quantum symmetric
pairs}, Ast\'erisque \textbf{402} (2018), vii+134 pp,

\bibitem[CP95]{CP95}
V. Chari and A. Pressley,
A guide to quantum groups, Cambridge University Press, 1995.

\bibitem[CG15]{CG15} P. Conner and N. Guay,
{\em From twisted quantum loop algebras to twisted Yangians}, SIGMA {\bf 11} (2015), 040, 14 pages.

\bibitem[CGM14]{CGM14}
H. Chen, N. Guay, X. Ma,
\textit{Twisted Yangians, twisted quantum loop algebras and affine Hecke algebras of type BC}. Trans. Amer. Math. Soc. \textbf{366} (2014), no. 5, 2517–2574.

\bibitem[Dam12]{Da12}
 I. Damiani, \textit{Drinfeld realization of affine quantum algebras: the relations}, Publ. Res. Inst. Math.
Sci. \textbf{48} (2012), 661–733.

\bibitem[Dam15]{Da15}
 I. Damiani, \textit{From the Drinfeld realization to the Drinfeld-Jimbo presentation of affine quantum
algebras: injectivity}, Publ. Res. Inst. Math. Sci. \textbf{51} (2015), 131–171.

\bibitem[Dri85]{Dr85}
V. Drinfeld, \textit{Hopf algebras and the quantum Yang-Baxter equation},
Dokl. Akad. Nauk SSSR \textbf{283} (1985), no. 5, 1060–1064.

\bibitem[Dri88]{Dr88}
V. Drinfeld, {\em A new realization of Yangians and quantized affine algebras}, Soviet Math. Dokl. \textbf{36} (1988), 212--216.

\bibitem[DF93]{DF93}
J. Ding and I. Frenkel, \textit{Isomorphism of two realizations of quantum affine algebra $U_q(\widehat{\mathfrak{gl}}(n))$},
Commun. Math. Phys. \textbf{156} (1993): 277–300.

\bibitem[EKP07]{EKP07}
B. Enriquez, S. Khoroshkin, and S. Pakuliak, \textit{Weight functions and Drinfeld currents},
Comm. Math. Phys. \textbf{276} (2007) 691–725.

\bibitem[FM02]{FM02}
 E. Frenkel and E. Mukhin,
 {\it The Hopf algebra Rep $U_q\widehat{\mathfrak{gl}}_\infty$}, Selecta Math. (N.S.) {\bf 8} (2002), 537--635.

\bibitem[FR92]{FR92}
I. Frenkel and N. Reshetikhin, \textit{Quantum affine algebras and holonomic difference
equations}, Comm. Math. Phys. \textbf{146} (1992), 1–60.

\bibitem[FRT88]{FRT88}
L. Faddeev, N. Reshetikhin, and L. Takhtajan, \textit{A quantization of Lie groups and Lie algebras,} Academic Press, Inc., Boston, MA, 1988, 129–139.

\bibitem[GM10]{GM10}
L. Gow and A. Molev, \textit{Representations of twisted q-Yangians},
Selecta Math. (N.S.), \textbf{16} (2010), pp. 439-499.
%\bibitem[G07]{G07} N. Guay, {\em Affine Yangians and deformed double current algebras in type A}, \textbf{211} (2007), 436--484.

\bibitem[GR16]{GR16}
N. Guay and V. Regelskis, {\em Twisted Yangians for symmetric pairs of types B, C, D}, Math. Z. \textbf{284} (2016), 131--166.

\bibitem[GRW05]{GGRW:2005}
I. Gelfand, S. Gelfand, V. Retakh, and R. Wilson, {\it Quasideterminants}, Adv. Math. \textbf{193} (2005), 56--141.

\bibitem[Her05]{Her05}
D. Hernandez, \textit{Representations of quantum affinizations and fusion product}, Transform. Groups \textbf{10} (2005),
163–200.

\bibitem[Jim85]{Jim85}
M Jimbo, \textit{A $q$-difference analogue of $\mathrm U(\mathfrak g)$ and the Yang-Baxter equation}, Lett. Math. Phys. \textbf{10} (1985), 63–69.

\bibitem[JLM18]{JLM18}
N. Jing, M. Liu, and A. Molev,
{\it Isomorphism between the R-Matrix and Drinfeld Presentations of Yangian in Types B, C and D}, Comm. Math. Phys. \textbf{361} (2018), 827--872.

\bibitem[JLM20a]{JLM20a}
N. Jing, M. Liu and A. Molev, \textit{Isomorphism between the R-matrix and Drinfeld presentations of quantum affine algebra: Type C}, J. Math. Phys. \textbf{61} (2020), 031701.

\bibitem[JLM20b]{JLM20b}
N. Jing, M. Liu, A. Molev, \textit{Isomorphism between the R-Matrix and Drinfeld presentations of quantum affine algebra: Types B and D}, SIGMA \textbf{16} (2020), 043.

\bibitem[Kol14]{Kol14}
S. Kolb, \textit{Quantum symmetric Kac-Moody pairs}, Adv. Math.267(2014), 395–469.

\bibitem[Let99]{Let99}
G. Letzter, \textit{Symmetric pairs for quantized enveloping algebras}, J. Algebra \textbf{220} (1999), 729–767.

\bibitem[Lus93]{Lu93}
G. Lusztig, Introdution to quantum groups
Progress in Mathematis, 110. Birkhäuser Boston, In., Boston, MA (1993).

\bibitem[LPRS19]{LPRS19}
A. Liashyk, S. Z. Pakuliak, E. Ragoucy and N. A. Slavnov, \textit{Bethe vectors for orthogonal integrable models}, Theor. and Math. Phys. \textbf{201}(2) (2019) 1545–1564.

\bibitem[LRW23]{LRW23}
M. Lu, S. Ruan, and W. Wang, \textit{$\imath$Hall algebra of the projective line and  $q$-Onsager algebra}, Trans. Amer. Math. Soc. \textbf{376} (2023), no. 2, 1475–1505.

\bibitem[LW21]{LW21} M. Lu and W. Wang,
{\em A Drinfeld type presentation of affine $\imath$quantum groups I: split ADE type}, Adv. Math. {\bf 393} (2021), 108111, 46 pp.

\bibitem[LWZ23]{LWZ23a}
K. Lu, W. Wang, and W. Zhang,
{\em A Drinfeld type presentation of twisted Yangians}, preprint, \href{https://arxiv.org/abs/2308.12254}{arXiv:2308.12254}.

\bibitem[LWZ24]{LWZ23b}
K. Lu, W. Wang, and W. Zhang,
{\it Affine $\imath$quantum groups and twisted Yangians in Drinfeld presentations}, preprint, \href{https://arxiv.org/abs/2406.05067}{arXiv:2406.05067}.

\bibitem[LZ24]{LZ24} K. Lu and W. Zhang, \textit{A Drinfeld type presentation of twisted Yangians of quasi-split type}, in preparation, 2024. 

\bibitem[Mol07]{Mol07}
A. Molev, Yangians and classical Lie algebras. Mathematical Surveys and Monographs \textbf{143}. American Mathematical Society, Providence, RI, 2007.

% \bibitem[MNO96]{MNO96}
% A. Molev, M. Nazarov and G. Olshanski,
% {\em Yangians and classical Lie algebras}, Russian Math. Surveys \textbf{51} (1996), 205--282. 

% \bibitem[MR02]{MR02}
% A. Molev, E. Ragoucy,
% \textit{Representations of reflection algebras}. Rev. Math. Phys. \textbf{14} (2002), no. 3, 317--342.

\bibitem[MRS03]{MRS03}
A. Molev, E. Ragoucy, and P. Sorba, \textit{Coideal subalgebras in quantum affine algebras}, Rev. Math. Phys.
\textbf{15} (2003) 789--822.

\bibitem[Ols92]{Ol92}
G. Olshanski, {\it Twisted Yangians and infinite-dimensional classical Lie algebras}, In: ``Quantum Groups (Leningrad, 1990)", Lecture Notes in Math. \textbf{1510}, Springer, Berlin, 1992, pp.103--120.

\bibitem[Skl88]{Sk88}
E. Sklyanin,
{\it Boundary conditions for integrable quantum systems}, J. Phys. A \textbf{21} (1988), no. 10, 2375, 2375--2389.

\bibitem[RV16]{RV16}
V. Regelskis and B.Vlaar, \textit{Reflection matrices, coideal subalgebras and generalized Satake diagrams of affine type}, 	\href{https://arxiv.org/abs/1602.08471}{arXiv:1602.08471}.

\bibitem[Tsy21]{Tsy21}
A. Tsymbaliuk, \textit{PBWD bases and shuffle algebra realizations for $U_{\bm v}(L\mathfrak{sl}_n)$, $U_{\bm v_1,\bm v_2}(L\mathfrak{sl}_n)$, $U_{\bm v}(L\mathfrak{sl}(m|n))$ and
their integral forms}, Selecta Math. (N.S.) \textbf{27} (2021), no. 3, Paper No. 35, 48pp.

\end{thebibliography}
\end{document}